\documentclass[10pt]{article}

\usepackage[letterpaper, hmargin=1in, top=1in, bottom=1.2in, footskip=0.6in]{geometry}

\usepackage{titlesec}
\titleformat{\subsection}[runin]{\normalfont\bfseries}{\thesubsection.}{.5em}{}[.]\titlespacing{\subsection}{0pt}{2ex plus .1ex minus .2ex}{.8em}
\titleformat{\subsubsection}[runin]{\normalfont\itshape}{\thesubsubsection.}{.3em}{}[.]\titlespacing{\subsubsection}{0pt}{1ex plus .1ex minus .2ex}{.5em}
\titleformat{\paragraph}[runin]{\normalfont\itshape}{\theparagraph.}{.3em}{}[.]\titlespacing{\paragraph}{0pt}{1ex plus .1ex minus .2ex}{.5em}


\usepackage{amsmath}
\usepackage{amssymb}
\usepackage{amsfonts}
\usepackage{latexsym}
\usepackage{amsthm}
\usepackage{amsxtra}
\usepackage{amscd}
\usepackage{bbm}
\usepackage{mathrsfs}
\usepackage{bm}
\usepackage{tensor}


\usepackage{graphicx, color}

\definecolor{darkred}{rgb}{0.9,0,0.3}
\definecolor{darkblue}{rgb}{0,0.3,0.9}

\newcommand{\nc}{\normalcolor}

\definecolor{vdarkred}{rgb}{0.7,0,0.2}
\definecolor{vdarkblue}{rgb}{0,0.2,0.7}
\usepackage[pdftex, colorlinks, linkcolor=vdarkblue,citecolor=vdarkred]{hyperref}


\usepackage[nottoc,notlof,notlot]{tocbibind}
\usepackage{cite}


\flushbottom
\numberwithin{equation}{section}
\numberwithin{figure}{section}

%


\theoremstyle{plain} 
\newtheorem{theorem}{Theorem}[section]
\newtheorem*{theorem*}{Theorem}
\newtheorem{lemma}[theorem]{Lemma}
\newtheorem*{lemma*}{Lemma}
\newtheorem{corollary}[theorem]{Corollary}
\newtheorem*{corollary*}{Corollary}
\newtheorem{proposition}[theorem]{Proposition}
\newtheorem*{proposition*}{Proposition}

\newtheorem*{conjecture*}{Conjecture}

\theoremstyle{definition} 
\newtheorem{definition}[theorem]{Definition}
\newtheorem*{definition*}{Definition}

\newtheorem*{example*}{Example}
\newtheorem{remark}[theorem]{Remark}
\newtheorem*{remark*}{Remark}

\newtheorem*{assumption*}{Assumption}


\renewcommand{\b}[1]{\boldsymbol{\mathrm{#1}}} 
\newcommand{\bb}{\mathbb} 
\renewcommand{\cal}{\mathcal}

\newcommand{\ul}[1]{\underline{#1} \!\,} 
\newcommand{\ol}[1]{\overline{#1} \!\,} 

\newcommand{\txt}[1]{\text{\rm{#1}}}


\newcommand{\E}{\mathbb{E}}

\newcommand{\C}{\mathbb{C}}
\newcommand{\N}{\mathbb{N}}


\newcommand{\me}{\mathrm{e}}
\newcommand{\ii}{\mathrm{i}}
\newcommand{\dd}{\mathrm{d}}
\newcommand{\col}{\mathrel{\vcenter{\baselineskip0.75ex \lineskiplimit0pt \hbox{.}\hbox{.}}}}
\newcommand*{\deq}{\mathrel{\vcenter{\baselineskip0.65ex \lineskiplimit0pt \hbox{.}\hbox{.}}}=}
\newcommand*{\eqd}{=\mathrel{\vcenter{\baselineskip0.65ex \lineskiplimit0pt \hbox{.}\hbox{.}}}}

\renewcommand{\leq}{\leqslant}
\renewcommand{\le}{\leqslant}
\renewcommand{\geq}{\geqslant}
\renewcommand{\ge}{\geqslant}
\renewcommand{\epsilon}{\varepsilon}


\newcommand{\p}[1]{({#1})}
\newcommand{\pb}[1]{\bigl({#1}\bigr)}

\newcommand{\pbb}[1]{\biggl({#1}\biggr)}

\newcommand{\abs}[1]{\lvert #1 \rvert}


\DeclareMathOperator{\tr}{Tr}

\DeclareMathOperator{\supp}{supp}
\DeclareMathOperator{\re}{Re}
\DeclareMathOperator{\im}{Im}


\newcommand{\bC}{ {\mathbb C} }
\newcommand{\bN}{ {\mathbb N} }
\newcommand{\bE}{ {\mathbb E} }

\newcommand{\bP}{ {\mathbb P} }
\newcommand{\bR}{ {\mathbb R} }

\newcommand*{\rom}[1]{\expandafter\@slowromancap\romannumeral #1@}


\title{Mesoscopic linear statistics of Wigner matrices of mixed symmetry class}
\author{Yukun He\footnote{University of Z\"{u}rich, Institute of Mathematics. Email: {\tt yukun.he@math.uzh.ch}.} }

\begin{document}
\maketitle

\begin{abstract}
We prove a central limit theorem for the mesoscopic linear statistics of $N\times N$ Wigner matrices $H$ satisfying $\mathbb E|H_{ij}|^2=1/N$ and $\mathbb E H_{ij}^2= \sigma /N$, where $\sigma \in [-1,1]$.  We show that on all mesoscopic scales $\eta$ ($1/N \ll \eta \ll 1$), the linear statistics of $H$ have a sharp transition at $1-\sigma \sim \eta$. As an application, we identify the mesoscopic linear statistics of Dyson's Brownian motion $H_t$ started from a real symmetric Wigner matrix $H_0$ at any nonnegative time $t \in [0,\infty]$. In particular, we obtain the transition from the central limit theorem for GOE to the one for GUE at time $t \sim \eta$.
\end{abstract}

\section{Introduction} \label{sec:1}
	
In this paper we consider the following class of Wigner matrices.
\begin{definition}[Wigner matrix] \label{def:Wigner}
	A \emph{Wigner matrix} is a Hermitian $N\times N$ matrix $H = H^* \in \C^{N \times N}$ whose entries $H_{ij}$ satisfy the following conditions.
	\begin{enumerate}
		\item
		The upper-triangular entries $(H_{ij} \col 1\le i\le j\le N)$ are independent.
		\item
		We have $\E H_{ij}=0$ for all $i,j$, $\E |\sqrt{N} H_{ij}|^2=1$ and $\bb E(\sqrt{N}H_{ij})^2=\sigma\in [-1,1]$ for $i \ne j$.
		\item
		There exists constants $c,C > 0$ such that $\E |\sqrt{N} H_{ij}|^{4+c-2\delta_{ij}} \le C$ for all $i,j$.
	\end{enumerate}
\end{definition}
We use the normalization $\bb E |\sqrt{N}H_{ij}|^2=1$, so that as $N \to \infty$ the spectrum of $H$ converges to the interval $[-2,2]$, and therefore its typical eigenvalue spacing is of order $N^{-1}$. We would like to study the mesoscopic linear eigenvalue statistics of $H$ in the form
\begin{equation} \label{zhat}
\hat{Z}(f)\deq \tr f\bigg(\frac{H-E}{\eta}\bigg)- N \int_{-2}^{2} \varrho(x) f\bigg(\frac{x-E}{\eta}\bigg) \mathrm{d}x\,,
\end{equation}
where $\varrho$ is the semicircle density, $f$ is a test function, $E \in (-2,2)$, and $\eta$ lies in the \emph{mesoscopic regime} $N^{-1} \ll \eta \ll 1$.

The study of mesoscopic linear statistics (MLS) was initiated in \cite{Kho1}, where the authors proved the case of Gaussian $H$  (Gaussian Orthogonal Ensemble). In \cite{Kho2}, the result was extended to Wigner matrices for $N^{-1/8} \ll \eta \ll 1$, and the scale was later pushed to $N^{-1/3}\ll \eta \ll 1$ in \cite{LS15}.  More recently in \cite{HK}, the result was pushed to all mesoscopic scales $N^{-1}\ll \eta \ll 1$. It was proved that for a Wigner matrix $H$ satisfying $\bb E (\sqrt{N}H_{ij})^2=1$, $\hat{Z}(f)$ converges in distribution to a Gaussian random variable with mean 0 and variance
\begin{equation} \label{intro_covariance}
V(f,f)\deq \frac{1}{2 \pi^2} \int \pbb{\frac{f(x) - f(y)}{x - y}}^2 \, \dd x \, \dd y
\end{equation}
as $N \to \infty$. The authors also showed in \cite{HK} that for a Wigner matrix $H$ with $\bb E (\sqrt{N}H_{ij})^2=0$,
\begin{equation}
\hat{Z}(f) \overset{d}{\longrightarrow}  \cal N\Big(0, \frac{1}{2}V(f,f)\Big)
\end{equation}
as $N \to \infty$. The results in \cite{HK} hold for general test functions $f$ subject to mild regularity and decay conditions. Very recently in \cite{LS18}, the result was extended to a class of test functions that do not vanish at infinity. In this paper, we further extend \cite{HK} by dropping the assumption $\bb E (\sqrt{N}H_{ij})^2 =0$ or $1$, and consider the general case where $\bb E (\sqrt{N}H_{ij})^2=\sigma \in [-1,1]$. As $\sigma$ goes from 0 to 1, we expect from \cite{HK} that the behaviour of $\hat{Z}(f)$ will transit from Gaussian Unitary Ensemble (GUE) statistics to Gaussian Orthogonal Ensemble (GOE) statistics. Our main result Theorem \ref{mainthm2} below shows that the transition happens sharply at $1-\sigma \sim \eta$, namely we have GUE statistics when $1-\sigma  \gg \eta$, and we have GOE statistics when $1-\sigma \ll \eta$.

One concrete example of the above model is the Dyson's Brownian motion
\begin{equation}
\dd \lambda_i=\sqrt{\frac{2}{N\beta}} \dd B_i -\lambda_i \dd t+\frac{1}{N}\sum_{j \ne i} \frac{\dd t}{\lambda_i-\lambda_j}
\end{equation}
for $i=1,2...,N$, where $\beta = 1,2$ and $B_i$ are independent standard Brownian motions. When $(\lambda_i(0))_{i=1}^N$ are the eigenvalues of a Hermitian matrix $H_0$, $(\lambda_i(t))_{i=1}^N$ have the same distribution as the eigenvalues of 
\begin{equation} \label{T1.5}
H_t=\sqrt{\me^{-t}} H_{0}+\sqrt{1-\me^{-t}}V\,,
\end{equation}
where $V$ is a Gaussian Wigner matrix of symmetry class $\beta$ (i.e.\ GOE for $\beta = 1$ and GUE for $\beta = 2$) independent of $H_0$. We are interested in the MLS of $H_t$ in the form
\begin{equation} \label{zhat'}
Y_t(f)\deq \tr f\bigg(\frac{H_t-E}{\eta}\bigg)- N \int_{-2}^{2} \varrho(x) f\bigg(\frac{x-E}{\eta}\bigg) \mathrm{d}x\,.
\end{equation}
Let $H_0$ be a real symmetric Wigner matrix and $\beta=2$, we see that $H_t$ is a Wigner matrix satisfying $\bb E |\sqrt{N}(H_t)_{ij}|^2=1$ and $\bb E (\sqrt{N}(H_t)_{ij})^2=\me^{-t}$. Thus Theorem \ref{mainthm2} shows $Y_t(f)$ transits from GOE statistics to GUE statistics when $1-\me^{-t} \sim \eta$, which is $t \sim \eta$. On the other hand, one can also choose $\beta=1$ and set $H_0$ to be a complex Hermitian Wigner matrix satisfying $\bb E  (\sqrt{N}(H_0)_{ij})^2=0$. In this case the transition from GUE statistics to GOE statistics happens at a much later time $t$, which satisfies $t+\log \eta \sim 1$. See also Remark \ref{Trmk2.3} and Corollary \ref{TCo3} below.

 The MLS for Dyson's Brownian motion was also considered \cite{JD13}, where the authors choose $\beta=2$, and let $H_0$ be either deterministic or diagonal with i.i.d.\,entries. In this case, the authors showed the transition happens at $t \sim \eta$, and they also identified the limiting behaviour for $t \in [\eta,\infty]$.   Comparing to \cite{JD13}, the model in our current paper treats Dyson's Brownian motion with random initial conditions corresponding to the eigenvalue distribution of a Wigner matrix, and with a short proof, we are able to identify the limiting behaviour for all $t \in [0,\infty]$. See also Remark \ref{Trmk2.5} below. 
 
Aside from the above articles, MLS had also been studied for invariant ensembles in \cite{BD, BEYY14,L16,BL16,LSY}, and for band matrices in \cite{EK3,EK4}.\

Comparing to \cite{HK}, the main novelty of our proof is to use the cumulant expansion formula on the imaginary part of the random variable. More precisely, instead of a standard cumulant formula that expands $\bb E [f(h,\bar{h})h]$, we expand the expression $\bb E [f(h,\bar{h})(h-\bar{h})]$ (see Lemma \ref{Tlem5.1} below). This provides the key cancellation needed to deal with the extra terms in the mixed symmetry class case. More details can be found in Section \ref{Tsec3} below.

In addition, our proof also crucially relies on the standard cumulant expansion formula (see Lemma \ref{lem:3.1} below). \nc Historically, the discovery of cumulant expansion formula goes back to the work of Barbour \cite{Barbour}, and it was first applied to random matrix theory in \cite{KKP}. Later on, cumulant expansion was also used in \cite{Kho1,Kho2,LP}. Subsequently, many problems of random matrix theory on mesoscopic scales, including the local law for Wigner matrices, were addressed using Schur's complement formula. Recently, with a more careful treatment, the cumulant method was revived in \cite{HK} as the main tool to prove the MLS of Wigner matrices, and it was later used in other works (e.g. \cite{HKR,EKS,LS1}) to prove various of results which seemed too complicated using Schur's formula. The method appears to be simple and versatile, and it has the potential to continue generating more results which were difficult to prove in the past.

To pass from Green functions to general test functions, we use an identity in complex analysis, Lemma \ref{lem5.1}, which is different from the usual Helffer-Sj\"{o}strand formula in \cite{Davies}. This avoids the usual use of cut-off functions and gives a cleaner and shorter proof. To further simplify the proof, we impose a stronger condition on the regularity of test functions, but the theorem holds for more general functions; see also Remark \ref{rmk5.2}.

The paper is organized as follows. In Section \ref{sec:2} we give the precise statements of our results, and in Section \ref{Tsec3} sketch the outline of the proof. In Section \ref{sec2.5} we gather the necessary tools for our proof. In Section \ref{sec4} we specialize Theorem \ref{mainthm2} to simplify the proof, assuming we have Lemma \ref{Tlem4.5}. Finally in Section \ref{Tsec5} we put our algebraic twist of the cumulant formula, Lemma \ref{Tlem5.1} into use, and give the proof of Lemma \ref{Tlem4.5}.

\subsection*{Conventions}
We regard $N$ as our fundamental large parameter. Any quantities that are not explicitly constant or fixed may depend on $N$; we almost always omit the argument $N$ from our notation.

\subsection*{Acknowledgements}

The author is grateful to Antti Knowles for suggesting this topic and giving various helpful comments. The author is partially supported by the Swiss National Science Foundation and the European Research Council.

\section{Results} \label{sec:2}
For fixed $r, s>0$, denote by $C^{1,r,s}(\bR)$ the space of all real-valued $C^1$-functions $f$ such that $f'$ is $r$-H\"{o}lder continuous uniformly in $x$, and $|f(x)| + |f^{\prime}(x)|=O(( 1 + |x|)^{-1-s})$. For $\mu \in [0,\infty)$, we define the quantity
\begin{equation} \label{variance}
V_{\mu}(f,g)\deq \frac{1}{4\pi^2}\int (f(x)-f(y))(g(x)-g(y))\bigg(\frac{1}{(x-y)^2}+\frac{(x-y)^2-\mu^2}{\big((x-y)^2+\mu^2\big)^2}\bigg)\, \dd x\,\dd y
\end{equation}
for all $f,g \in C^{1,r,s}(\bb R)$, and $V_{\infty}(f,g)\deq \lim\limits_{\mu \to \infty} V_{\mu}(f,g)$. We use $(Z_{\mu}(f))_{f \in C^{1,r,s}(\bR)}$ to denote the real-valued Gaussian process with mean zero and covariance 
	\begin{equation} \label{2.13}
	\bE(Z_{\mu}(f){Z_{\mu}(g)})=V_{\mu}(f,g)
	\end{equation} 
	for all $f,g\in  C^{1,r,s}(\bR)$.
	
	For $x \in \bR$, the Wigner semicircle law $\varrho$ is defined by 
	\begin{equation} \label{2.5}
	\varrho(x)\deq \frac{1}{2\pi}\sqrt{(4-x^2)_{+}}\,.
	\end{equation}
	Our main result is the weak convergence of the
	process $\hat{Z}(f)$ defined in \eqref{zhat} for $f \in C^{1,r,s}(\bR)$. We may now state our main result.
	
	\begin{theorem}
	 \label{mainthm2}
	 	Fix $\tau >0$, and let $\alpha \in [\tau,1-\tau]$, $\eta\deq N^{-\alpha}$, and $E \in [-2+\tau,2-\tau]$. Suppose
	 	\begin{equation} \label{T2.5}
	 	\frac{\sqrt{4-E^2}(1-\sigma)}{\eta} \longrightarrow \mu_*
	 	\end{equation}
	 	as $N \to \infty$, then the process $(\hat Z(f))_{f \in C^{1,r,s}(\bR)}$
	 converges in the sense of finite-dimensional distributions to $(Z_{\mu_*}(f))_{f \in C^{1,r,s}(\bR)}$ as $N \to \infty$. That is, for any fixed $p$ and $f_1,f_2,\dots,f_p \in C^{1,r,s}(\bR)$, we have
	\begin{equation}  \label{2.14}
	(\hat{Z}(f_1),\dots,\hat{Z}(f_p)) \overset{d}{\longrightarrow} (Z_{\mu_*}(f_1),\dots,Z_{\mu_*}(f_p))
	\end{equation}
	as $N \to \infty$.
		\end{theorem}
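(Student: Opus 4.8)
The plan is to adopt the resolvent strategy of \cite{HK}. First one proves a multivariate central limit theorem for traces of Green functions $G(z) \deq (H-z)^{-1}$ at spectral parameters in the mesoscopic window, and then transfers it to arbitrary $f \in C^{1,r,s}(\bR)$ via the complex-analytic identity of Lemma \ref{lem5.1}. Writing $m$ for the Stieltjes transform of $\varrho$, that identity lets one express $\hat Z(f)$ in \eqref{zhat} as an integral of $\tr G(E + \eta\zeta) - N m(E + \eta\zeta)$ against a smooth kernel built from $f$, the relevant spectral parameters having $\im\zeta$ of order $1$; the local semicircle law recorded in Section \ref{sec2.5} controls this integrand uniformly down to scales $\eta \gg N^{-1}$. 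Hence \eqref{2.14} follows once one knows the joint limiting law of $\big(\tr G(z_k) - \E\tr G(z_k)\big)_k$ for $z_k = E + \eta\zeta_k$, together with the fact that the deterministic correction $\E\tr G(z) - N m(z)$ is $O(1)$ and therefore contributes only $O(\eta)$ to $\E\hat Z(f)$ after integration against the scale-$\eta$ kernel, so that $\E\hat Z(f) \to 0$. The reduction in Section \ref{sec4} shows that it is enough to establish the single quantitative statement Lemma \ref{Tlem4.5}.

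The core is the evaluation of the joint cumulants of $\tr G(z_1), \dots, \tr G(z_p)$, which I would carry out by applying the cumulant expansion formula, Lemma \ref{lem:3.1}, to one distinguished resolvent trace. Differentiating $\tr G$ in the matrix entries brings in entries of $G$ and $G^2$, and resumming with the local law produces a self-consistent, Wick-type recursion for the joint moments whose solution is Gaussian. The new phenomenon in the mixed symmetry class is that the degree-two part of the expansion splits into two structurally different contributions: one weighted by $\E|\sqrt N H_{ij}|^2 = 1$, which is exactly as in the GUE computation of \cite{HK}, and one weighted by $\E(\sqrt N H_{ij})^2 = \sigma$, which carries a transposed resolvent factor $G(z)^{\top} = (\bar H - z)^{-1}$. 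To tame the latter I would apply the cumulant expansion not to $\E[\,\cdot\, H_{ij}]$ but to $\E[\,\cdot\,(H_{ij} - \bar H_{ij})]$, i.e.\ to the imaginary part (Lemma \ref{Tlem5.1}); since $\var(\im\sqrt N H_{ij}) = (1-\sigma)/2$, this makes the small parameter $1 - \sigma$ enter by itself and furnishes precisely the cancellation that forces the $\sigma$-term to survive only in the combination $(1-\sigma)/\eta$. Propagating this combination through the recursion, the deterministic factors that appear are products of $m$ and $m'$ evaluated at the $z_k$ and their conjugates $\bar z_k$; on mesoscopic scales these localize to kernels depending only on the rescaled differences $\zeta_j - \zeta_k$ and on
\[
\mu_* = \lim_{N\to\infty} \frac{\sqrt{4 - E^2}\,(1-\sigma)}{\eta},
\]
the extra kernel in \eqref{variance} arising as $\re(x - y + \ii\mu_*)^{-2} = \big((x-y)^2 - \mu_*^2\big)\big((x-y)^2 + \mu_*^2\big)^{-2}$.

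Gaussianity of the finite-dimensional distributions then follows along the usual lines: every joint cumulant of $(\tr G(z_k))_k$ of order $\geq 3$ is shown to be of lower order, again by Lemma \ref{lem:3.1}, the fourth-moment bound in Definition \ref{def:Wigner}(iii), and the local law, so the limit is determined by the two-point function just computed. Pushing this covariance back through Lemma \ref{lem5.1} turns it into the double integral $V_{\mu_*}(f,g)$ of \eqref{variance}: when $1-\sigma \ll \eta$ one has $\mu_* = 0$ and recovers the GOE variance $V_0(f,f) = V(f,f)$ of \eqref{intro_covariance}, and when $1-\sigma \gg \eta$ one has $\mu_* = \infty$ and, by the definition $V_\infty = \lim_{\mu\to\infty} V_\mu$, recovers the GUE variance $\tfrac12 V(f,f)$; intermediate $\mu_* \in (0,\infty)$ interpolates, giving the sharp transition at $1 - \sigma \sim \eta$.

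I expect the principal obstacle to be the bookkeeping of the $\sigma$-terms after the twisted expansion: one must check that every contribution carrying the extra factor $\sigma$ is either negligible on mesoscopic scales or pairs with a GUE-type contribution to reproduce exactly the $\mu_*$-kernel, with no residual error that fails to vanish when $1-\sigma \ll \eta$ or diverges when $1-\sigma \gg \eta$. This demands estimates, uniform as $z$ approaches the real axis at rate $\eta$ and simultaneously $\sigma \to 1$, on sums such as $\sum_{ij} (G^2)_{ij} G_{ij}$ and on quantities involving the transposed resolvent $(\bar H - z)^{-1}$; handling these two limits together, rather than in the two separate regimes treated in \cite{HK}, is the main new difficulty, and it is exactly what the algebraic twist of Lemma \ref{Tlem5.1} is designed to overcome.
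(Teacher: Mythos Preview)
Your proposal is correct and follows essentially the same approach as the paper: reduce to Green functions via Lemma \ref{lem5.1}, derive a Wick-type recursion for the centred moments of $\tr G$ through the resolvent identity and the cumulant expansion Lemma \ref{lem:3.1}, and treat the $\sigma$-weighted terms carrying $\ol G$ (equivalently $G^{\intercal}$) by the twisted expansion Lemma \ref{Tlem5.1}, so that the extracted factor $1-\sigma$ cancels the instability $(1-\sigma+\eta)^{-1}$ of the self-consistent equation and only the combination $\mu=(1-\sigma)\sqrt{4-E^2}/\eta$ survives in the limiting covariance. The one imprecision is the claim $\E\tr G(z)-Nm(z)=O(1)$ on the contour $|\im z|\asymp\eta$: Lemma \ref{Tlem3.9} only gives $O_\prec(N^{\alpha-\chi})$, but this is harmless since integration against $\tilde f_\eta$ still yields $\E\hat Z(f)=O_\prec(N^{-\chi/2})$, exactly as in Section \ref{sec4.4}.
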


\begin{remark}
The four-moment condition (iii) in Definition \ref{def:Wigner} is optimal. In fact, for $\bb E |\sqrt{N}H_{ij}|^4\asymp N^{\epsilon}$, $\hat{Z}(f)$ is expected to fluctuate on the scale $1+ N^{\epsilon/2}\eta$, and Theorem \ref{mainthm2} becomes invalid when $\alpha<\epsilon/2$.

Our regularity condition $f \in C^{1,r,s}(\bb R)$ is not optimal; in fact, Theorem \ref{mainthm2} is expected to be true for all $f \in H^{1/2}(\bb R)$. The reason for our assumption is the use of Lemma \ref{lem5.1} (or Helffer-Sj\"{o}strand formula), which requires the function to be at least $C^{1}$. Improvements on the regularity is possible: in \cite{SoWo}, the authors used
Littlewood-Paley type decomposition show the macroscopic CLT for Wigner matrices with test functions less regular than Lipschitz functions. We do not peruse it here.

\end{remark}
\begin{remark} \label{Trmk2.3}		
	Note that $E \in [-2+\tau,2-\tau]$ implies $\sqrt{4-E^2}\sim 1$. Thus when $1-\sigma \ll \eta$, we get $\mu_*=0$, and 
	\begin{equation*}
	V_0(f,g)=\frac{1}{2\pi^2}\int \frac{(f(x)-f(y))(g(x)-g(y))}{(x-y)^2}\,\dd x\,\dd y\,;
	\end{equation*}
	when $1-\sigma\gg\eta$, we get $\mu_*=\infty$, and $V_{\infty}(f,g)=V_0(f,g)/2$. This implies the transition happens when $1-\sigma \sim \eta=N^{-\alpha}$. By embedding this result to the Dyson's Brownian motion \eqref{T1.5} and using $\eta=N^{-\alpha} \ll 1$, we easily obtain Corollary \ref{TCo3} below.
	\end{remark}

\begin{corollary} \label{TCo3}
	Let $\eta, E$ be as in Theorem \ref{mainthm2}, $H_t$ be as in \eqref{T1.5}, and $f \in C^{1,r,s}(\bR)$.
	\begin{enumerate}
		\item  When $\beta=2$ and $H_0$ is a real Symmetric Wigner matrix. Suppose
		\begin{equation} \label{TC}
		t\eta^{-1}\sqrt{4-E^2} \longrightarrow \mu_*
		\end{equation}
		as $N \to \infty$, then
		\begin{equation*}
		Y_t(f) \overset{d}{\longrightarrow} Z_{\mu_*}(f)
		\end{equation*}
		as $N \to \infty$.
		\item When $\beta=1$ and $H_0$ is a complex Hermitian Wigner matrix satisfying $\bb E  (\sqrt{N}(H_0)_{ij})^2=0$. Suppose 
		\begin{equation} \label{T2.7}
		t+\log \eta-\log \sqrt{4-E^2} \to -\log \mu_*
		\end{equation}
		as $N \to \infty$, then
		\begin{equation*}
		Y_t(f) \overset{d}{\longrightarrow} Z_{\mu_*}(f)
		\end{equation*}
		as $N \to \infty$.
	\end{enumerate} 
\end{corollary}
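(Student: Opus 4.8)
The plan is to recognize the matrix $H_t$ from \eqref{T1.5} as a Wigner matrix in the sense of Definition \ref{def:Wigner} for an appropriate value of $\sigma$, and then invoke Theorem \ref{mainthm2} directly. Note that $Y_t(f)$ in \eqref{zhat'} is exactly $\hat Z(f)$ from \eqref{zhat} evaluated at $H = H_t$, so once $H_t$ is placed in the class of Definition \ref{def:Wigner} the conclusion $Y_t(f)\todist Z_{\mu_*}(f)$ (and its $p$-dimensional version) is just \eqref{2.14}. Thus the only work is (a) to check the axioms of Definition \ref{def:Wigner} for $H_t$ and to identify $\sigma$, and (b) to match the scaling hypotheses \eqref{TC} and \eqref{T2.7} against \eqref{T2.5}.

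For step (a): independence of the upper-triangular entries of $H_t$ and the vanishing of their means are inherited from $H_0$ and $V$, which are independent of one another. For the second moments, in case (i) the real symmetry of $H_0$ gives $\E(\sqrt N(H_0)_{ij})^2 = \E|\sqrt N(H_0)_{ij}|^2 = 1$ while $V$ being GUE gives $\E(\sqrt N V_{ij})^2 = 0$, so from \eqref{T1.5} we get $\E|\sqrt N(H_t)_{ij}|^2 = \me^{-t} + (1-\me^{-t}) = 1$ and $\E(\sqrt N(H_t)_{ij})^2 = \me^{-t}$, i.e.\ $\sigma = \me^{-t}$ and $1-\sigma = 1-\me^{-t}$; in case (ii) we have $\E(\sqrt N(H_0)_{ij})^2 = 0$ and, for GOE, $\E(\sqrt N V_{ij})^2 = 1$, so $\E|\sqrt N(H_t)_{ij}|^2 = 1$ and $\E(\sqrt N(H_t)_{ij})^2 = 1-\me^{-t}$, i.e.\ $\sigma = 1-\me^{-t}$ and $1-\sigma = \me^{-t}$. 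For the four-moment bound (iii), since $H_0$ satisfies it with some $c_0 > 0$ and $V$ is Gaussian (all moments finite), Minkowski's inequality in $L^{4+c_0}$ (resp.\ $L^{2+c_0}$ for the diagonal) gives $\norm{\sqrt N(H_t)_{ij}}_{4+c_0}\le \sqrt{\me^{-t}}\,\norm{\sqrt N(H_0)_{ij}}_{4+c_0} + \sqrt{1-\me^{-t}}\,\norm{\sqrt N V_{ij}}_{4+c_0}\le C$ uniformly in $N,i,j$. Hence $H_t$ is a Wigner matrix with $\sigma$ as above.

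For step (b): in case (i), Theorem \ref{mainthm2} requires $\sqrt{4-E^2}(1-\me^{-t})/\eta\to\mu_*$, and since $\eta = N^{-\alpha}\to0$ and $\sqrt{4-E^2}\sim1$ one checks this is equivalent to \eqref{TC} with the same $\mu_*$: if $t\to0$ then $1-\me^{-t} = t(1+o(1))$, while if $t$ stays bounded away from $0$ along any subsequence then both $t/\eta$ and $(1-\me^{-t})/\eta$ diverge there, forcing $\mu_* = \infty$ in both formulations. In case (ii), Theorem \ref{mainthm2} requires $\sqrt{4-E^2}\,\me^{-t}/\eta\to\mu_*$, which upon taking logarithms is precisely \eqref{T2.7} (the degenerate cases $\mu_*\in\{0,\infty\}$ being read off directly). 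In either case Theorem \ref{mainthm2} applies and yields the stated convergence. I do not expect a genuine obstacle: the corollary is a specialization of Theorem \ref{mainthm2}, and the only points needing a little care are the elementary asymptotics $1-\me^{-t}\sim t$ as $t\to0$ (and the behaviour of \eqref{TC} when $t$ does not go to $0$), which make \eqref{TC} the correct rephrasing of \eqref{T2.5} in case (i), together with the routine verification that the Gaussian perturbation in \eqref{T1.5} preserves the uniform four-moment bound.
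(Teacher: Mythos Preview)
Your proposal is correct and follows exactly the paper's approach: the paper derives the corollary in Remark~\ref{Trmk2.3} by observing that $H_t$ is a Wigner matrix with $\sigma=\me^{-t}$ (case (i)) or $\sigma=1-\me^{-t}$ (case (ii)) and then invoking Theorem~\ref{mainthm2} together with $\eta\ll 1$. Your write-up is simply a more explicit version of this, including the routine checks (Minkowski for condition~(iii), the asymptotic $1-\me^{-t}\sim t$ as $t\to0$, and the handling of the case where $t$ does not tend to~$0$) that the paper leaves implicit.
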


\begin{remark} \label{Trmk2.5}
In Theorem 2.3 of \cite{JD13}, the authors considered $H_t$ in \eqref{T1.5} with $\beta=2$ and $H_0$ deterministic. They showed when \eqref{TC} holds, $Y_t(f) \overset{d}{\longrightarrow} \cal N(0, \tilde{V}_{\mu_*}(f,f))$ as $N \to \infty$, where
\begin{equation*}
\tilde{V}_{\mu_*}(f,f)\deq \frac{1}{4\pi^2}\int \Big(\frac{f(x)-f(y)}{x-y}\Big)^2 \Big(\frac{\mu_*^2}{(x-y)^2+\mu_*^2}\Big)\, \dd x\, \dd y\,.
\end{equation*}
Note that we can rewrite $V_{\mu_*}(f,f)$ in \eqref{variance} into
\begin{equation} \label{T2.8}
\frac{1}{4\pi^2}\int (f(x)-f(y))^2\bigg(\frac{2(x-y)^2}{\big((x-y)^2+\mu_*^2\big)^2}+\frac{\mu_*^2}{(x-y)^2\big((x-y)^2+\mu^2_*\big)}\bigg)\, \dd x\,\dd y\,,
\end{equation}
and we see that the first term on \eqref{T2.8} vanishes as $\mu_* \to \infty$.
Thus a comparison of Theorem 2.3 of \cite{JD13} and Corollary \ref{TCo3} (i) shows that when we change $H_0$ from deterministic to a real Wigner matrix, we will get an additional term that corresponds to the eigenvalue fluctuation of the initial matrix. With the help of \eqref{T2.8}, we are able to identify the impacts of both the initial and the equilibrium conditions.

Corollary \ref{TCo3} (ii) also reveals some interesting phenomenon. By \eqref{T2.7} we see that the transition is attained at time $t \approx -\log \eta=\alpha \log N$. Thus comparing to the case $\beta=2$, it takes much longer time for the MLS to reach equilibrium in the case $\beta=1$. 
\end{remark}
	
One can check that all the above results can also be proved for the Green function $f(x)=(x-\ii)^{-1}$ using the method presented in this paper. For conciseness, we do not give the details here.

\section{Preliminaries} \label{sec2.5}

In this section we collect notations and tools that are used throughout the paper.

Let $M$ be an $N \times N$ matrix. We use $M^{\intercal}$ to denote the transpose of $M$, and let $M^{\intercal n} \deq (M^{\intercal})^n$, $M^{*n}\deq (M^{*})^n$, $M^{*}_{ij}\deq (M^{*})_{ij} = \ol M_{ji}$, and $M^n_{ij}\deq (M_{ij})^n$. We denote the normalized trace of $M$ by $\ul M \deq \frac{1}{N} \tr M$. For $\Sigma>0$, let $\cal N(0,\Sigma^2)$ denote the Gaussian random variable with mean 0 and variance $\Sigma^2$. We use $C_c^{\infty}(\bb R)$ to denote the set of real-valued smooth functions with compact support.

We define the resolvent of $H$ by $G(z)\deq (H-z)^{-1}$, where $\im z \ne 0$. The Stieltjes transform of the empirical spectral measure of $H$ is
\begin{equation} \label{2.4}
\underline{G(z)}\deq \frac{1}{N}\tr G(z)\,.
\end{equation}
For $z \in \bC$ with $\im z \ne 0$, the Stieltjes transform  of the Wigner semicircle law is defined by 
\begin{equation*} 
 m(z)\deq \int \frac{\varrho(x) }{x-z}\,\dd x\,,
\end{equation*}
and $m$ is the unique solution of
\begin{equation} \label{T3.3}
m(z)+\frac{1}{m(z)}+z=0
\end{equation}
satisfying $\im m(z) \im z > 0$.

We abbreviate $\langle X \rangle  \deq X-\bE X$ for any random variable $X$ with finite expectation. We first state an elementary result that will be useful for us.

\begin{lemma} \label{Tlemc}
	Let $X,Y,Z$ be random variables with finite third moments. We have
	\begin{equation*}
	\bb E XY\langle Z\rangle=\bb E \langle XY \rangle Z= \bb E X \bb E \langle Y\rangle \langle Z\rangle+ \bb E Y \bb E \langle X\rangle \langle Z\rangle+\bb E \langle X \rangle \langle Y \rangle Z-\bb E \langle X\rangle \langle Y\rangle \bb E Z\,.
	\end{equation*}
\end{lemma}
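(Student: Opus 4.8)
The plan is to reduce everything to two trivialities: linearity of expectation, and the defining property $\bb E \langle W \rangle = 0$ of the centering operation. First note that the integrability of every product occurring below follows from the finite-third-moment hypothesis via $|XYZ| \le \tfrac13(|X|^3 + |Y|^3 + |Z|^3)$ and the analogous bound for pairwise products, so all the expectations and manipulations are legitimate.

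The first equality is immediate. Expanding the centering in each side,
\begin{equation*}
\bb E\, XY\langle Z\rangle = \bb E\, XYZ - \bb E\, XY\cdot \bb E Z = \bb E\, \langle XY\rangle Z\,,
\end{equation*}
since $\bb E\,\langle XY\rangle Z = \bb E\, XYZ - \bb E\, XY\cdot\bb E Z$ as well. I will use this elementary identity again in the form $\bb E\, \langle W\rangle Z = \bb E\, WZ - \bb E W\cdot \bb E Z$ for a general random variable $W$ with finite second moment.

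For the second equality I would first record the decomposition of a centered product. Writing $X = \bb E X + \langle X\rangle$ and $Y = \bb E Y + \langle Y\rangle$ and using $\bb E\langle X\rangle = \bb E\langle Y\rangle = 0$, one finds $\bb E\, XY = \bb E X\,\bb E Y + \bb E\langle X\rangle\langle Y\rangle$, whence
\begin{equation*}
\langle XY\rangle = \langle X\rangle\langle Y\rangle - \bb E\langle X\rangle\langle Y\rangle + \bb E X\,\langle Y\rangle + \bb E Y\,\langle X\rangle = \big\langle \langle X\rangle\langle Y\rangle\big\rangle + \bb E X\,\langle Y\rangle + \bb E Y\,\langle X\rangle\,.
\end{equation*}
Now I would multiply this identity by $Z$ and take expectations term by term. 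For the last two terms, $\bb E[\langle Y\rangle Z] = \bb E\langle Y\rangle\langle Z\rangle$ and $\bb E[\langle X\rangle Z] = \bb E\langle X\rangle\langle Z\rangle$, again because $\bb E\langle X\rangle = \bb E\langle Y\rangle = 0$. For the first term, applying $\bb E\,\langle W\rangle Z = \bb E\, WZ - \bb E W\cdot\bb E Z$ with $W = \langle X\rangle\langle Y\rangle$ gives $\bb E\big\langle \langle X\rangle\langle Y\rangle\big\rangle Z = \bb E\langle X\rangle\langle Y\rangle Z - \bb E\langle X\rangle\langle Y\rangle\cdot\bb E Z$. Summing the four contributions produces exactly the claimed right-hand side.

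There is no genuine obstacle here beyond careful bookkeeping; the only point worth flagging is that the finite-third-moment assumption is precisely what guarantees that the triple products such as $\langle X\rangle\langle Y\rangle Z$ appearing in intermediate steps are integrable, so that splitting expectations of sums is valid throughout.
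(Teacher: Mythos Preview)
Your proof is correct. The paper itself does not give a proof of this lemma, treating it as an elementary result; your argument via the decomposition $\langle XY\rangle = \bb E X\,\langle Y\rangle + \bb E Y\,\langle X\rangle + \langle X\rangle\langle Y\rangle - \bb E\langle X\rangle\langle Y\rangle$ is exactly the routine verification one would expect, and your remark on integrability from the third-moment hypothesis is a nice touch.
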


We now state the complex cumulant expansion formula that is a central ingredient of the proof. 
\begin{lemma}(Lemma 7.1,\cite{HK}) \label{lem:3.1}
	Let $h$ be a complex random variable with all its moments exist. The $(p,q)$-cumulant of $h$ is defined as
	$$
	\mathcal{C}_{p,q}(h)\deq (-i)^{p+q} \cdot \left(\frac{\partial^{p+q}}{\partial {s^p} \partial {t^q}} \log \bE e^{\mathrm{i}sh+\mathrm{i}t\bar{h}}\right) \bigg{|}_{s=t=0}\,.
	$$
	Let $f:\bC^2 \to \bC$ be a smooth function, and we denote its holomorphic  derivatives by
	$$
	f^{(p,q)}(z_1,z_2)\deq \frac{\partial^{p+q}}{\partial {z_1}^p \partial {z_2}^q} f(z_1,z_2)\,.
	$$ Then for any fixed $\ell \in \bN$, we have
	\begin{equation} \label{5.16}
	\bE f(h,\bar{h})h=\sum_{k=0}^{\ell}\sum_{\substack{p,q \ge 0,\\p+q=k}} \frac{1}{p!\,q!}\mathcal{C}_{p+1,q}(h)\bE f^{(p,q)}(h,\bar{h}) + R_{\ell+1}\,,
	\end{equation}
	given all integrals in \eqref{5.16} exists. Here $R_{\ell+1}$ is the remainder term depending on $f$ and $h$, and for any $t>0$, we have the estimate
	\begin{equation} \label{T3.4}
	\begin{aligned}
	R_{l+1}=&\ O(1)\cdot \bE \big|h^{\ell+2}\cdot\mathbf{1}_{\{|h|>t\}}\big|\cdot \max\limits_{p+q=\ell+1}\big\| f^{(p,q)}(z,\bar{z})\big\|_{\infty} \\
	&+O(1) \cdot \bE |h|^{\ell+2} \cdot \max\limits_{p+q=\ell+1}\big\| f^{(p,q)}(z,\bar{z})\cdot \mathbf{1}_{\{|z|\le t\}}\big\|_{\infty}\,.
	\end{aligned}
	\end{equation}
\end{lemma}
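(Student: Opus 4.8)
The plan is to reduce the identity \eqref{5.16}, term by term, to a single fact about $h$—the bivariate moment–cumulant recursion—and to obtain the error estimate \eqref{T3.4} from Taylor's theorem with integral remainder. Since all moments of $h$ exist, the characteristic function $\mu(s,t)\deq\bE\,\me^{\ii sh+\ii t\bar h}$ is $C^\infty$ in a neighbourhood of the origin and $\mu(0,0)=1$, so $\Lambda\deq\log\mu$ is $C^\infty$ near $0$, with $\partial_s^p\partial_t^q\Lambda(0,0)=\ii^{p+q}\mathcal C_{p,q}(h)$ by definition of the cumulants. Differentiating $\mu=\me^{\Lambda}$ once in $s$ and comparing Taylor coefficients at the origin—only the jet of order $\le\ell+1$ is involved, so no convergence issue arises—gives, for all $P,Q\ge0$,
\begin{equation*}
\bE[h^{P+1}\bar h^{Q}]=\sum_{p\le P,\ q\le Q}\binom{P}{p}\binom{Q}{q}\,\mathcal C_{p+1,q}(h)\,\bE[h^{P-p}\bar h^{Q-q}]\,.
\end{equation*}
This is the only input about $h$ used below.

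On the analytic side I would expand $f$ about the origin. Viewing $G(h)\deq f(h,\bar h)$ as a smooth function of $(\re h,\im h)$ and using that $f$ is holomorphic in each of $z_1,z_2$ on the relevant domain—which is the case in the applications, where $f$ is a rational, hence holomorphic, function of the entries $H_{ij}$ and $H_{ji}=\bar H_{ij}$—the Wirtinger chain rule yields $\partial_h^{a}\partial_{\bar h}^{b}[f(h,\bar h)]=f^{(a,b)}(h,\bar h)$, so the multinomial Taylor expansion to order $\ell$ reads
\begin{equation*}
f(h,\bar h)=\sum_{p+q\le\ell}\frac{f^{(p,q)}(0,0)}{p!\,q!}\,h^{p}\bar h^{q}+(\ell+1)\!\!\sum_{p+q=\ell+1}\frac{h^{p}\bar h^{q}}{p!\,q!}\int_0^1(1-u)^{\ell}f^{(p,q)}(uh,u\bar h)\,\dd u\,.
\end{equation*}
Multiplying by $h$ and taking expectations expresses $\bE[hf(h,\bar h)]$ as $\sum_{p+q\le\ell}\frac{f^{(p,q)}(0,0)}{p!q!}\bE[h^{p+1}\bar h^{q}]$ plus a remainder of the above type. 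Doing the same to each $\bE[f^{(p,q)}(h,\bar h)]$ appearing in \eqref{5.16}, now expanding only to order $\ell-p-q$, and collecting all contributions according to the coefficient of $f^{(P,Q)}(0,0)$, one finds that the two sides of \eqref{5.16}—modulo remainders—agree precisely when the moment–cumulant recursion above holds for every $(P,Q)$ with $P+Q\le\ell$. This disposes of the main terms.

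Finally, every leftover remainder—both those produced by expanding $f$ and those produced by expanding the derivatives $f^{(p,q)}$—is, up to a combinatorial constant, of the form $\bE\big[|h|^{\ell+2}\cdot\big(\text{average of }|f^{(P,Q)}(z,\bar z)|\text{ over }z\in[0,h]\big)\big]$ with $P+Q=\ell+1$. Splitting $\bE[\,\cdot\,]=\bE[\,\cdot\,\mathbf 1_{\{|h|\le t\}}]+\bE[\,\cdot\,\mathbf 1_{\{|h|>t\}}]$ and estimating the average by $\|f^{(P,Q)}(z,\bar z)\mathbf 1_{\{|z|\le t\}}\|_\infty$ on the first event and by $\|f^{(P,Q)}(z,\bar z)\|_\infty$ on the second reproduces exactly the two-term bound \eqref{T3.4}. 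I expect the only real difficulty to be organizational: carrying out the match of the two multi-indexed sums through the recursion without slips in the binomial factors, and then checking that all the scattered remainders indeed collapse into the stated form. A slicker but less self-contained alternative is to verify \eqref{5.16} first for $f(z_1,z_2)=\me^{\ii sz_1+\ii tz_2}$, where it is merely $\bE[h\me^{\ii sh+\ii t\bar h}]=\tfrac{1}{\ii}\partial_s\mu=\tfrac{1}{\ii}(\partial_s\Lambda)\mu$ with $\partial_s\Lambda$ Taylor-truncated at order $\ell$, and then to pass to general $f$ by Fourier inversion in $(\re h,\im h)$; this makes the algebra transparent but forces one to confront the fact that $\Lambda$ is defined only near the origin, which is once again what produces the cut-off at scale $t$.
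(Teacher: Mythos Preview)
The paper does not prove this lemma; it is quoted from \cite{HK} (Lemma~7.1 there) and used as a black box, so there is no in-paper proof to compare against. Your approach---verify the identity on polynomials via the bivariate moment--cumulant recursion, extend to smooth $f$ by Taylor expansion at the origin, and control the integral remainders through the $\{|h|\le t\}/\{|h|>t\}$ split---is correct and is essentially how such formulas are established in the literature.

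One small correction: the remainders produced by expanding the right-hand side terms $\bE f^{(p,q)}(h,\bar h)$ are not ``up to a combinatorial constant'' of the form $\bE\big[|h|^{\ell+2}\cdot(\dots)\big]$; they carry the cumulant $\mathcal C_{p+1,q}(h)$ as a prefactor and only a power $|h|^{\ell+1-p-q}$ inside the expectation. To reduce these to the stated bound you need $|\mathcal C_{p+1,q}(h)|\le C_\ell\,\bE|h|^{p+q+1}$ (cumulants bounded by moments) together with Lyapunov's inequality, and for the tail piece also the elementary fact $\bE|h|^{\ell+2}\cdot\P(|h|>t)\le 2\,\bE\big[|h|^{\ell+2}\mathbf 1_{\{|h|>t\}}\big]$, which holds because $|h|^{\ell+2}$ is at least $t^{\ell+2}$ on the event and at most $t^{\ell+2}$ off it. With this bookkeeping your argument goes through; the Fourier alternative you mention is also a legitimate shortcut for the algebraic identity.
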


The bulk of the proof is performed on Wigner matrices satisfying a stronger condition than Definition \ref{def:Wigner} (iii) by having entries with finite moments of all order.
\begin{definition} \label{def:dWigner}
We consider the subset of Wigner matrices obtained from Definition \ref{def:Wigner} by replacing (iii) with
	\begin{itemize}
		\item [(iii)']
		For each $p \in \N$ there exists a constant $C_p$ such that $\E \abs{\sqrt{N} H_{ij}}^p \leq C_p$ for all $N,i,j$.
	\end{itemize}
\end{definition}

From now on we shall focus on Wigner matrices satisfying Definition. Relaxing condition (iii)' to (iii) using a Green function comparison argument is done in Section 6 of \cite{HK}, and one can see that the same method also works for this paper.

The following result gives bounds on the cumulants of the entries of $H$.
\begin{lemma} \label{Tlemh}
	If $H$ satisfies Definition \ref{def:dWigner} then for every $p,q \ge 0, p+q=k$, and $k \in \bb N$ we have
	\begin{equation*}
	\cal C_{p,q}(H_{ij})=O_{k}(N^{-k/2})
	\end{equation*}
	uniformly for all $i,j$, and $\cal C_{1,0}(H_{ij})=\cal C_{0,1}(H_{ij})=0$.
\end{lemma}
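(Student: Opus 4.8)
The plan is to deduce this from the standard moment--cumulant relation together with condition (iii)' of Definition \ref{def:dWigner}. First I would record the only consequence of (iii)' that is actually used: since $\bE|\sqrt N H_{ij}|^p \le C_p$, for any $a,b \ge 0$ with $a+b = j$ we have $|\bE[H_{ij}^a \bar H_{ij}^{\,b}]| \le \bE|H_{ij}|^j \le C_j N^{-j/2}$, uniformly in $N$ and in $i,j$. Thus every mixed moment of $H_{ij}$ of total order $j$ is $O_j(N^{-j/2})$.

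Next I would expand $\log \bE\, \me^{\ii s H_{ij} + \ii t \bar H_{ij}}$ in powers of $(s,t)$. Writing $\varphi(s,t) \deq \bE\, \me^{\ii s H_{ij} + \ii t \bar H_{ij}} = \sum_{a,b \ge 0} \frac{(\ii s)^a (\ii t)^b}{a!\,b!}\, \bE[H_{ij}^a \bar H_{ij}^{\,b}]$ and using $\log(1+w) = \sum_{m\ge 1} \frac{(-1)^{m-1}}{m} w^m$ with $w = \varphi - 1$ (which has no constant term, since $\varphi(0,0)=1$), one finds that the Taylor coefficient of $s^p t^q$ in $\log\varphi$ --- which is $\frac{\ii^{p+q}}{p!\,q!}\, \cal C_{p,q}(H_{ij})$ by the definition in Lemma \ref{lem:3.1} --- is a finite linear combination, with universal coefficients depending only on $k \deq p+q$, of products $\prod_{\ell} \bE[H_{ij}^{a_\ell} \bar H_{ij}^{\,b_\ell}]$ in which the total degree $\sum_{\ell}(a_\ell + b_\ell)$ equals $k$. (This is just the complex joint version of the classical moment--cumulant formula indexed by set partitions of a $k$-element set.) By the first paragraph each such product is $O_k(N^{-k/2})$, and the number of terms depends only on $k$, so $\cal C_{p,q}(H_{ij}) = O_k(N^{-k/2})$, uniformly in $i,j$.

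Finally, for $k=1$ the formula degenerates to $\cal C_{1,0}(H_{ij}) = \bE H_{ij}$ and $\cal C_{0,1}(H_{ij}) = \bE \bar H_{ij}$, both of which vanish by the mean-zero assumption in Definition \ref{def:Wigner} (ii).

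There is no real obstacle here; the content is bookkeeping. The one point requiring care is the homogeneity claim: every monomial appearing in the $s^p t^q$-coefficient of $\log\varphi$ is a product of moments whose total degree is exactly $p+q$. This is what converts the graded bounds $O(N^{-j/2})$ into the single clean bound $O(N^{-k/2})$, and it follows immediately from the fact that $w = \varphi - 1$ begins at first order in $(s,t)$, so degrees add up across the factors of $w^m$. Uniformity in $i,j$ is automatic because the constants $C_p$ in (iii)' are uniform in $i,j$.
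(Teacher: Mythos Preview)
Your argument is correct and is essentially the same as the paper's: the paper's one-line proof invokes the homogeneity of cumulants, which is precisely the statement that $\cal C_{p,q}(\lambda h)=\lambda^{k}\cal C_{p,q}(h)$ for real $\lambda>0$, applied with $\lambda=N^{-1/2}$ and $h=\sqrt{N}H_{ij}$; your moment--cumulant expansion is just an explicit verification of that homogeneity. The vanishing of the first cumulants is handled identically.
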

\begin{proof}
	 This follows by the homogeneity of the cumulants.
\end{proof}
We shall also use the following result in complex analysis, which can be viewed as an approximate Cauchy identity for almost analytic functions.
\begin{lemma} \label{lem5.1}
	Let $f\in C^{\infty}_c(\bb R)$. Fix $k \in \bb N+$, and let $\tilde{f}$ be the almost analytic extension of $f$ defined by
	\begin{equation} \label{5.1}
	\tilde{f}(x+\ii y)=f(x)+\sum_{j=1}^{k}\frac{1}{j!}(\ii y)^j f^{(j)}(x).
	\end{equation}
	Let $a>0$, and we denote $D_a=\{(x+\ii y):x \in \bb R, |y|\le a \}$. For any $\lambda \in \bb R$, we have
	\begin{equation*}
	f(\lambda)=\frac{\ii}{2\pi} \oint_{\partial D_a}\frac{\tilde f(z)}{\lambda-z}\, \dd z+\frac{1}{\pi} \int_{D_a} \frac{\partial_{\bar{z}}\tilde f(z)}{\lambda-z}\, \dd^2z\,.
	\end{equation*}
\end{lemma}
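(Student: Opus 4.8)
\textbf{Proof proposal for Lemma \ref{lem5.1}.}

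The plan is to prove this by a double application of the Cauchy integral formula / Green's formula (the complex version of Stokes), exploiting that the almost analytic extension $\tilde f$ defined in \eqref{5.1} is not holomorphic but has $\partial_{\bar z}\tilde f$ vanishing to high order along the real axis. First I would fix $\lambda \in \bb R$ and a small $\varepsilon > 0$, and excise a disk $B_\varepsilon(\lambda)$ of radius $\varepsilon$ around $\lambda$ from the rectangular domain $D_a$ (note $\lambda$ lies on the real axis, hence in the interior of $D_a$). On the punctured domain $D_a \setminus \overline{B_\varepsilon(\lambda)}$ the function $z \mapsto \tilde f(z)/(\lambda - z)$ is smooth, so I can apply the identity $\int_\Omega \partial_{\bar z} g \, \dd^2 z = \frac{\ii}{2}\oint_{\partial \Omega} g\, \dd z$ (Green/Stokes in complex form, with the boundary positively oriented) to $g(z) = \tilde f(z)/(\lambda-z)$ on $\Omega = D_a \setminus \overline{B_\varepsilon(\lambda)}$. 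Since $1/(\lambda-z)$ is holomorphic away from $z = \lambda$, the product rule gives $\partial_{\bar z} g(z) = \partial_{\bar z}\tilde f(z)/(\lambda - z)$ on $\Omega$, and the boundary $\partial \Omega$ consists of $\partial D_a$ (counterclockwise) together with $\partial B_\varepsilon(\lambda)$ traversed clockwise.

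The second step is to evaluate the contribution of the small circle as $\varepsilon \to 0$. Parametrizing $z = \lambda + \varepsilon \me^{\ii\theta}$, the integral $\frac{\ii}{2}\oint_{\partial B_\varepsilon(\lambda)}\frac{\tilde f(z)}{\lambda - z}\dd z$ (clockwise) becomes, after a standard computation, $-\pi \tilde f(\lambda) + o(1) = -\pi f(\lambda) + o(1)$, since $\tilde f(\lambda) = f(\lambda)$ by \eqref{5.1} and $\tilde f$ is continuous. Meanwhile the area integral $\frac{1}{\pi}\int_{D_a \setminus \overline{B_\varepsilon(\lambda)}} \partial_{\bar z}\tilde f(z)/(\lambda-z)\,\dd^2 z$ converges to $\frac{1}{\pi}\int_{D_a} \partial_{\bar z}\tilde f(z)/(\lambda-z)\,\dd^2 z$ as $\varepsilon \to 0$: this requires checking that the integrand is absolutely integrable near $\lambda$, which follows because $\partial_{\bar z}\tilde f(x + \ii y) = \frac{1}{2}(\partial_x + \ii \partial_y)\tilde f = \frac{(\ii y)^k}{2\, k!} f^{(k+1)}(x)$ vanishes like $|y|^k \geq |z-\lambda|^k$ with $k \geq 1$, so $\partial_{\bar z}\tilde f(z)/(\lambda-z) = O(|z-\lambda|^{k-1})$ is bounded (integrable) near $\lambda$. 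Rearranging the resulting identity — boundary term over $\partial D_a$ minus the $-\pi f(\lambda)$ from the small circle equals $2/\ii$ times the area integral over $D_a$ — and solving for $f(\lambda)$ yields exactly the claimed formula, matching the constants $\frac{\ii}{2\pi}$ and $\frac{1}{\pi}$.

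The main technical points to handle carefully are: (a) orientations and the sign/constant bookkeeping in the complex Green's formula $\int_\Omega \partial_{\bar z} g\,\dd^2 z = \frac{\ii}{2}\oint_{\partial\Omega} g\,\dd z$ — getting $\frac{\ii}{2\pi}$ rather than $-\frac{\ii}{2\pi}$ on the contour term depends on this; (b) the local integrability of $\partial_{\bar z}\tilde f(z)/(\lambda - z)$ near $z=\lambda$, which is precisely where the high-order vanishing of $\partial_{\bar z}\tilde f$ built into the definition \eqref{5.1} is used, and also where $f \in C_c^\infty$ (so $f^{(k+1)}$ exists and is bounded) matters; and (c) the limiting evaluation of the small-circle integral, using continuity of $\tilde f$ at $\lambda$ and $\oint \dd z/(\lambda - z) = -2\pi\ii$ over the clockwise circle. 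I expect step (a), the constant/orientation accounting, to be the place where a careless argument goes wrong, while step (b) is the conceptual heart that explains why the almost analytic extension is truncated at order $k$. Compactness of $\supp f$ also ensures all integrals over the unbounded strip $D_a$ converge, since $\tilde f$ and its derivatives are compactly supported in the $x$-direction.
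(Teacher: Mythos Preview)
Your proposal is correct and follows exactly the approach the paper sketches: apply Green's formula to $\tilde f(z)/(\lambda-z)$ on $D_a\setminus B_\varepsilon(\lambda)$ and send $\varepsilon\downarrow 0$. One small slip: in step (b) the inequality should read $|y|\le |z-\lambda|$ (not $\ge$), which is what actually gives $|\partial_{\bar z}\tilde f(z)/(\lambda-z)|=O(|z-\lambda|^{k-1})$.
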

\begin{proof}
	Let $\varepsilon >0$ be small such that $B_{\varepsilon} (\lambda) \subset D_a$, where we use $B_{\varepsilon}(\lambda)$ to denote the ball centered at $\lambda$ with radius $\varepsilon$. The result follows by first use Green's formula for the function $\tilde f(z)/(\lambda-z)$ on $D_a\backslash B_{\varepsilon}(\lambda)$, and then let $\varepsilon \downarrow 0$.
\end{proof}

The following definition introduces a notion of a high-probability bound that is suited for our purposes. It was introduced (in a more general form) in \cite{EKYY4}.
\begin{definition}[Stochastic domination] \label{def:2.3} 
	Let $$X=\pb{X^{(N)}(u): N \in \bN, u \in U^{(N)}}\,,\qquad Y=\pb{Y^{(N)}(u): N \in \bN, u \in U^{(N)}}$$ be two families of nonnegative random variables, where $U^{(N)}$ is a possibly $N$-dependent parameter set. We say that $X$ is stochastically dominated by $Y$, uniformly in $u$, if for all (small) $\varepsilon>0$ and (large) $D>0$ we have
	\begin{equation} \label{2.10}
	\sup\limits_{u \in U^{(N)}}	\bP \left[ X^{(N)}(u) > N^{\varepsilon} Y^{(N)}(u) \right] \le N^{-D}
	\end{equation} 
	for large enough $N \ge N_0(\varepsilon,D)$. If $X$ is stochastically dominated by $Y$, uniformly in $u$, we use the notation $X \prec Y$. Moreover,
	if for some complex family $X$ we have $|X|\prec Y$ we also write $X=O_{\prec}(Y)$. (Note that for
	deterministic $X$ and $Y$, $X =O_\prec(Y)$ means $X= O_{\epsilon}(N^{\epsilon}Y)$ for any $\epsilon> 0$.)
\end{definition}

Next we recall the local semicircle law for Wigner matrices from \cite{EKYY4,EYY3}. For a recent survey of the local semicircle law, see \cite{BK16}, where the following version of the local semicircle law is stated.
\begin{theorem}[Local semicircle law] \label{refthm1}
	Let $H$ be a Wigner matrix satisfying Definition \ref{def:dWigner}, and define the spectral domain 
	$$ 
	{\bf S}  \deq  \{E+\mathrm{i}\eta: |E| \le 10, 0 <  \eta \le 10 \}\,.
	$$
	Then we have the bounds
	\begin{equation} \label{3.3}
	\max\limits_{i,j}|G_{ij}(z)-\delta_{ij}m(z)| \prec \sqrt{\frac{\im m(z)}{N\eta}}+
\frac{1}{N\eta}
	\end{equation} 
and
	\begin{equation} \label{3.4}
	|\underline{G(z)}-m(z)| \prec \frac{1}{N\eta}\,,
	\end{equation}
	uniformly in $z =   
	E+\mathrm{i}\eta \in {\bf S}$. 
\end{theorem}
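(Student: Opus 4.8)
The plan is to prove \eqref{3.3} and \eqref{3.4} by the self-consistent-equation method: derive from the Schur complement formula an approximate fixed-point equation for the diagonal resolvent entries, bound the resulting error terms by large-deviation estimates, exploit the stability of \eqref{T3.3}, and finally remove the a priori assumptions by a continuity argument in the spectral parameter. Throughout, abbreviate the control parameter $\Psi(z)\deq\sqrt{\im m(z)/(N\eta)}+1/(N\eta)$, which is the right-hand side of \eqref{3.3}. For each $i$, let $H^{(i)}$ be the minor of $H$ with the $i$-th row and column deleted and $G^{(i)}\deq(H^{(i)}-z)^{-1}$; the Schur complement formula gives
\begin{equation*}
\frac{1}{G_{ii}}=H_{ii}-z-\sum_{k,l\ne i}H_{ik}\,G^{(i)}_{kl}\,H_{li}\,,
\end{equation*}
together with analogous identities expressing $G_{ij}$ for $i\ne j$ through $G_{ii}$, $G^{(i)}$, and the $i$-th and $j$-th rows of $H$.

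Conditioning on $H^{(i)}$, the quadratic form $\sum_{k,l\ne i}H_{ik}G^{(i)}_{kl}H_{li}$ has conditional expectation $\tfrac1N\sum_{k\ne i}G^{(i)}_{kk}=\underline{G^{(i)}}+O(1/(N\eta))$, and the moment condition (iii)$'$ together with the standard large-deviation bounds for linear and quadratic forms in independent centred variables yields $|H_{ii}|\prec N^{-1/2}$ and a fluctuation bound $O_{\prec}(\sqrt{\im\underline{G^{(i)}}/(N\eta)})$ for the quadratic form — provided one already knows the crude a priori bound $\max_{kl}|G^{(i)}_{kl}|=O_{\prec}(1)$. The interlacing identity $\underline{G^{(i)}}=\underline{G}+O(1/(N\eta))$ then removes all superscripts from the error terms.

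Combining the above, $G_{ii}^{-1}=-z-\underline{G}+\varepsilon_i$ with $\max_i|\varepsilon_i|\prec\Psi$; averaging over $i$ shows that $\underline{G}$ solves a perturbation of \eqref{T3.3}, namely $\underline{G}=(-z-\underline{G})^{-1}+O_{\prec}(\Psi)$. The stability of \eqref{T3.3} on $\mathbf S$ — controlled by the distance of $z$ to the edges $\pm2$ and by $|\underline{G}-m|$ itself — gives $|\underline{G}-m|\prec\Psi$, and then the \emph{fluctuation averaging} estimate (the $\varepsilon_i$ are only weakly correlated, so their mean is smaller than any single term by a further factor of order $\Psi$) upgrades this to the optimal $|\underline{G}-m|\prec1/(N\eta)$, which is \eqref{3.4}. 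Feeding \eqref{3.4} back into $G_{ii}^{-1}=-z-\underline{G}+\varepsilon_i$ yields $|G_{ii}-m|\prec\Psi$, and the off-diagonal Schur identities give $|G_{ij}|\prec\Psi$ for $i\ne j$; this is \eqref{3.3}. All these estimates are conditional on an a priori bound such as $\max_{ij}|G_{ij}|\le2$, which is removed by a continuity argument in $\eta$: the bounds hold trivially at $\eta=10$ (where $\|G\|\le1/10$ deterministically), the map $z\mapsto G_{ij}(z)$ is Lipschitz on $\mathbf S$ with an $N$-polynomial constant so that it suffices to control an $N^{-C}$-net of $\mathbf S$ and take a union bound, and the self-improving form of the estimate (a bound $\le2$ combined with the derived inequality forces $\prec\Psi\ll1$) propagates the conclusion down to all of $\mathbf S$.

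I expect the two delicate points to be: (i) the fluctuation-averaging step, which is what produces the genuinely optimal $1/(N\eta)$ in \eqref{3.4} rather than the weaker $\Psi$, and which requires a careful high-moment computation exploiting the near-independence of the minors $G^{(i)}$; and (ii) the stability analysis of \eqref{T3.3} near the edges $|E|\to2$, where the fixed-point map degenerates and one controls only $|\underline{G}-m|^2$ in terms of the error, so that the self-consistency loop must be closed with additional care. An alternative, closer to the method of the present paper, would bypass Schur complements entirely and derive the self-consistent equation for $\E$ of smooth functions of $G$ directly from the cumulant expansion of Lemma \ref{lem:3.1}, with concentration coming from the same expansion applied to higher moments; the same obstacles then reappear in a different guise.
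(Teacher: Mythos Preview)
Your sketch is the standard Schur-complement/self-consistent-equation route to the local semicircle law, and it is essentially correct as an outline of how the cited references \cite{EKYY4,EYY3,BK16} establish the result. However, the paper itself does \emph{not} prove Theorem~\ref{refthm1}: it is stated as a known input, introduced with ``Next we recall the local semicircle law for Wigner matrices from \cite{EKYY4,EYY3}'', and no proof is given. So there is nothing to compare your proposal against in the paper; the theorem is simply quoted and used.
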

	The following lemma is a preliminary estimate on $G$.
	\begin{lemma}[Lemma 4.4,\cite{HK}] \label{prop4.4}
		Let $H$ be a Wigner matrix satisfying Definition \ref{def:dWigner}. Fix $\alpha \in (0,1)$ and $E \in (-2,2)$. Let $G\equiv G(z)=(H-z)^{-1}$, where $z\deq E+\mathrm{i}\eta$ and $\eta\deq N^{-\alpha}$. For any fixed $k \in \bb N_{+}$ we have
		\begin{equation}  \label{Tk}
		\big|\big\langle \ul{G^{k}} \big\rangle \big|\prec N^{(k-1)\alpha-(1-\alpha)}
		\end{equation}
		as well as
		\begin{equation} \label{410}
		\big|\big(G^k\big)_{ij}\big|\prec
		\begin{cases}
		N^{(k-1)\alpha} & \txt{if } i = j
		\\
		N^{(k-1)\alpha-(1-\alpha)/2} & \txt{if } i \neq j\,,
		\end{cases}
		\end{equation}
		uniformly in $i,j$.
	\end{lemma}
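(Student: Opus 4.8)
The plan is to derive the estimates in Lemma \ref{prop4.4} directly from the local semicircle law (Theorem \ref{refthm1}), exactly as in \cite{HK}. The key input is that for $z = E + \ii\eta$ with $E \in (-2,2)$ fixed and $\eta = N^{-\alpha}$, $\alpha \in (0,1)$, we have $\im m(z) \sim 1$ and $|m(z)| \sim 1$, so \eqref{3.3} gives $\max_{ij}|G_{ij}(z) - \delta_{ij}m(z)| \prec (N\eta)^{-1/2} = N^{-(1-\alpha)/2}$, and \eqref{3.4} gives $|\ul{G(z)} - m(z)| \prec N^{-(1-\alpha)}$. In particular $|G_{ij}| \prec 1$ for all $i,j$ (with the off-diagonal entries being $O_\prec(N^{-(1-\alpha)/2})$), which is the base case $k=1$ of \eqref{410}.

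For \eqref{410} with general $k$, the plan is to iterate the resolvent bound together with the identity $\partial_z G = G^2$, which gives $G^k = \frac{1}{(k-1)!}\partial_z^{k-1} G$; alternatively and more elementarily, write $(G^k)_{ij} = \sum_{a} (G^{k-1})_{ia} G_{aj}$ and bound the sum. The cleanest route is to note $(G^{k})_{ij} = \sum_a (G)_{ia}(G^{k-1})_{aj}$ and use the trivial operator-norm-type bound $\sum_a |G_{ia}|^2 = (GG^*)_{ii} = \frac{\im G_{ii}}{\eta} \prec \eta^{-1} = N^{\alpha}$ (the Ward identity), together with Cauchy--Schwarz, to gain a factor $N^{\alpha}$ per extra power of $G$; this yields $|(G^k)_{ij}| \prec N^{(k-1)\alpha}$ in general and the improved $N^{(k-1)\alpha - (1-\alpha)/2}$ when $i \neq j$ by peeling off one genuinely off-diagonal factor at the start or end of the chain. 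For \eqref{Tk}, one writes $\ul{G^k} = \frac{1}{N}\tr G^k = \frac{1}{k-1}\frac{\dd}{\dd z}\ul{G^{k-1}}$, so that $\langle \ul{G^k}\rangle$ is (up to the constant) the derivative of $\langle \ul{G^{k-1}}\rangle$; starting from $|\langle \ul G\rangle| \prec N^{-(1-\alpha)} = N^{0\cdot\alpha - (1-\alpha)}$ from \eqref{3.4} and using a Cauchy-integral / finite-difference argument on a disc of radius $\sim \eta$ around $z$ (on which $\im m \sim 1$ still holds and the local law still applies), each differentiation costs a factor $\eta^{-1} = N^{\alpha}$, giving $|\langle \ul{G^k}\rangle| \prec N^{(k-1)\alpha - (1-\alpha)}$ after $k-1$ steps.

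The main technical obstacle, as usual with stochastic domination, is that $\prec$-bounds are not literally closed under differentiation in $z$, so the step "each $\partial_z$ costs $N^\alpha$" must be justified via a deterministic argument: one fixes a lattice of points in $\bf S$ at scale $N^{-10}$, upgrades the pointwise-in-$z$ bounds of Theorem \ref{refthm1} to a bound uniform on this lattice (paying only a polynomial factor in the union bound, absorbed by $N^\epsilon$), then extends to all $z$ in a neighborhood by the trivial Lipschitz bound $\|\partial_z G\| = \|G^2\| \le \eta^{-2}$, and finally applies the Cauchy integral formula on a circle of radius $c\eta$ to convert a uniform sup-bound into a bound on derivatives with the expected $\eta^{-1}$ loss per derivative. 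This is a standard maneuver (carried out in \cite{HK}, and the excerpt explicitly cites Lemma 4.4 of \cite{HK}), so I would simply invoke it; the only point requiring a line of care is checking that on the disc $|w - z| \le c\eta$ one still has $\im m(w) \sim 1$, $|m(w)|\sim 1$, and $\im w \sim \eta$, which holds for $c$ small since $E$ is in the bulk and $\eta \le N^{-\tau} \ll 1$.
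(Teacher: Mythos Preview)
The paper does not give its own proof of this lemma; it is imported verbatim from \cite{HK} (as the heading ``Lemma 4.4, \cite{HK}'' indicates). Your sketch is correct and is exactly the argument used in \cite{HK}: both \eqref{Tk} and \eqref{410} follow from the local semicircle law via the identity $G^k=\frac{1}{(k-1)!}\partial_z^{\,k-1}G$ together with the Cauchy integral formula on a disc of radius $c\eta$, after first upgrading the pointwise-in-$z$ bounds of Theorem \ref{refthm1} to uniform bounds on a fine lattice (with the trivial Lipschitz estimate $\|\partial_z G\|\le \eta^{-2}$ filling in between lattice points). The only cosmetic difference is that \cite{HK} states the Cauchy/lattice step once and then reads off both \eqref{Tk} and \eqref{410} directly from the $k=1$ bounds, rather than also mentioning the Ward-identity/Cauchy--Schwarz alternative; the latter route you describe is closer to what the present paper does in Appendix \ref{TA} for the more general products in Lemma \ref{Tlem3.8}, and would also work here, but the differentiation argument is the one actually used for Lemma \ref{prop4.4}.
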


In addition, we also have the following lemma, whose proof is given in Appendix \ref{TA}.
\begin{lemma} \label{Tlem3.8}
	We adopt the assumptions of Lemma \ref{prop4.4}. Let $F\deq (H-z')^{-1}$, where $z'\deq E'+\ii \eta$, and $E'\in (-2,2)$. We denote
		\begin{equation*}
		\cal G\deq \{G,G^{\intercal},\ol{G},G^{*},F,F^{\intercal},\ol{F},F^{*}\}\,.
		\end{equation*}
		For any fixed $k \in \bb N_{+}$, we have
	\begin{equation} \label{T3.144}
	\big|\big(G^{(1)}\cdots G^{(k)}\big)_{ij} \big| \prec N^{(k-1)\alpha}
	\end{equation}
	uniformly in $i,j$, and $G^{(1)},\dots,G^{(k)}\in \cal G$.
\end{lemma}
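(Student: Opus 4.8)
The plan is to reduce the bound to two robust facts about a \emph{single} resolvent and then combine them by Cauchy--Schwarz together with submultiplicativity of the operator norm, thereby splitting off exactly the two ``boundary'' factors that carry the off-diagonal gain.

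\medskip\noindent\textbf{Reduction to single resolvents.} First I would observe that every member of $\cal G$ is the resolvent of a Wigner matrix satisfying Definition \ref{def:dWigner}, evaluated at a spectral parameter $w$ with $\abs{\im w}=\eta$ and $\re w\in\{E,E'\}\subset(-2,2)$. Indeed, since $H=H^{*}$ we have $H^{\intercal}=\overline H$, so $G^{\intercal}(z)=(\overline H-z)^{-1}$, $\overline{G(z)}=(\overline H-\overline z)^{-1}$, $G^{*}(z)=(H-\overline z)^{-1}$, and likewise for $F$; and because $\sigma\in[-1,1]$ is real, $\overline H$ again satisfies Definition \ref{def:dWigner}. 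Hence Theorem \ref{refthm1} (or, if one prefers, \eqref{410} with $k=1$) applies to each $R\in\cal G$, and I would record two consequences, uniform in the index and in $R\in\cal G$ (a finite family):
\begin{itemize}
\item the \emph{deterministic} operator-norm bound $\norm{R}_{\mathrm{op}}\le 1/\eta=N^{\alpha}$;
\item the row/column $L^{2}$ bound $\sum_{a}\abs{R_{ia}}^{2}\prec N^{\alpha}$ and $\sum_{a}\abs{R_{ai}}^{2}\prec N^{\alpha}$.
\end{itemize}
The second one follows from the Ward identity $RR^{*}=R^{*}R=\eta^{-1}\im R'$, where $R'$ is the resolvent with argument in the upper half-plane ($R'=R$ or $R'=R^{*}$), which gives $\sum_{a}\abs{R_{ia}}^{2}=\eta^{-1}\abs{\im R'_{ii}}$, combined with the entrywise local law $\abs{R'_{ii}}\le\abs{m(w')}+O_{\prec}(N^{-(1-\alpha)/2})\prec 1$ (using $\abs{m}\le 1$ on the upper half-plane).

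\medskip\noindent\textbf{The Cauchy--Schwarz step.} For $k\ge 2$ and $R^{(1)},\dots,R^{(k)}\in\cal G$, write
$$\big(R^{(1)}\cdots R^{(k)}\big)_{ij}=\scalar{(R^{(1)})^{*}e_{i}}{R^{(2)}\cdots R^{(k-1)}\, R^{(k)}e_{j}}\,,$$
so that Cauchy--Schwarz and submultiplicativity give
$$\big|\big(R^{(1)}\cdots R^{(k)}\big)_{ij}\big|\le\norm{(R^{(1)})^{*}e_{i}}_{2}\cdot\Big(\prod_{m=2}^{k-1}\norm{R^{(m)}}_{\mathrm{op}}\Big)\cdot\norm{R^{(k)}e_{j}}_{2}\,.$$
The middle factor is $\le N^{(k-2)\alpha}$ deterministically by the first bullet; and $\norm{(R^{(1)})^{*}e_{i}}_{2}^{2}=\sum_{a}\abs{R^{(1)}_{ia}}^{2}\prec N^{\alpha}$, $\norm{R^{(k)}e_{j}}_{2}^{2}=\sum_{a}\abs{R^{(k)}_{aj}}^{2}\prec N^{\alpha}$ by the second bullet. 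Multiplying and using stability of $\prec$ under products and finite maxima yields $\prec N^{\alpha/2}\cdot N^{(k-2)\alpha}\cdot N^{\alpha/2}=N^{(k-1)\alpha}$, uniformly in $i,j$ and in the choice of factors. The case $k=2$ is the same computation with an empty (hence identity) middle factor, and $k=1$ is immediate from Theorem \ref{refthm1}.

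\medskip\noindent\textbf{Where the work sits.} There is no genuine analytic difficulty here; the proof is essentially a one-line Cauchy--Schwarz once the preparatory identifications are made, and, as required by Lemma \ref{Tlem3.8}, it does not attempt to distinguish $i=j$ from $i\ne j$. The two points that do need care are bookkeeping: (i) checking that all eight elements of $\cal G$ really are resolvents covered by Theorem \ref{refthm1} — the crucial input being that complex conjugation preserves the Wigner class \emph{exactly because} $\sigma$ is real — so that the local law and the Ward identity are available for each with uniform constants; and (ii) verifying that the operator-norm bound is genuinely deterministic, so that all the randomness is confined to the two boundary factors. Once these are settled, the loss is exactly $N^{(k-1)\alpha}$ as claimed.
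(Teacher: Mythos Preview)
Your argument is correct. The reduction is clean: each $R\in\cal G$ is a resolvent at imaginary height $\eta$, so $\norm{R}_{\mathrm{op}}\le\eta^{-1}$ deterministically and the Ward identity plus \eqref{3.3} give $\sum_a\abs{R_{ia}}^{2}\prec N^{\alpha}$; Cauchy--Schwarz then peels off the two boundary factors and the middle product is bounded by the operator norm.

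This is genuinely different from, and simpler than, the paper's proof. The paper argues by induction on $k$, splitting the product at the midpoint via Cauchy--Schwarz to obtain two positive quantities of the form $(A A^{*})_{ii}$, then collapsing the central pair $G^{(m)}G^{(m)*}$ through the resolvent identity $G^{(m)}G^{(m)*}=(2\eta)^{-1}(G^{(m)}-G^{(m)*})$ to lower the length by one and invoke the inductive hypothesis; for odd $k$ this only yields a self-improving bound $\lambda\mapsto(\lambda/\eta^{2p})^{1/2}$ that has to be iterated from the trivial $\eta^{-k}$. Your approach replaces all of this with the single observation that the deterministic operator norm already controls every interior factor at the optimal rate $N^{\alpha}$, so no induction or bootstrap is needed. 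What the paper's route would buy is robustness in settings where one wants to avoid the crude operator-norm bound on interior factors---for instance when proving the sharper off-diagonal estimate $N^{(k-1)\alpha-(1-\alpha)/2}$ in \eqref{410}, where one genuinely needs the local-law gain from more than two factors---but for the statement of Lemma \ref{Tlem3.8} as written, your argument is both shorter and more transparent.
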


Lemmas \ref{prop4.4} and \ref{Tlem3.8} are very useful in estimating the expectations involving entries of $G$, in combination with the following elementary result about stochastic domination.

\begin{lemma} \label{prop_prec}
	\begin{enumerate}
		\item
		If $X_1 \prec Y_1$ and $X_2 \prec Y_2$ then $X_1 X_2 \prec Y_1 Y_2$.
		\item
		Suppose that $X$ is a nonnegative random variable satisfying $X \leq N^C$ and $X \prec \Phi$ for some deterministic $\Phi \geq N^{-C}$. Then $\E X \prec \Phi$.
	\end{enumerate}
\end{lemma}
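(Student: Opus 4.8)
The plan is to deduce both parts directly from the definition of stochastic domination in Definition \ref{def:2.3}, using only elementary union bounds and, for part (ii), a single truncation; no input beyond that definition is needed, so the whole argument is bookkeeping of the parameters $\varepsilon$ and $D$.

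For part (i), I would fix $\varepsilon>0$ and $D>0$ and bound $\P[X_1X_2>N^{\varepsilon}Y_1Y_2]$ uniformly in $u$ via the deterministic inclusion
\[
\{X_1X_2>N^{\varepsilon}Y_1Y_2\}\subseteq\{X_1>N^{\varepsilon/2}Y_1\}\cup\{X_2>N^{\varepsilon/2}Y_2\},
\]
which holds because $X_i,Y_i\ge 0$: on the complement of the right-hand side one has $X_1X_2\le N^{\varepsilon/2}Y_1\cdot N^{\varepsilon/2}Y_2$. Applying $X_1\prec Y_1$ and $X_2\prec Y_2$ with the parameters $\varepsilon/2$ and $D$ and taking a supremum over $u$, the union has probability at most $2N^{-D}\le N^{-D+1}$ once $N$ is large; since $D$ was arbitrary this is exactly $X_1X_2\prec Y_1Y_2$, and the uniformity in $u$ is preserved because the union bound does not see $u$.

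For part (ii), I would fix $\varepsilon>0$ and split the expectation at the threshold $N^{\varepsilon}\Phi$, using the a priori bound $X\le N^{C}$ on the upper tail:
\[
\E X=\E\big[X\mathbf{1}_{\{X\le N^{\varepsilon}\Phi\}}\big]+\E\big[X\mathbf{1}_{\{X>N^{\varepsilon}\Phi\}}\big]\le N^{\varepsilon}\Phi+N^{C}\,\P[X>N^{\varepsilon}\Phi].
\]
Then I would apply $X\prec\Phi$ with parameters $\varepsilon$ and a fixed $D$ large enough that $N^{C-D}\le N^{-C}\le\Phi$, e.g.\ any $D\ge 2C$, which is legitimate since $\Phi\ge N^{-C}$. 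For $N$ large the second term is then at most $N^{-C}\le\Phi$, so $\E X\le N^{\varepsilon}\Phi+\Phi\le 2N^{\varepsilon}\Phi$, i.e.\ $\E X=O_{\varepsilon}(N^{\varepsilon}\Phi)$; by the convention recorded in Definition \ref{def:2.3} for deterministic quantities this is precisely $\E X\prec\Phi$.

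I do not expect a genuine obstacle: this is a standard lemma. The only point needing a little care is balancing, in part (ii), the truncation level $N^{C}$ and the lower bound $\Phi\ge N^{-C}$ against a fixed $C$-dependent choice of the exponent $D$ in the tail estimate, so that the tail contribution $N^{C}\,\P[X>N^{\varepsilon}\Phi]$ gets absorbed into $\Phi$ rather than swelling the main term $N^{\varepsilon}\Phi$.
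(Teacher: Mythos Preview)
Your argument is correct and is the standard way to verify these facts. The paper itself gives no proof of this lemma, treating it as an elementary result, so there is nothing to compare against; your union-bound for (i) and truncation for (ii) are exactly what is expected.
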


We end this section with a result concerning $\bb E \ul{G}$ and $\bb E \ul{G^2}$, and the proof is postponed to Appendix \ref{TB}.
\begin{lemma} \label{Tlem3.9}
	We adopt the assumptions of Lemma \ref{prop4.4}. Let 
	\begin{equation} \label{T4.2}
	\chi \deq \frac{1}{2}\min\{\alpha,1-\alpha\}>0\,.
	\end{equation}
	We have
	\begin{equation} \label{T3.16}
	\bb E \ul{G^2} =O_{\prec}( N^{\alpha-\chi})
	\end{equation}
	as well as
	\begin{equation} \label{T3.17}
	\bb E \ul{G}-m =O_{\prec} (N^{\alpha-1-\chi}) \,.
	\end{equation}
\end{lemma}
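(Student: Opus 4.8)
The plan is to derive approximate self-consistent equations for $\bb E\ul G$ and $\bb E\ul{G^2}$ via the cumulant expansion of Lemma \ref{lem:3.1}, and to read off the two bounds using the stability of the semicircle relation \eqref{T3.3} in the bulk together with the a priori estimates of Lemmas \ref{prop4.4} and \ref{Tlem3.8}. Since $\ul{G^2}=-\partial_z\ul G$, once \eqref{T3.17} is known uniformly for $z'$ in a small disc around $z$ the bound \eqref{T3.16} will follow by a Cauchy estimate, so the core of the argument is \eqref{T3.17}.

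For \eqref{T3.17} I would start from $\ul{HG}=1+z\ul G$, take expectations, and expand $\bb E[H_{ij}G_{ji}]$ for each $i\ne j$ by Lemma \ref{lem:3.1} with $h=H_{ij}$, $f(h,\bar h)=G_{ji}$, using $f^{(1,0)}=-(G_{ji})^2$ and $f^{(0,1)}=-G_{ii}G_{jj}$. Since $\mathcal C_{1,1}(H_{ij})=N^{-1}$ and $\mathcal C_{2,0}(H_{ij})=\sigma N^{-1}$ (Lemma \ref{Tlemh}), the first-order terms sum to $-\bb E[(\ul G)^2]$ plus the $\sigma$-dependent contribution $-\tfrac\sigma N\bb E[\ul{GG^\intercal}]$, while every other term carries a cumulant of order $\ge 3$ (a factor $N^{-3/2}$ or smaller) or is the remainder $R_\ell$. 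Writing $\bb E[(\ul G)^2]=(\bb E\ul G)^2+\bb E[\langle\ul G\rangle^2]$ this gives
\begin{equation*}
(\bb E\ul G)^2+z\,\bb E\ul G+1=-\bb E[\langle\ul G\rangle^2]-\tfrac\sigma N\bb E[\ul{GG^\intercal}]+\mathcal E .
\end{equation*}
Here $\bb E[\langle\ul G\rangle^2]=O_\prec(N^{2\alpha-2})=O_\prec(N^{\alpha-1-\chi})$ by \eqref{Tk} (case $k=1$), Lemma \ref{prop_prec}(ii) and $\chi\le1-\alpha$; and $\mathcal E$, a sum of $\tfrac1N\sum_{i,j}$ of products of boundedly many entries of matrices from $\cal G$, is estimated termwise by inserting \eqref{410} — crucially the off-diagonal gain $N^{(\alpha-1)/2}$ — and Lemma \ref{Tlem3.8}, together with Lemma \ref{prop_prec}; taking $\ell$ large so that $R_\ell$ is negligible by \eqref{T3.4}, one obtains $\mathcal E=O_\prec(N^{\alpha-1-\chi})$, the threshold $\chi=\tfrac12\min\{\alpha,1-\alpha\}$ arising from balancing the $\mathcal C_{2,1}$-type contributions (one off-diagonal entry times $N^{-3/2}$) against the variance term.

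The step I expect to be the main obstacle is the new term $-\tfrac\sigma N\bb E[\ul{GG^\intercal}]$, which has no counterpart in the classes $\sigma\in\{0,1\}$ of \cite{HK} (for $\sigma=1$ it becomes $\tfrac1N\bb E\ul{G^2}$, for $\sigma=0$ it vanishes): the Ward-type bound $|\ul{GG^\intercal}|\le\ul{GG^*}=\eta^{-1}\im\ul G\prec N^\alpha$ only yields $O_\prec(N^{\alpha-1})$. To do better one would run the parallel expansion of $\bb E[\ul{HGG^\intercal}]=\bb E\ul G+z\,\bb E[\ul{GG^\intercal}]$ to obtain a self-consistent relation for $\bb E[\ul{GG^\intercal}]$ — whose derivatives bring in $\ul{GG^\intercal G}$, $\ul{GG^*}$, $\ul{G\ol G}$ and similar two-resolvent traces, so one must close a small family of such equations — solve it by the stability of the relevant denominator $z+(1+\sigma)\bb E\ul G$, and feed the result back into the equation for $\bb E\ul G$. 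Controlling this coupled system is where the real work lies.

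Finally, with $\bb E\ul G=m+O_\prec(N^{\alpha-1})$ from Theorem \ref{refthm1} (and Lemma \ref{prop_prec}(ii)), the displayed equation factorizes as $(\bb E\ul G-m)(\bb E\ul G+m+z)=O_\prec(N^{\alpha-1-\chi})$; since $\bb E\ul G+m+z=2m+z+O_\prec(N^{\alpha-1})$ and $|2m+z|=|m-m^{-1}|=\sqrt{4-E^2}\asymp1$ for fixed $E\in(-2,2)$ by \eqref{T3.3}, dividing gives \eqref{T3.17}. Running this uniformly for $z'$ with $|z'-z|\le\eta/2$ (which stays in the bulk), and using $\ul{G^2}=-\partial_z\ul G$ with $|m'(z)|\asymp(4-E^2)^{-1/2}\asymp1$, a Cauchy estimate gives $\bb E\ul{G^2}=-m'(z)+O_\prec(\eta^{-1}N^{\alpha-1-\chi})=O_\prec\!\big(1+N^{2\alpha-1-\chi}\big)=O_\prec(N^{\alpha-\chi})$, which is \eqref{T3.16}.
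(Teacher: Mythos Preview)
Your proposal is correct in spirit and uses the same machinery (cumulant expansion, self-consistent equations, stability in the bulk) as the paper, but the paper organizes the argument in the reverse order and thereby avoids the ``coupled system'' you anticipate.

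The paper proves \eqref{T3.16} \emph{first}, by writing a self-consistent equation directly for $\bb E\,\ul{G^2}$ (equation \eqref{eqn: 2.59} in Appendix~\ref{TB}):
\[
\bb E\,\ul{G^2}=\frac{1}{T}\Big(\bb E\,\ul{G}+2\,\bb E\langle\ul{G}\rangle\langle\ul{G^2}\rangle+\tfrac{2\sigma}{N}\bb E\,\ul{G^2G^\intercal}-K^{(5)}-L^{(5)}\Big),\qquad T=-z-2\,\bb E\,\ul{G}.
\]
The crucial point is that the denominator $T\approx -(z+2m)$ satisfies $|T^{-1}|=O(1)$ in the bulk \emph{regardless of $\sigma$}, and every term in the bracket is $O_\prec(N^{\alpha-\chi})$ straight from Lemmas~\ref{prop4.4}--\ref{prop_prec} (in particular $\tfrac{1}{N}\bb E\,\ul{G^2G^\intercal}=O_\prec(N^{2\alpha-1})\le O_\prec(N^{\alpha-\chi})$ by Lemma~\ref{Tlem3.8}). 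So \eqref{T3.16} falls out with no auxiliary equation.

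Then \eqref{T3.17} follows from the self-consistent equation \eqref{3.76} for $\bb E\,\ul{G}$, where the $\sigma$-term $\tfrac{\sigma}{N}\bb E\,\ul{G^2}$ is bounded by $O_\prec(N^{\alpha-1-\chi})$ simply by quoting the already-proven \eqref{T3.16}. (You are right that this term is really $\tfrac{\sigma}{N}\bb E\,\ul{GG^\intercal}$; the paper's bound on $\bb E\,\ul{G^2}$ and an entirely analogous argument for $\bb E\,\ul{GG^\intercal}$---essentially Lemma~\ref{Tlem4.5}(iii) with $F=G$---give the same estimate.) No coupled family of two-resolvent equations is needed at this stage.

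Your route---first \eqref{T3.17} via an auxiliary equation for $\bb E\,\ul{GG^\intercal}$ with denominator $z+(1+\sigma)\bb E\,\ul{G}$, then \eqref{T3.16} by the Cauchy estimate $\bb E\,\ul{G^2}=-\partial_z\bb E\,\ul{G}$---also works, and the Cauchy step is a neat shortcut. But it trades one direct self-consistent equation (with a $\sigma$-independent stable denominator) for a $\sigma$-dependent one, which is precisely the stability issue the paper defers to Lemma~\ref{Tlem4.5}(iii). The paper's ordering is the more economical choice here.
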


\section{Outline of the proof} \label{Tsec3}

Let us denote $\ul{M}=N^{-1}\tr M$ for an $N\times N$ matrix $M$.

We prove the convergence of $\hat{Z}(f)$ in the sense of moments. As a starting point, we use an approximate Cauchy formula to reduce the problem to the case of the Green function $f(x)=(x-\ii)^{-1}$. The main work thus lies in computing $\bb E|\ul{G}-\bb E \ul{G} |^{2p}$, namely the centered moments of the Green function $G\deq(H-z)^{-1}$. Here $z\deq E+\ii \eta$, and $E,\eta$ are as in Theorem \ref{mainthm2}.  

In a second step, we use resolvent identity $zG=-I+GH$ to extract centered random variables $H_{ij}$ from the expression, and by cumulant expansion formula 
\begin{equation} \label{Tsimcumu}
\bb E f(h,\bar{h})h = \bb E |h|^2 \cdot \bb E \frac{\partial f(h,\bar{h})}{\partial \bar{h}}+\bb E h^2 \cdot \bb E \frac{\partial f(h,\bar{h})}{\partial h}+\cdots
\end{equation}
for centered random variables $h$, we get a preliminary self-consistent equation
\begin{equation} \label{T3.11}
\bb E|\ul{G}-\bb E \ul{G} |^{2p}=\frac{1}{z+2\bb E \ul{G}}\Big[-\frac{p}{N^2} \Big(\bb E \ul{G^{*2}G}+\sigma\bb E \ul{\ol{G}^2G}\Big) \bb E |\ul{G}-\bb E \ul{G}  |^{2p-2}+ \cal E_1\Big]\,,
\end{equation}
where the first term on RHS is the main term, and $\cal E_1$ denotes the error term.

In a third step, we analyse the terms in \eqref{T3.11}. For $z$ in our domain of interest, $(z+2\bb E \ul{G})^{-1}$ is bounded, and we only need to calculate 
\begin{equation} \label{++}
\bb E \ul{G^{*2}G}+\sigma\bb E \ul{\ol{G}^2G}
\end{equation}
to understand the main term in \eqref{T3.11}. The first term in \eqref{++} is easy, one just needs to apply the resolvent identity $G^{*}G=(G^{*}-G)/(\bar{z}-z)$ twice and use the local semicircle law Theorem \ref{refthm1}. For the second term in \eqref{++}, there are two extreme cases where $\sigma\bb E \ul{\ol{G}^2G}$ is only affected by one symmetry class, and the calculation is trivial. When $\sigma=1$, $H$ is in the same symmetric class as GOE, and it is real and symmetric. This makes $G$ symmetric, thus $\sigma \bb E \ul{\ol{G}^2G}=\bb E \ul{G^{*2}G}$. When $\sigma=0$, $H$ falls into the same symmetric class as GUE, and one simply has $\sigma\bb E \ul{\ol{G}^2G}=0$. In the intermediate case $\sigma\in (0,1)$,  $\sigma \bb E \ul{\ol{G}^2G}$ is the crucial term that reveals the mixed impact of both symmetry classes, and the computation is much more subtle. For example, a straight-forward calculation using
\begin{equation} \label{Teee}
z \bb E \ul{\ol{G}^2G}=-\bb E \ul{G^2} +\frac{1}{N}\sum_{i,j}\bb E (\ol{G}^2G)_{ij}H_{ji}
\end{equation}
and \eqref{Tsimcumu} leads to the self-consistent equation
\begin{equation}
\bb E \ul{\ol{G}^2G}= \frac{1}{z+\bb E \ul{G}+\sigma\bb E \ul{\ol{G}}} \big(-\bb E \ul{\ol{G}^2}+\cal E_2\big)
\end{equation}
where $\cal E_2$ denotes the error term. As $\sigma\to 1$, the true size of $\bb E \ul{\ol{G}^2G}$ is $O(\eta^{-2})$, and we are able to prove $\cal E_2=O(\eta^{-2}N^{-\epsilon})$ for some small $\epsilon>0$. However, by the local semicircle law, one has $(z+\bb E \ul{G}+\sigma\bb E \ul{\ol{G}})^{-1}\sim (1-\sigma+\eta)^{-1}$, thus the error term $(z+\bb E \ul{G}+\sigma\bb E \ul{\ol{G}})^{-1} \cal E_2$ blows up at size $O((1-\sigma+\eta)^{-1}\cdot \eta^{-2}N^{-\epsilon})=O(\eta^{-3}N^{-\epsilon})\gg \eta^{-2}$. To deal with the instability of the operator $(z+\bb E \ul{G}+\sigma\bb E \ul{\ol{G}})^{-1}$, we observe that a swap of summation indices gives
\begin{equation} \label{Tfff}
\bar{z} \bb E \ul{\ol{G}^2G}=-\bb E \ul{\ol{G}G}+\frac{1}{N}\sum_{i,j} \bb E \ol{H}_{ij} (\ol{G}^2G)_{ji}=-\bb E \ul{\ol{G}G}+\frac{1}{N} \sum_{i,j}\bb E  (\ol{G}^2G)_{ij} \ol{H}_{ji}\,,
\end{equation}
and this leaves the last summations in \eqref{Teee} and \eqref{Tfff} with exactly the same Green functions, while the $H$ entries are complex conjugates. Also, by \eqref{Tsimcumu} we have
\begin{equation} \label{IG}
\bb E f(h,\bar{h})(h-\bar{h}) = (\bb E |h|^2-\bb E \bar{h}^2) \cdot \bb E \frac{\partial f(h,\bar{h})}{\partial \bar{h}}+(\bb E h^2-\bb E |h|^2) \cdot \bb E \frac{\partial f(h,\bar{h})}{\partial h}+\cdots\,,
\end{equation}
and note that $\bb E |H_{ji}|^2-\bb E (\ol{H}_{ji})^2=1-\sigma$. Thus subtracting \eqref{Tfff} from \eqref{Teee} provides a crucial cancellation, together with \eqref{IG} yield
\begin{equation}
\bb E \ul{\ol{G}^2G}= (z-\bar{z}+(1-\sigma)(\bb E \ul{G}-\bb E \ul{\ol{G}}))^{-1} \big(-\bb E \ul{\ol{G}G}+(1-\sigma)\cal E_3\big)\,,
\end{equation} 
where $\cal E_3$ is an error term satisfying $\cal E_3=O(\eta^{-2}N^{-\epsilon})$, and $(z-\bar{z}+(1-\sigma)(\bb E \ul{G}-\bb E \ul{\ol{G}}))^{-1}\sim (1-\sigma+\eta))^{-1}$ by the local semicircle law. This implies
$(z-\bar{z}+(1-\sigma)(\bb E \ul{G}-\bb E \ul{\ol{G}}))^{-1}(1-\sigma)\cal E_3=O(\eta^{-2}N^{-\epsilon})
\ll \eta^{-2}$ as desired. The error term $\cal E_1$ in \eqref{T3.11} can be analysed in a similar fashion. 
	
\section{Proof of the main result} \label{sec4}
Let us abbreviate $f_{\eta}(x)\deq f\big(\frac{x-E}{\eta}\big)$, and denote $[\tr f_{\eta}(H)]\deq \tr f_{\eta}(H)- N \int_{-2}^{2} \varrho(x) f_{\eta}(x) \, \mathrm{d}x$. In this section we prove the following particular case of Theorem \ref{mainthm2}.
\begin{theorem} \label{prop:4.2}
	Let $H$ be a Wigner matrix satisfying Definition \ref{def:dWigner}. Let $\eta,E,\mu_*$ be as in Theorem \ref{mainthm2}. Let $f\in C_c^{\infty}(\bb R)$. Then
	\begin{equation*}
[  \tr f_{\eta}(H)] \overset{d}{\longrightarrow} \mathcal{N}\Big(0\,,\,V_{\mu_*}(f,f)\,\Big)
\end{equation*}
as $N \to \infty$, where $V_{\mu_*}(f,f)$ is defined as in \eqref{variance}. The convergence also holds in the sense of moments.
\end{theorem}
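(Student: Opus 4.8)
The plan is to prove convergence in the sense of moments, which for the Gaussian limit amounts to showing that $\bE\big([\tr f_{\eta}(H)]\big)^{p}$ satisfies, asymptotically as $N\to\infty$, the moment recursion characterising $\cal N\big(0,V_{\mu_*}(f,f)\big)$ (odd moments vanishing, even ones obeying $\bE\xi^{2p}=(2p-1)(\bE\xi^{2})\,\bE\xi^{2p-2}$). The first step is to reduce to resolvents. For the Green function $f(x)=(x-\ii)^{-1}$ one has simply $[\tr f_{\eta}(H)]=N\eta\,\langle\ul{G(z)}\rangle$ with $z=E+\ii\eta$; for a general $f\in C_c^{\infty}(\bR)$ I would invoke Lemma \ref{lem5.1} with $k$ large to write $f\big(\tfrac{\lambda-E}{\eta}\big)=\tfrac{\ii}{2\pi}\oint_{\partial D_a}\tfrac{\tilde f(\zeta)}{(\lambda-E)/\eta-\zeta}\,\dd\zeta$ plus a double-integral remainder whose integrand is $O(|\im\zeta|^{k})$; taking normalized traces, rescaling $z=E+\eta\zeta$, and controlling the remainder by Lemma \ref{prop4.4}, this expresses $[\tr f_{\eta}(H)]$ as a contour integral of $N\eta\,\langle\ul{G(z)}\rangle$ over spectral parameters $z$ at mesoscopic height near $E$, up to a negligible error. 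The problem thus reduces to the joint moments of $\langle\ul{G(z)}\rangle\deq\ul{G(z)}-\bE\ul{G(z)}$; I will describe the core one-parameter case $\bE\big|\ul G-\bE\ul G\big|^{2p}$, the several-parameter version being entirely analogous.

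The second step is the self-consistent equation \eqref{T3.11}. Using the resolvent identity $z\ul G=-1+\ul{GH}$, so that $z\langle\ul G\rangle=\langle\ul{GH}\rangle$, I would multiply by $\langle\ul G\rangle^{p-1}\overline{\langle\ul G\rangle}^{\,p}$, take expectations, and expand each entry $H_{ji}$ in $\ul{GH}=\tfrac1N\sum_{ij}G_{ij}H_{ji}$ by the complex cumulant expansion of Lemma \ref{lem:3.1}, truncating at a fixed order by Lemma \ref{Tlemh}. The relevant second-order cumulants are $\cal C_{1,1}(H_{ij})=1/N$ and $\cal C_{2,0}(H_{ij})=\sigma/N$. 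Differentiating the explicit resolvent entry $G_{ij}$ produces, after the decoupling of Lemma \ref{Tlemc}, the term $-2\bE\ul G\cdot\bE|\langle\ul G\rangle|^{2p}$ on the right-hand side, whence the stable prefactor $(z+2\bE\ul G)^{-1}$; differentiating one of the $p$ conjugated factors $\overline{\langle\ul G\rangle}$ produces the pairing term $-\tfrac{p}{N^2}\big(\bE\ul{G^{*2}G}+\sigma\bE\ul{\ol{G}^2G}\big)\bE|\langle\ul G\rangle|^{2p-2}$; the contributions from differentiating a non-conjugated factor, from higher cumulants, from off-diagonal entries of $G$, and from the centering remainders of Lemma \ref{Tlemc} are all collected into $\cal E_1$ and bounded by Lemmas \ref{prop4.4}, \ref{Tlem3.8} and \ref{Tlem3.9}. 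This is \eqref{T3.11}.

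The third and decisive step is to evaluate $\bE\ul{G^{*2}G}+\sigma\bE\ul{\ol{G}^2G}$. The first summand is elementary: two applications of $G^{*}G=(G^{*}-G)/(\bar z-z)$ reduce $\ul{G^{*2}G}$ to a combination of $\ul{G^{*2}}$, $\ul{G^{2}}$, $\ul{G^{*}}$ and $\ul G$, controlled by the local law (Theorem \ref{refthm1}) and Lemma \ref{Tlem3.9}. The second summand $\bE\ul{\ol{G}^2G}$, handled by Lemma \ref{Tlem4.5}, is where the difficulty lies and is the main obstacle. The naive route, from $z\ul{\ol{G}^2G}=-\ul{\ol{G}^2}+\tfrac1N\sum_{ij}(\ol{G}^2G)_{ij}H_{ji}$ and the standard cumulant expansion, yields a self-consistent equation with prefactor $(z+\bE\ul G+\sigma\bE\ul{\ol G})^{-1}$, of size $(1-\sigma+\eta)^{-1}$, which is too large to absorb the $O_{\prec}(\eta^{-2}N^{-\chi})$ error (with $\chi$ as in \eqref{T4.2}) once $1-\sigma\gtrsim\eta$. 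The remedy is to also write the conjugate-index identity $\bar z\,\ul{\ol{G}^2G}=-\ul{\ol{G}G}+\tfrac1N\sum_{ij}(\ol{G}^2G)_{ij}\ol H_{ji}$, which has exactly the same Green-function structure but with $H_{ji}$ replaced by $\ol H_{ji}$, and to subtract it; by the algebraic twist of the cumulant formula (Lemma \ref{Tlem5.1}, the expansion of $\bE[f(h,\bar h)(h-\bar h)]$), the effective second-order coefficient becomes $\bE|\sqrt N H_{ij}|^2-\bE(\sqrt N\ol H_{ij})^2=1-\sigma$, and one gets
\begin{equation*}
\bE\ul{\ol{G}^2G}=\big(z-\bar z+(1-\sigma)(\bE\ul G-\bE\ul{\ol G})\big)^{-1}\big(-\bE\ul{\ol{G}G}+(1-\sigma)\cal E_3\big),\qquad\cal E_3=O_{\prec}(\eta^{-2}N^{-\chi}).
\end{equation*}
The prefactor is still of order $(1-\sigma+\eta)^{-1}$, but it now multiplies $(1-\sigma)\cal E_3$, and $(1-\sigma)/(1-\sigma+\eta)\le1$ restores stability, giving $\bE\ul{\ol{G}^2G}=O_{\prec}(\eta^{-2})$ together with its explicit leading term; the error $\cal E_1$ of \eqref{T3.11} is treated by the same conjugation-and-subtraction device.

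Finally I would substitute the evaluated coefficient into \eqref{T3.11}, use $(z+2\bE\ul G)^{-1}$ and $m(z)$ in their semicircle-law form and the scaling hypothesis $\sqrt{4-E^2}(1-\sigma)/\eta\to\mu_*$ to read off the limiting value of $\bE|\langle\ul{G(z)}\rangle|^{2}$ (and of the analogous two-parameter covariances), iterate the recursion down to $p=0$ — the odd moments vanishing because their analogue of \eqref{T3.11} carries no leading term — and undo the first-step reduction: standard contour manipulations turn the double contour integral of the limiting covariance of $\langle\ul G\rangle$ against $\tilde f$ into the explicit double real integral of \eqref{variance}, yielding $V_{\mu_*}(f,f)$; matching with the Gaussian moment recursion finishes the proof. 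The crux throughout is the evaluation of $\bE\ul{\ol{G}^2G}$ uniformly in $\sigma\in[-1,1]$: the operator $(z+\bE\ul G+\sigma\bE\ul{\ol G})^{-1}$ degenerates precisely in the transitional window $1-\sigma\sim\eta$, the ordinary cumulant expansion does not expose the cancellation that tames it, and it is the imaginary-part expansion of Lemma \ref{Tlem5.1} — which manufactures the compensating factor $1-\sigma$ — that makes the argument go through; verifying that every error term genuinely carries this factor (or is otherwise $o(\eta^{-2})$) is the delicate bookkeeping at the heart of the proof.
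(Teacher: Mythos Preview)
Your proposal tracks the paper's strategy closely: reduction to resolvents via the approximate Cauchy identity (Lemma \ref{lem5.1}), derivation of a self-consistent moment recursion through the cumulant expansion, and the decisive evaluation of $\bE\,\ul{\ol{G}^{2}G}$ via the subtraction device based on Lemma \ref{Tlem5.1}. Your diagnosis of why the naive expansion produces an unstable prefactor $(z+\bE\ul G+\sigma\bE\ul{\ol G})^{-1}$ and why the imaginary-part expansion manufactures the compensating factor $1-\sigma$ is exactly the paper's point; this is indeed the heart of the argument (Lemma \ref{Tlem4.5}(i),(ii) and Section \ref{Tsec5}).

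There is one genuine gap. You write ``$[\tr f_{\eta}(H)]=N\eta\,\langle\ul{G(z)}\rangle$'' and then work throughout with $\langle\ul{G(z)}\rangle=\ul{G(z)}-\bE\ul{G(z)}$, but $[\tr f_{\eta}(H)]$ is centred at the \emph{semicircle} integral, not at the expectation: for $f(x)=(x-\ii)^{-1}$ one has $[\tr f_{\eta}(H)]=N\eta\bigl(\ul{G(z)}-m(z)\bigr)$, which differs from $N\eta\langle\ul{G(z)}\rangle$ by the deterministic shift $N\eta\bigl(\bE\ul{G(z)}-m(z)\bigr)$. Your moment recursion is really a statement about $\langle\tr f_{\eta}(H)\rangle$; to transfer it to $[\tr f_{\eta}(H)]$ and in particular to justify that odd moments vanish, you must show separately that $\bE[\tr f_{\eta}(H)]=o(1)$. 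This is Proposition \ref{Tlem4.3}(ii) in the paper. The local-law bound $\bE\ul G-m=O_{\prec}((N\eta)^{-1})$ is not enough (after multiplying by $N\eta$ it gives only $O_{\prec}(1)$); one needs the sharper estimate $\bE\ul G-m=O_{\prec}(N^{\alpha-1-\chi})$ of Lemma \ref{Tlem3.9}, itself obtained by a further pass of the cumulant expansion on $\bE\ul G$. You cite Lemma \ref{Tlem3.9} for error control inside the recursion, but you have not invoked it to kill the mean, and without that step the limiting Gaussian has not been shown to be centred.
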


\begin{remark} \label{rmk5.2}
Our main work in the remaining of this paper will be showing
Theorem \ref{prop:4.2}. It is weaker than Theorem \ref{mainthm2} in three senses.
\begin{enumerate}
	\item[(A)] We only have 1-dimensional convergence of $[  \tr f_{\eta}(H)]$, instead of the joint convergence of $\hat{Z}(f)$.
	
	\item[(B)] The entries of $H$ satisfy stronger moment conditions, i.e. $\bb E|\sqrt{N}H_{ij}| \leq C_p$ for all $p$, instead of $\bb E|\sqrt{N}H_{ij}|^{4-2\delta_{ij}} \leq C$.
	
	\item[(C)] The test functions are in the class $C^{\infty}_c(\bb R)$ rather than in $C^{1,r,s}(\bb R)$.
\end{enumerate}
We make the above simplifications to focus on our main issue, which is going from $\sigma \in \{0,1\}$ to $\sigma \in [-1,1]$ in Definition \ref{def:Wigner}. The methods removing constrains (A) - (C) have been described in \cite{HK} for $\sigma \in \{0,1\}$. More precisely, in Section 5.5 of \cite{HK} we showed that the joint convergence can be proved in exactly the same way as the 1-dimensional convergence; in Section 6 of \cite{HK} we used Green function comparison argument to relax the moment conditions of $H$; in Section 5 of \cite{HK} we uses very careful cut-off of integration regions and estimates so that the error terms are small for test functions in the class $C^{1,r,s}(\bb R)$ (this was also explained just before Section 5.1 of \cite{HK}). One readily checks that these methods are completely insensitive to the value of $\sigma$, hence for conciseness and readability we only give a proof of Theorem \ref{prop:4.2} in this paper.
\end{remark}
 Recall that for a random variable $X$, $\langle X \rangle\deq X-\bE X$. Theorem \ref{prop:4.2} is a direct consequence of the following proposition.
\begin{proposition} \label{Tlem4.3}
	Let us adopt the conditions of Theorem \ref{prop:4.2}, and define $\mu\deq\sqrt{4-E^2}(1-\sigma)/\eta$. Let $\chi$ be as in \eqref{T4.2}. We have the following results.
	\begin{enumerate}
		\item For any fixed $n \ge 2 $
		\begin{equation} \label{wick}
		 \bE \langle \tr f_{\eta}(H)\rangle^n=(n-1)\,V_{\mu}(f,f) \cdot  \bE \langle \tr f_{\eta}(H)\rangle^{n-2}+O_{\prec}(N^{-\frac{\chi}{2n}})\,,
		\end{equation}
		where $V_{\mu}(f,f)$ is defined as in \eqref{variance}.
		\item The random variables $[  \tr f_{\eta}(H)]$ and $ \langle \tr f_{\eta}(H)\rangle$ are close in the sense that
		\begin{equation} \label{compare}
		[  \tr f_{\eta}(H)]-\langle \tr f_{\eta}(H)\rangle=\bE 	[  \tr f_{\eta}(H)]=O_{\prec}(N^{-{\chi}/2})\,.
		\end{equation}		 
	\end{enumerate} 
\end{proposition}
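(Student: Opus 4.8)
The plan is to reduce Proposition~\ref{Tlem4.3} to a statement purely about centered moments of the Green function $\ul G$, and then to prove the corresponding recursion for those moments. First, using Lemma~\ref{lem5.1} (the approximate Cauchy formula) applied to $f \in C^\infty_c(\bR)$, one writes $\tr f_\eta(H)$ as a contour-plus-area integral of $\ul{G(z)}$ against the almost-analytic extension $\tilde f$. Because $\partial_{\bar z}\tilde f$ vanishes to order $k$ on the real axis, the area integral is suppressed, and the dominant contribution comes from $z$ with $\im z \asymp \eta$. Thus it suffices to control $\bE \langle \ul{G(z_1)} \rangle \cdots \langle \ul{G(z_n)} \rangle$ for such $z_i$, and to obtain a Wick-type recursion for these quantities with an explicit pair covariance that integrates (against $\tilde f$) to $V_\mu(f,f)$. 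Part~(ii), the statement that $\bE[\tr f_\eta(H)] = O_\prec(N^{-\chi/2})$, then follows from the same reduction together with the estimate $\bE\ul G - m = O_\prec(N^{\alpha-1-\chi})$ from Lemma~\ref{Tlem3.9}, integrated against $\tilde f$.

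The heart of the argument is the recursion \eqref{wick}, and here I would follow the outline in Section~\ref{Tsec3}. Start from the resolvent identity $zG = -I + GH$ to write $(z + 2\bE\ul G)\,\bE \langle \ul G\rangle \langle\ul G\rangle^{n-1}$ in terms of expressions $\frac1N\sum_{ij}\bE (\cdots)_{ij} H_{ji} \langle\ul G\rangle^{n-1}$, then apply the complex cumulant expansion (Lemma~\ref{lem:3.1}) to each such $H$-insertion. The second-order cumulants produce the leading terms: one contribution in which the derivative hits another factor $\langle\ul G\rangle$, which after resumming gives the $(n-1)V_\mu(f,f)$ term (each of the $n-1$ remaining factors can be "paired"), and one contribution in which the derivative hits within the same resolvent chain, which feeds the self-consistent equation and produces the factor $\frac1{z+2\bE\ul G}\big(\bE\ul{G^{*2}G} + \sigma \bE\ul{\ol G^2 G}\big)$ as in \eqref{T3.11}. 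The higher cumulants, and the terms where derivatives act on several different $\langle\ul G\rangle$ factors at once, are error terms; they are bounded using Lemmas~\ref{prop4.4}, \ref{Tlem3.8}, \ref{prop_prec} together with the cumulant bounds of Lemma~\ref{Tlemh}, and the combinatorial gain $N^{-1}$ per extra off-diagonal resolvent entry is what yields the $N^{-\chi/(2n)}$ error after also using a standard "self-improving" bootstrap on $\bE|\langle\ul G\rangle|^{2p}$.

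The main obstacle — exactly as flagged in Section~\ref{Tsec3} — is evaluating $\bE\ul{G^{*2}G} + \sigma \bE\ul{\ol G^2 G}$ with sufficient precision uniformly in $\sigma$, and in particular controlling $\sigma\bE\ul{\ol G^2 G}$. The first term is routine: two applications of $G^*G = (G^*-G)/(\bar z - z)$ and the local law reduce it to $\im m$-type quantities. For the second term, the naive self-consistent equation has the unstable denominator $z + \bE\ul G + \sigma\bE\ul{\ol G} \sim 1 - \sigma + \eta$, against which a relative error of size $N^{-\epsilon}$ is not good enough when $1-\sigma$ is tiny. The fix is the algebraic twist of Section~\ref{Tsec3}: derive a second equation for $\bar z\,\bE\ul{\ol G^2 G}$ by relabeling the summation indices in \eqref{Tfff}, subtract it from \eqref{Teee}, and use the identity \eqref{IG} for $\bE f(h,\bar h)(h-\bar h)$ — which is the content of Lemma~\ref{Tlem5.1} — so that the coefficient $1-\sigma$ factors out of the error term while the denominator becomes $z - \bar z + (1-\sigma)(\bE\ul G - \bE\ul{\ol G}) \sim 1-\sigma+\eta$; the cancellation of the $(1-\sigma)$'s then keeps the final error at $O(\eta^{-2}N^{-\epsilon}) \ll \eta^{-2}$. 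Carrying this through cleanly, and checking that the resulting covariance reproduces precisely the kernel in \eqref{variance} with $\mu = \sqrt{4-E^2}(1-\sigma)/\eta$, is where essentially all the work lies; this is the content of Lemma~\ref{Tlem4.5}, whose proof is deferred to Section~\ref{Tsec5}.
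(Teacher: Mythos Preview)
Your proposal is correct and follows essentially the same route as the paper: reduction to Green-function moments via Lemma~\ref{lem5.1}, the resolvent-identity plus cumulant-expansion recursion leading to \eqref{T4.20}, the identification of the pairing term $\frac{1}{N^2T_n}\bE Q_{n-1}^{(k)}\big(\ul{G^2(z_k)G(z_n)}+\sigma\,\ul{G^2(z_k)G^{\intercal}(z_n)}\big)$, and its evaluation through Lemma~\ref{Tlem4.5} (whose proof uses the $h-\bar h$ expansion of Lemma~\ref{Tlem5.1}); part~(ii) likewise goes through Lemma~\ref{Tlem3.9}. One small wording slip: in your second paragraph the ``derivative hits another factor'' contribution and the expression $\frac{1}{z+2\bE\ul G}\big(\bE\ul{G^{*2}G}+\sigma\bE\ul{\ol G^2 G}\big)$ are not two separate mechanisms---the latter \emph{is} the pairing term (the ``same-chain'' derivative is what generates the denominator $T_n$), and in the full proof this pairing term appears with two distinct spectral parameters $z_k,z_n$ on the contour, which is why Lemma~\ref{Tlem4.5} is phrased with $G$ and $F$.
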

Assume Proposition \ref{Tlem4.3} holds. Then \eqref{T2.5}, \eqref{wick} and Wick's theorem imply
\begin{equation} \label{2.14.1}
\langle \tr f_{\eta}(H)\rangle \overset{d}{\longrightarrow} \mathcal{N}\Big(0\,,V_{\mu_*}(f,f)\Big)
\end{equation}
as $N \to \infty$. Note that the above result is proved in a stronger sense that we have convergence in moments. Theorem \ref{prop:4.2} then follows from (\ref{compare}).

\subsection{Proof of Proposition \ref{Tlem4.3} (i)}\label{Tsec4.1}
 Fix $n\ge 2$. In this section we calculate
 \begin{equation*}
 \bb E \langle \tr f_{\eta}(H) \rangle ^n\,.
 \end{equation*} 
 Let 
 \begin{equation} \label{T4.6}
 \gamma \deq\frac{\chi}{2n}\,, \ \ \ \omega \deq N^{-\gamma}\,,\ \ \mbox{ and }\ \ a\deq \eta\, \omega=N^{-(\alpha+\gamma)}\,,
 \end{equation}
 where $\chi$ is defined in \eqref{T4.2}. Note that our definition of $\gamma$ ensures $\alpha+\gamma \in (0,1)$. Applying Lemma \ref{lem5.1} with $a=\eta\,\omega$ and $k=n$ gives
  \begin{equation*}
 f_{\eta}(H)=\frac{\ii}{2\pi} \oint_{\partial D_a}\tilde f_{\eta}(z)G(z)\, \dd z+\frac{1}{\pi} \int_{D_a} \partial_{\bar{z}}\tilde f_{\eta}(z)G(z)\, \dd^2z\,,
 \end{equation*}
  thus 
  \begin{equation} \label{T4.7}
  \langle \tr f_{\eta}(H) \rangle=N\bigg(\frac{\ii}{2\pi} \oint_{\partial D_a}\tilde f_{\eta}(z)\langle \ul{G(z)}\rangle\, \dd z+\frac{1}{\pi} \int_{D_a} \partial_{\bar{z}}\tilde f_{\eta}(z)\langle \ul{G(z)}\rangle\, \dd^2z\bigg)\,.
  \end{equation} 
Let us write $z=x+\ii y$. By \eqref{3.4} and Lemma \ref{prop_prec} we know
\begin{equation*}
|\langle \ul{G(z)}\rangle|\leq |\ul{G(z)}-m|+|\bb E [\ul{G(z)}-m]|\prec \frac{1}{|Ny|}\,.
\end{equation*}
Note that $\partial_{\bar{z}}f_{\eta}(z)=\frac{1}{2k!}(\ii y)^kf^{(k+1)}_{\eta}(x)=\frac{1}{2n!}(\ii y)^nf^{(n+1)}_{\eta}(x)$ and $\int |f^{(n+1)}_{\eta}| \prec \eta^{-n}$, we have
\begin{equation} \label{T4.9}
\bigg|\frac{N}{\pi} \int_{D_a} \partial_{\bar{z}}\tilde f_{\eta}(z)\langle \ul{G}(z)\rangle\, \dd^2z\bigg|\prec \int_{D_a} \Big| y^kf^{(k+1)}_{\eta}(x) \frac{1}{y}\Big| \, \dd^2z \prec \frac{1}{\eta^n}\int_{-a}^a |y^{n-1}|\, \dd y \prec \frac{1}{\eta^n}\cdot(\eta\,\omega)^n=\omega^n\,.
\end{equation}
Note that $\partial D_a=\{z \in \bb C: \im z= \pm a\}$ and $\oint _{\partial D_a} |\tilde{f}_{\eta}(z)|\, \dd z=O(\eta)$, thus
\begin{equation} \label{T4.10}
\bigg|\frac{\ii N}{2\pi}\oint_{\partial D_a}\tilde f_{\eta}(z)\langle \ul{G(z)}\rangle\, \dd z\,\bigg| \prec  N\oint_{\partial D_a}\bigg|\tilde f_{\eta}(z)\frac{1}{Na}\bigg|\, \dd z \prec \frac{\eta}{a}=\frac{1}{\omega}\,.
\end{equation}
Then \eqref{T4.7}, \eqref{T4.9} and \eqref{T4.10} imply 
\begin{equation} \label{T4.11}
\bb E \langle \tr f_{\eta}(H) \rangle^n = \Big(\frac{\ii N}{2\pi}\Big)^n \oint_{(\partial D_a)^n} \tilde{f}_{\eta}(z_1)\cdots \tilde{f}_{\eta}(z_n)\, \bb E \langle \ul{G(z_1)} \rangle \cdots \langle \ul{G(z_n)} \rangle\, \dd z_1\cdots \dd z_n + O_{\prec}(\omega)\,,
\end{equation}
where we use $\oint_{(\partial D_a)^n}$ to denote $n$ multiples of $\int_{\partial D_a}$. The core of our proof lies in computing the first term on RHS of \eqref{T4.11}. Since $f$ has compact support and $\eta=N^{-\alpha}$, we know $\tilde{f}_{\eta}(z)=0$ whenever $\re z \notin (-2+\tau/2, 2-\tau/2)$, where $\tau>0$ is given in the beginning of Theorem \ref{mainthm2}. Thus by writing $z_k=x_k+\ii y_k$ for $k=1,2,...,n$, when only consider when $(z_1,...,z_n)$ lies in the region
\begin{equation} \label{T414}
A \deq \{(z_1,...,z_n)\in \bC^n: x_k \in [-2+\tau/2,2-\tau/2], y_k=\pm a\,, \mbox{for }k=1,2,...,n\}\,.
\end{equation}
Let us abbreviate 
$$
Q_m\deq \langle \ul{G(z_1)} \rangle \cdots \langle \ul{G(z_m)} \rangle\ \mbox{ and }\ Q_m^{(k)}\deq Q_m/\langle \ul{G(z_k)} \rangle 
$$
for all $1 \le k \le m \le n$. Let $\zeta_i\deq \bE |\sqrt{N}H_{ii}|^2$, and note that (iii) in Definition \ref{def:dWigner} implies $\max_i \zeta_i \le C$. By resolvent identity $zG=GH-I$ we have
\begin{equation} \label{T4.15}
z_n\bb E Q_n=\bb E \big[\langle Q_{n-1}\rangle \ul{G(z_n)H}\,\big]=\frac{1}{N}\sum_{i,j} \bb E \big[ \langle Q_{n-1} \rangle  G_{ij}(z_n)H_{ji}\big]\,.
\end{equation}
Since $H$ is complex hermitian, for any differentiable $f=f(H)$ we set
\begin{equation} \label{diff2}
\frac{\partial }{\partial H_{ij}}f(H)\deq \frac{\mathrm{d}}{\mathrm{d}t}\Big{|}_{t=0} f\pb{H+t\,{\Delta}^{(ij)}}\,,
\end{equation}
where ${\Delta}^{(ij)}$ denotes the matrix whose entries are zero everywhere except at the site $(i,j)$ where it is one: ${\Delta}^{(ij)}_{kl} =\delta_{ik}\delta_{jl}$. Applying Lemma \ref{5.16} with $f=f_{ij}(H)=\langle Q_{n-1} \rangle G_{ij}(z_n)$ and $h=H_{ji}$ to the above, we get
\begin{equation} \label{3.12} 
\begin{aligned} 
z\bE  Q_n &= \frac{1}{N^2} \sum\limits_{i,j} \bE\frac{\partial( \langle Q_{n-1} \rangle \cdot G_{ij}(z_n))}{\partial H_{ij}}+ \frac{\sigma}{N^2} \sum\limits_{i,j} \bE\frac{\partial( \langle Q_{n-1} \rangle \cdot G_{ij}(z_n))}{\partial H_{ji}} + K + L \\
&\eqd  (a)+(b)+K+ L\,,
\end{aligned} 
\end{equation}
where 
\begin{equation} \label{T3.13}
K=\frac{1}{N^2} \sum\limits_{i} \bE\frac{\partial( \langle Q_{n-1} \rangle  G_{ii}(z_n))}{\partial H_{ii}}(\zeta_i-1-\sigma)\,,
\end{equation}
and
\begin{equation} \label{T3.14} 
L=N^{-1} \cdot \sum\limits_{i,j}\Bigg[\sum_{k=2}^{\ell}\sum_{\substack{p,q \ge 0,\\p+q=k}}\frac{1}{p!\,q!}\mathcal{C}_{p+1,q}(H_{ji})\bE\frac{\partial^{p+q}( \langle Q_{n-1} \rangle G_{ij}(z_n))}{ \partial {H_{ji}}^p \partial {H_{ij}}^q}+R_{\ell+1}^{(ji)}\Bigg].  
\end{equation}
Here $\ell$ is a fixed positive integer to be chosen later, and $R_{\ell+1}^{(ji)}$ is a remainder term defined analogously to $R_{\ell+1}$ in (\ref{T3.4}). More precisely, we have the bound
\begin{equation} \label{tau}
\begin{aligned}	
R_{\ell+1}^{(ji)}&=O(1) \cdot \bE \big|H_{ji}^{\ell+2}\mathbf{1}_{\{|H_{ji}|>t\}}\big|\cdot  \max_{p+q=\ell+1}\big\| \partial_{ji}^{p}\partial_{ij}^qf_{ij}\big(H\big)\big\|_{\infty}\\&\ \ +O(1) \cdot \bE \big|H_{ji}^{\ell+2}\big| \cdot   \bE \max_{p+q=\ell+1} \big\|\partial_{ji}^{p}\partial_{ij}^qf_{ij}\big(\hat{H}+z\Delta^{(ji)}+\bar{z}\Delta^{(ij)}\big)\cdot \b1_{|x| \le t}\big\|_{\infty}
\end{aligned}
\end{equation}
for any $t>0$, where we define $\hat{H}\deq H-H_{ji}\Delta^{(ji)}- H_{ij}\Delta^{(ij)}$, so that the matrix $\hat{H}$ has zero entries at the positions $(i,j)$ and $(j,i)$, and abbreviate $\partial_{ij} \deq \frac{\partial}{\partial H_{ij}}$.
Note that for $G=(H-z)^{-1}$ we have
\begin{equation} \label{3.15}
\frac{\partial G_{ij}}{\partial H_{kl}}=-G_{ik}G_{lj}\,,
\end{equation}
thus
\begin{equation*} 
\begin{aligned}
(a)&=- \frac{1}{N^2} \sum_{i,j} \bE\bigg[ \sum_{k=1}^{n-1} \frac{1}{N}Q_{n-1}^{(k)} (G^2(z_k))_{ji}G_{ij}(z_n)+\langle Q_{n-1} \rangle G_{ii}(z_n)G_{jj}(z_n)\bigg]\\
&= -\frac{1}{N^2}\sum\limits_{k=1}^{n-1} \bE Q_{n-1}^{(k)} \ul{{G}^2(z_k) G(z_n)}-\bb E \langle Q_{n-1}\rangle \ul{G(z_n)}^2 \\[0.5em]
&=  -\frac{1}{N^2}\sum\limits_{k=1}^{n-1} \bE Q_{n-1}^{(k)} \ul{{G}^2(z_k) G(z_n)} - \bE Q_{n} \langle \ul{G(z_n)} \rangle -2\bE Q_{n}\bE \underline{G(z_n)} + \bE Q_{n-1}\bE \langle \underline{G(z_n)} \rangle^2\,,
\end{aligned} 
\end{equation*} 
where in the last step we used Lemma \ref{Tlemc}.
Similarly, we have
\begin{equation*} 
(b)=-\frac{\sigma}{N^2}\sum\limits_{k=1}^{n-1} \bE Q_{n-1}^{(k)} \ul{{G}^{2}(z_k) G^{\intercal} (z_n)} -\frac{\sigma}{N}\bb E Q_{n-1} \langle \ul{G(z_n)G^{\intercal}(z_n)}\rangle\,.
\end{equation*}
Altogether we obtain
\begin{equation} \label{T4.20}
\begin{aligned}
	\bE Q_n=&\ \frac{1}{T_n} \bE Q_n \langle \ul{G(z_n)} \rangle- \frac{1}{T_n}\bE Q_{n-1} \bE \langle \ul{G(z_n)} \rangle^2+\frac{\sigma}{NT_n}\bE Q_{n-1} \langle \ul{G(z_n)G^{\intercal}(z_n)}\rangle\\
	& -\frac{K}{T_n}-\frac{L}{T_n}
	+\frac{1}{N^2T_n}\sum\limits_{k=1}^{n-1} \bE Q_{n-1}^{(k)} \ul{{G}^2(z_k) G(z_n)}+\frac{\sigma}{N^2T_n}\sum\limits_{k=1}^{n-1} \bE Q_{n-1}^{(k)} \ul{{G}^{2}(z_k) G^{\intercal} (z_n)}
\end{aligned}
\end{equation}
where $T_n\deq -z_n-2\bE \underline{G(z_n)}$. From (\ref{T3.3}), (\ref{3.4}), and Lemma \ref{prop_prec} it is easy to see that 
\begin{equation} \label{T}
\Big|\frac{1}{T_n}\Big|=O(1)\,,
\end{equation}
and the implicit constant depends only on the distance to the spectral edge $\tau$ defined in Theorem \ref{mainthm2}.

\begin{remark} \label{Trmk4.4}
Before looking at the terms in \eqref{T4.20}, let us compare \eqref{T4.20} to its special case when $H$ is real and symmetric, whose analysis was done in \cite{HK}. When $H$ is real and symmetric, we have $\sigma=1$, and $G^{\intercal}(z_n)=G(z_n)$. Thus \eqref{T4.20} becomes 
\begin{equation} \label{T4.23}
\begin{aligned}
\bE Q_n=&\ \frac{1}{T_n} \bE Q_n \langle \ul{G(z_n)} \rangle- \frac{1}{T_n}\bE Q_{n-1} \bE \langle \ul{G(z_n)} \rangle^2+\frac{1}{NT_n}\bE Q_{n-1} \langle \ul{G^2(z_n)}\rangle\\
& -\frac{K}{T_n}-\frac{L}{T_n}
+\frac{2}{N^2T_n}\sum\limits_{k=1}^{n-1} \bE Q_{n-1}^{(k)} \ul{{G}^2(z_k) G(z_n)}\,,
\end{aligned}
\end{equation}
which coincides with (5.21) in \cite{HK}. The symmetry of $G$ simplifies the analysis of many terms, for example the last term on the RHS of \eqref{T4.23} is the leading term, and it can be treated with the resolvent identity
\begin{equation*}
G^2(z_k)G(z_n)=-\frac{G(z_k)-G(z_n)}{(z_k-z_n)^2}+\frac{G^2(z_k)}{z_k-z_n}\,,
\end{equation*}
which allows us to proceed the computation easily; when $G$ is not symmetric, we have the last two terms on the RHS of \eqref{T4.20} instead, and the analysis of $G^2(z_k)G^{\intercal}(z_n)$ is more subtle. The asymmetry of $G$ is dealt by Lemma \ref{Tlem4.5} below, and we prospond the proof to Section \ref{Tsec5}. On the other hand, the estimates of $L$ and $K$ in \eqref{T4.23} can easily be applied to \eqref{T4.20}, and we summarize these results in Lemma \ref{Tlem4.2} below.
\end{remark}

\begin{lemma} \label{Tlem4.2}
	We extend the definition of $\chi$ in \eqref{T4.2} to a function $\chi(\cdot): (0,1) \to \bR$ such that
	\begin{equation} \label{functionc_0}
	\chi(x)\deq \frac{1}{2}\min\{x,1-x\}\,.
	\end{equation}
	Let $K,L$ be as in \eqref{T4.20}. We have the following results.
	\begin{enumerate}
		\item $|L|=O_{\prec}(N^{n(\alpha+\gamma-1)-\chi(\alpha+\gamma)})$.
		\item $|K|=O_{\prec}(N^{n(\alpha+\gamma-1)-(\alpha+\gamma)})$.
	\end{enumerate}
\end{lemma}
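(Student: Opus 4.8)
\textbf{Proof proposal for Lemma \ref{Tlem4.2}.}

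The plan is to bound $L$ and $K$ termwise by repeatedly applying the derivative rule \eqref{3.15}, the cumulant bounds of Lemma \ref{Tlemh}, the entrywise estimates of Lemma \ref{prop4.4} (in particular \eqref{Tk} and \eqref{410}), and the multiplicative property of stochastic domination in Lemma \ref{prop_prec}. Since the structure of $K$ and $L$ here is identical to that of the corresponding terms in \eqref{T4.23}, which were already controlled in \cite{HK}, the proof amounts to observing that those estimates are insensitive to the value of $\sigma$ and tracking the bookkeeping with the parameters $a = \eta\,\omega = N^{-(\alpha+\gamma)}$ in place of $\eta = N^{-\alpha}$.

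For part (ii), $K$ is a single sum over $i$ of $\bb E\,\partial_{ii}\big(\langle Q_{n-1}\rangle G_{ii}(z_n)\big)(\zeta_i-1-\sigma)$, with $|\zeta_i - 1 - \sigma| = O(1)$ by Definition \ref{def:dWigner}(iii). Applying \eqref{3.15}, the derivative either hits one of the $n-1$ factors $\langle\ul{G(z_k)}\rangle$, producing a factor $\frac1N (G^2(z_k))_{ii}$, or hits $G_{ii}(z_n)$, producing $-(G^2(z_n))_{ii}$; in either case one diagonal entry of a squared resolvent appears and contributes a factor $O_\prec(N^{\alpha+\gamma})$ by \eqref{410} (evaluated at spectral parameter with imaginary part $a$). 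The remaining $n-2$ factors $\langle\ul{G(z_k)}\rangle$ each contribute $O_\prec(N^{(\alpha+\gamma)-1})$ by \eqref{Tk} with $k=1$; the prefactor $N^{-2}$ combined with $\sum_i$ gives $N^{-1}$. Collecting: when the derivative hits a $\langle\ul{G}\rangle$ factor there is an extra $N^{-1}$ and one must use $\big|\bb E Q_{n-1}^{(k)}\ul{G^2(z_k)G(z_n)}\big| = O_\prec(N^{(n-2)(\alpha+\gamma-1)})\cdot O_\prec(N^{\alpha+\gamma})$ type bookkeeping, while the dominant contribution comes from the derivative hitting $G_{ii}(z_n)$, yielding $N^{-1}\cdot N^{\alpha+\gamma}\cdot N^{(n-1)(\alpha+\gamma-1)} = N^{n(\alpha+\gamma-1)-(\alpha+\gamma)}$ after noting $N^{-1} = N^{(\alpha+\gamma-1)-(\alpha+\gamma)}\cdot N^{?}$; more cleanly, $N^{-2}\cdot N\cdot N^{\alpha+\gamma}\cdot N^{(n-1)((\alpha+\gamma)-1)} = N^{n(\alpha+\gamma-1)-(\alpha+\gamma)}$, which is \ref{Tlem4.2}(ii). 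One must also check that $\langle\ul{G}\rangle$-type factors can be replaced by $\ul{G^2}$ factors with one fewer centered variable without loss, exactly as in \cite{HK}.

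For part (i), $L$ is the higher-order ($k\ge 2$) part of the cumulant expansion plus the remainder $R_{\ell+1}^{(ji)}$. Each term $\mathcal C_{p+1,q}(H_{ji})$ is $O_k(N^{-(p+q+1)/2})$ by Lemma \ref{Tlemh}; the derivative $\partial_{ji}^p\partial_{ij}^q\big(\langle Q_{n-1}\rangle G_{ij}(z_n)\big)$, expanded by Leibniz and \eqref{3.15}, produces a product of entries of various $G^m$'s at parameters with imaginary part $a$, with $i,j$ appearing as indices; using \eqref{410} and \eqref{Tk} and then summing $\frac1N\sum_{i,j}$ one gains at most $N^2$ from the sum but pays the gain from off-diagonal entries and the cumulant smallness, so that the $k=2$ term is the worst and matches $N^{n(\alpha+\gamma-1)-\chi(\alpha+\gamma)}$ after using $\chi(\alpha+\gamma) = \frac12\min\{\alpha+\gamma,1-\alpha-\gamma\}$; the remainder $R_{\ell+1}^{(ji)}$ is made negligible by choosing $\ell$ large (depending on $n$), estimating the cut-off derivatives crudely by $N^{O(\ell)}$ on $\{|H_{ji}|\le t\}$ with $t=N^{-1/2+\epsilon}$ and using the moment bound (iii)' on the tail, exactly as in Section 5 of \cite{HK}. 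The main obstacle is purely organizational: faithfully reproducing the (somewhat lengthy) $\sigma$-independent power counting of \cite{HK} while carrying the shift $\eta \to a = \eta\,\omega$ through every estimate, and verifying that the extra $\sigma$-weighted copies of terms in \eqref{T4.20} (which differ from \eqref{T4.23} only by transposes $G^\intercal$ and the constant $\sigma\in[-1,1]$) obey the same bounds, since transposition does not change the entrywise estimates in Lemmas \ref{prop4.4} and \ref{Tlem3.8}.
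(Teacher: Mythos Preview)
Your approach is essentially the same as the paper's proof, which also defers directly to the analogous estimates in \cite{HK} (Lemmas 4.6 and 4.7 there) and simply carries the parameter shift $\alpha \mapsto \alpha+\gamma$ through the power counting, noting that the transposes and the factor $\sigma\in[-1,1]$ do not affect the entrywise bounds. One small slip in your part (ii) sketch: the derivative $\partial_{H_{ii}} G_{ii}(z_n)$ equals $-G_{ii}(z_n)^2$ by \eqref{3.15}, not $-(G^2(z_n))_{ii}$, so that factor is $O_\prec(1)$ rather than $O_\prec(N^{\alpha+\gamma})$; redoing the arithmetic with this correction and all $n-1$ remaining factors $\langle\ul{G(z_k)}\rangle$ gives $N^{-2}\cdot N\cdot 1\cdot N^{(n-1)((\alpha+\gamma)-1)} = N^{n(\alpha+\gamma-1)-(\alpha+\gamma)}$, which is the claimed bound.
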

\begin{proof} 
	(i) For $k \ge 2$, we define
	\begin{equation} \label{T4.27}
	J_k\deq {N^{-(k+3)/2}} \cdot \sum\limits_{i,j}\sum_{\substack{p,q \ge 0,\\p+q=k}}\bigg|\bE\frac{\partial^{p+q}( \langle Q_{n-1} \rangle G_{ij}(z_n))}{ \partial {H_{ji}}^p \partial {H_{ij}}^q}\bigg|\,, 
	\end{equation}
    and Lemma \ref{Tlemh} shows 
	\begin{equation} \label{T4.28}
	|L |\leq \sum_{k=2}^{\ell} O(J_k) +\frac{1}{N} \sum_{i,j} |R^{(ji)}_{\ell+1}|\,. 
	\end{equation}
	The estimate of $J_k$ is similar to the proof of Lemma 4.6 (ii) in \cite{HK}. More precisely, after applying the differentials in \eqref{T4.27} carefully, we can use Lemmas \ref{prop4.4}-\ref{prop_prec} to show that
	\begin{equation} \label{T4.29}
	J_k =O_{\prec}(N^{n(\alpha+\gamma-1)-\chi(\alpha+\gamma)})
	\end{equation}
	for any fixed $k \ge 2$. Also, one can following the routine verification of Lemma 4.6 (i) in \cite{HK} to show there exists some $L \in  \bb N$ such that $\frac{1}{N} \sum_{i,j} |R^{(ji)}_{\ell+1}|=O(N^{-(n+1)})$ for all $\ell \ge L$. Together with \eqref{T4.28} and \eqref{T4.29} we get the desired result.
	
	(ii) The proof is analogue to that of Lemma 4.7 in \cite{HK}. By $\max_i \zeta_i \leq C$ we see that
	\begin{equation*}
	|K| = O\Big(\frac{1}{N^2}\Big) \cdot \sum\limits_{i} \bE\Big|\frac{\partial( \langle Q_{n-1} \rangle  G_{ii}(z_n))}{\partial H_{ii}}\Big|\,,
	\end{equation*}
	and the proof follows by Lemmas \ref{prop4.4} -- \ref{prop_prec}.
\end{proof}

\begin{lemma} \label{Tlem4.5}
	Let $H$ be a Wigner matrix satisfying Definition \ref{def:dWigner}. Fix $\alpha \in (0,1)$ and let $E,E' \in (-2,2)$. Let $G\deq  (H-z)^{-1}$ and $F\deq (H-z')^{-1}$, where $z\deq E+\mathrm{i}\eta$, $z'\deq E'+\ii \eta$, and $\eta\deq N^{-\alpha}$. Let $\chi$ be as in \eqref{T4.2}. We have the following results.
	\begin{enumerate}
		\item $\bb E \ul{G^2\ol{F}}=-\nu^{-2} (m(z)-m(\bar{z}'))+O_{\prec}(N^{2\alpha-\chi})$, where $\nu\equiv\nu(z,\bar{z}')$ is defined in \eqref{Tnu} below.
		\item $\langle \ul{G\ol{F}} \rangle\prec N^{2\alpha-1}$ and  $ \langle \ul{G^2\ol{F}} \rangle \prec N^{3\alpha-1}$\,.
		\item $\bb E \ul{GF^{\intercal}} \prec N^{\alpha-\chi}$ and $\langle\ul{GF^{\intercal}}\rangle \prec N^{2\alpha-1}$. 
		\item $\bb E \ul{G^2F^{\intercal}} \prec N^{2\alpha-\chi}$ and $\langle\ul{G^2F^{\intercal}}\rangle \prec N^{3\alpha-1}$. 
		\item $\ul{G^2F}\prec N^{2\alpha-\chi}$\,.
		\item $\bb E \ul{G^2F^*}=-(z-\bar{z}')^{-2}(m(z)-m(\bar{z}'))+O_{\prec}(N^{2\alpha-\chi})$ and $\langle \ul{G^2F^*}\rangle \prec N^{3\alpha-1}$.
	\end{enumerate}
\end{lemma}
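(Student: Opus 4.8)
The plan is to split the six estimates according to whether the traced product involves only resolvents of $H$ (items (v) and (vi)) or genuinely mixes resolvents of $H$ with resolvents of $H^{\intercal}=\overline H$ (items (i)--(iv)), since only the latter resist the resolvent identity. For (v) and (vi) I would argue by resolvent identities alone. From $AB=(A-B)/(a-b)$ for $A=(H-a)^{-1}$, $B=(H-b)^{-1}$ with $a\ne b$, together with $\partial_{z}(H-z)^{-1}=(H-z)^{-2}$, one writes $\underline{G^{2}F^{*}}=\underline{G^{2}}/(z-\bar z')-(\underline G-\underline{F^{*}})/(z-\bar z')^{2}$; since $\operatorname{Im}(z-\bar z')=2\eta>0$ this is always legitimate, and inserting $\mathbb E\underline G=m(z)+O_{\prec}(N^{\alpha-1-\chi})$, $\mathbb E\underline{G^{2}}=O_{\prec}(N^{\alpha-\chi})$ from Lemma~\ref{Tlem3.9} and the fluctuation bounds of Lemma~\ref{prop4.4} gives both the leading term $-(z-\bar z')^{-2}(m(z)-m(\bar z'))$ and the error $O_{\prec}(N^{2\alpha-\chi})$, the point being that an $N^{\alpha-1-\chi}$ error divided by $(z-\bar z')^{2}\gtrsim N^{-2\alpha}$ is still $O_{\prec}(N^{3\alpha-1-\chi})\subset O_{\prec}(N^{2\alpha-\chi})$ because $\alpha\le1$, while the leftover $\underline{G^{2}}/(z-\bar z')$ is absorbed using $\chi\le\alpha$. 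For $\underline{G^{2}F}$ the energies $z,z'$ lie on the same side, so I would extract an $H$-entry and cumulant-expand as below; the resulting self-consistent operator $z+m(z)+m(z')=-m(z)^{-1}+m(z')$ has strictly positive imaginary part in the bulk, hence is invertible with $O(1)$ inverse, and this case is routine.

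For items (i)--(iv) the engine is the cumulant expansion. Taking $\underline{GF^{\intercal}}=\tfrac1N\sum_{ij}G_{ij}F_{ij}$ as a model, using $z'F=HF-I$ gives $z'\mathbb E\underline{GF^{\intercal}}=\tfrac1N\sum_{ij}\mathbb E[H_{ij}(GF^{\intercal})_{ij}]-\mathbb E\underline G$, and Lemma~\ref{lem:3.1} turns the sum into a self-consistent equation of the schematic form $(z'+m(z')+\sigma m(z))\,\mathbb E\underline{GF^{\intercal}}=-m(z)+\mathcal E$, where the $\sigma m(z)$ arises from the $\mathbb E H_{ij}^{2}=\sigma/N$ term differentiating $F$, and $\mathcal E$ is controlled by Lemmas~\ref{prop4.4}, \ref{Tlem3.8}, \ref{Tlem3.9} (the cross terms are of the form $N^{-1}\underline{(\text{four resolvents})}=O_{\prec}(N^{3\alpha-1})$ and the tail of the expansion is made negligible as in Lemma~\ref{Tlem4.2}). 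For the $\overline F$-products of (i) and (ii) the same computation pairs $G$ at $z$ with $\overline F$ at $\bar z'$, and the operator becomes of the form $\bar z'+m(\bar z')+\sigma m(z)$, which by $m+m^{-1}=-z$ degenerates like $1-\sigma+\eta$ as $\sigma\to1$ --- exactly the instability flagged in Section~\ref{Tsec3}. To cure it I would derive a \emph{second} self-consistent equation by extracting $\overline H$ rather than $H$, i.e.\ by applying the imaginary-part cumulant expansion Lemma~\ref{Tlem5.1} to $\mathbb E[f(h,\bar h)(h-\bar h)]$ (cf.\ \eqref{IG}), and subtract: the difference of source terms and the difference of second-order operators both acquire the factor $1-\sigma=\mathbb E|H_{ij}|^{2}-\mathbb E\overline H_{ij}^{\,2}$, so the new operator $\nu\equiv\nu(z,\bar z')$ of \eqref{Tnu} (a stabilized $z-\bar z'$, of size $\gtrsim1-\sigma+\eta$) is invertible while the remainder carries a compensating $1-\sigma$; solving yields $\mathbb E\underline{G^{2}\overline F}=-\nu^{-2}(m(z)-m(\bar z'))+O_{\prec}(N^{2\alpha-\chi})$. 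The same device, with one more power of $G$ or with $F^{\intercal}$ in place of $\overline F$, handles (ii)--(iv).

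The centred bounds ($\langle\underline{G\overline F}\rangle\prec N^{2\alpha-1}$, $\langle\underline{G^{2}\overline F}\rangle\prec N^{3\alpha-1}$, and likewise in (iii), (iv), (vi)) I would read off from the same self-consistent identities written for $\langle\,\cdot\,\rangle$ in place of $\mathbb E$: the fluctuation of each right-hand side is driven by $\langle\underline G\rangle\prec(N\eta)^{-1}$ and $\langle\underline{G^{k}}\rangle\prec N^{(k-1)\alpha-(1-\alpha)}$ from \eqref{3.4}, \eqref{Tk}, which propagate through the (stable) operators; alternatively one may expand $\mathbb E\lvert\langle\underline{\,\cdot\,}\rangle\rvert^{2}$ directly by a shorter cumulant argument mirroring Proposition~\ref{Tlem4.3}(i).

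The crux is the stabilization for (i) and (ii): one has to run the cumulant expansion on \emph{both} $\mathbb E[f\cdot h]$ and $\mathbb E[f\cdot(h-\bar h)]$, align the two self-consistent equations term by term --- including the subleading corrections to the operators and every $\sigma$-dependent piece --- so that their difference genuinely factors $1-\sigma$ out of both the source and the remainder, and then control the order of each error carefully enough that after division by $\nu$ (as small as $N^{-\alpha}$ when $1-\sigma\ll\eta$) the $N^{-\chi}$ gain still survives. This last bookkeeping is exactly where $\chi=\tfrac12\min\{\alpha,1-\alpha\}$ gets used repeatedly, and it is the step most sensitive to how the expansions are organized.
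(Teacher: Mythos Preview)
Your proposal is correct and essentially matches the paper's proof: resolvent identity for (vi), the standard cumulant expansion (Lemma~\ref{lem:3.1}) with a stable $O(1)$-inverse operator for (iii)--(v), and the subtraction device of Lemma~\ref{Tlem5.1} to produce the stabilized operator $\nu$ (with the compensating $1-\sigma$ in the remainder) for (i)--(ii). One small correction on the fluctuation estimates: the centred bounds cannot literally be ``read off'' from self-consistent identities written in $\langle\cdot\rangle$, since the cumulant expansion is an expectation identity; the paper (and your stated alternative) instead bounds $\bb E\lvert\langle\underline{\,\cdot\,}\rangle\rvert^{2p}$ for every fixed $p$ --- still using the stabilization via Lemma~\ref{Tlem5.1} whenever $\overline F$ appears --- which is what is actually needed to conclude a $\prec$-bound rather than just a variance estimate.
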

\begin{proof}
	See Section \ref{Tsec5}.
\end{proof}

Now let us turn to \eqref{T4.20}. Lemma \ref{prop4.4} and \eqref{T} imply
\begin{equation} \label{T4.30}
\frac{1}{T_n} \bE Q_n \langle \ul{G(z_n)} \rangle- \frac{1}{T_n}\bE Q_{n-1} \bE \langle \ul{G(z_n)} \rangle^2=O_{\prec}(N^{n(\alpha+\gamma-1)-\chi(\alpha+\gamma)})\,,
\end{equation}
where $\chi(\cdot)$ was defined in \eqref{functionc_0}. Also, by Lemmas \ref{prop4.4}, \ref{Tlem4.5} (iii) and \eqref{T} we have
\begin{equation} \label{T4.31}
\frac{\sigma}{NT_n}\bE Q_{n-1} \langle \ul{G(z_n)G^{\intercal}(z_n)}\rangle=O_{\prec}(N^{n(\alpha+\gamma-1)-\chi(\alpha+\gamma)})\,.
\end{equation}
Inserting Lemma \ref{Tlem4.2} and \eqref{T4.30}-\eqref{T4.31} into \eqref{T4.20} yields
\begin{equation} \label{o}
\bE Q_n=\frac{1}{N^2T_n}\sum\limits_{k=1}^{n-1} \bE Q_{n-1}^{(k)} \ul{{G}^2(z_k) G(z_n)}+\frac{\sigma}{N^2T_n}\sum\limits_{k=1}^{n-1} \bE Q_{n-1}^{(k)} \ul{{G}^{2}(z_k) G^{\intercal} (z_n)}+O_{\prec}(N^{n(\alpha+\gamma-1)-\chi(\alpha+\gamma)})
\end{equation}	
uniformly for all $(z_1,\dots,z_n)\in A$, where $A$ was defined in \eqref{T414}. Note that by the definitions of $\alpha$, $\gamma$ and $\chi$ we have $\chi(\alpha+\gamma)\ge \chi -\gamma/2=\chi-\chi/(4n)\ge 7\chi/8$. Since we have the simple estimate 
\begin{equation} \label{simple}
\oint _{\partial D_a} |\tilde{f}_{\eta}(z)|\, \dd z=O(\eta)\,,
\end{equation}
we know from \eqref{T4.11} and \eqref{o} that
\begin{equation} \label{hehe}
\begin{aligned}
&\ \bb E \langle \tr f_{\eta}(H)\rangle^n\\ = &\ \Big(\frac{\ii}{2\pi}\Big)^n \frac{N^{n-2}}{T_n}\sum\limits_{k=1}^{n-1}\oint_{(\partial D_a)^n} \tilde{f}_{\eta}(z_1)\cdots \tilde{f}_{\eta}(z_n)\bE Q_{n-1}^{(k)} \big(\ul{{G}^2(z_k) G(z_n)}+\sigma\, \ul{{G}^2(z_k) G^{\intercal}(z_n)}\big)\mathrm{d} z_1\cdots \mathrm{d} z_n+O_{\prec}(\omega)\,,
\end{aligned}
\end{equation}
where in the estimate of the error term we implicitly used
\begin{equation} \label{Tim}
n\gamma-\chi(\alpha+\gamma) =\chi/2-\chi(\alpha+\gamma)\leq -3\chi/8 \leq -\gamma\,.
\end{equation} 
By symmetry, it suffices to fix $k \in \{1,2,\dots,n-1\}$, and consider the integral of 
\begin{equation} \label{430}
F_{kn}\deq \Big(\frac{\ii}{2\pi}\Big)^n \frac{N^{n-2}}{T_n}\tilde{f}_{\eta}(z_1)\cdots \tilde{f}_{\eta}(z_n)\bE Q_{n-1}^{(k)} \big(\ul{{G}^2(z_k) G(z_n)}+\sigma\, \ul{{G}^2(z_k) G^{\intercal}(z_n)}\big)\,.
\end{equation}

The next Lemma together with \eqref{hehe} will conclude the proof of Proposition \ref{Tlem4.3} (i).

\begin{lemma} \label{lem4.6}
	Let $	F_{kn}$ be as in (\ref{430}), and recall the definition of $\omega$ from \eqref{T4.6}. We have 
		\begin{equation} \label{T4.37}
		\oint_{(\partial D)^n} \b 1_{(y_k=y_n=a)}F_{nk} =O_{\prec}(\omega)\,,
		\end{equation}
		and
		\begin{multline} \label{T4.38}
		\oint_{(\partial D)^n} \b 1_{(y_k=a,y_n=-a)}F_{nk}\\ =\frac{1}{8\pi^2}\int (f(u)-f(v))^2\Big(\frac{1}{(u-v)^2}+\frac{1}{(u-v+\ii \mu)^2}\Big)\,\dd u\,\dd v \cdot\bb E \langle \tr f_{\eta}(H)\rangle^{n-2} +O_{\prec}(\omega)\,.
		\end{multline}
		Similarly,
		\begin{equation} \label{T4.39}
		\oint_{(\partial D)^n} \b 1_{(y_k=y_n=-a)}F_{nk} =O_{\prec}(\omega)\,,
		\end{equation}		
and     \begin{multline} \label{T4.40}
		\oint_{(\partial D)^n} \b 1_{(y_k=-a,y_n=a)}F_{nk}\\ =\frac{1}{8\pi^2}\int (f(u)-f(v))^2\Big(\frac{1}{(u-v)^2}+\frac{1}{(u-v-\ii \mu)^2}\Big)\,\dd u\,\dd v\cdot \bb E \langle \tr f_{\eta}(H)\rangle^{n-2}+O_{\prec}(\omega)\,.
		\end{multline}
\end{lemma}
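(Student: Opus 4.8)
The plan is to compute $\oint_{(\partial D_a)^n}F_{kn}$ by decomposing the contour $(\partial D_a)^n$ into the $2^n$ pieces on which each $y_j = \pm a$ is fixed, and then showing that the only nonzero contributions come from configurations where $z_k$ and $z_n$ lie on \emph{opposite} sides of the real axis; on the same-side configurations the integrand is holomorphic in a neighbourhood containing the relevant half-strip (after pairing up the $\pm a$ contours), so Cauchy's theorem and the estimate $\oint_{\partial D_a}|\tilde f_\eta(z)|\,\dd z = O(\eta)$ from \eqref{simple} kill them down to $O_\prec(\omega)$. More precisely, for \eqref{T4.37} and \eqref{T4.39} I would first replace $\bb E Q_{n-1}^{(k)}\ul{G^2(z_k)G(z_n)}$ and $\bb E Q_{n-1}^{(k)}\ul{G^2(z_k)G^{\intercal}(z_n)}$ by their leading deterministic parts using Lemma \ref{Tlem4.5} (v), (i), (iv), (vi) together with the factorization $\bb E Q_{n-1}^{(k)}(\cdots) = \bb E Q_{n-1}^{(k)}\cdot \bb E(\cdots) + (\text{fluctuation})$, the fluctuation being controlled by parts (ii), (iv), (vi) of Lemma \ref{Tlem4.5} and the $O_\prec(N^{(k-1)\alpha -(1-\alpha)})$ bound of Lemma \ref{prop4.4} on $\langle\ul{G^k}\rangle$. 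The deterministic leading parts are of the form $-\nu(z_k,z_n)^{-2}(m(z_k)-m(z_n))$ and $-(z_k-z_n)^{-2}(m(z_k)-m(z_n))$, which are holomorphic in $z_k$ on a full neighbourhood of each open half-strip $\{0<\pm\im z<10\}$; hence when $y_k=y_n$ the inner $z_k$-integral closes up to $O(\eta)$ and, after accounting for the prefactor $N^{n-2}$ and the remaining $n-2$ contour integrals (each $O(\eta)$, each $\ul{G(z_j)}$ fluctuation being $O_\prec((N\eta)^{-1})$), one lands at $O_\prec(\omega)$ via \eqref{Tim}.

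For the opposite-side cases \eqref{T4.38} and \eqref{T4.40} the residue at the pinch $z_k \approx z_n$ survives. Here I would substitute the explicit leading terms: on $y_k=a$, $y_n=-a$ one has $z_n = \bar z_n'$-type conjugate geometry, so $\nu(z_k,\bar z_n)$ becomes (by its definition in \eqref{Tnu}, to be recalled) essentially $z_k-\bar z_n$ up to a correction of relative size $1-\sigma$, giving the two kernels $\frac{1}{(u-v)^2}$ (from $G^2 G^*$-type, coefficient $1$) and $\frac{1}{(u-v+\ii\mu)^2}$ (from the $\sigma\,\ul{G^2 G^\intercal}$ term, where the $1-\sigma$ shift turns into the $\ii\mu$ because $\mu = \sqrt{4-E^2}(1-\sigma)/\eta$ and we rescale $z = E + \eta\zeta$). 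After the change of variables $z_j = E+\eta\zeta_j$ the function $\tilde f_\eta(z_j)$ becomes $\approx f(\re\zeta_j)$ on the rescaled contours, the $N^{n-2}$ combines with the Jacobians and the $\ul{\cdot}$ normalizations, and the $z_k,z_n$ double integral of $\tilde f_\eta(z_k)\tilde f_\eta(z_n)$ against $(z_k-z_n)^{-2}(m(z_k)-m(z_n))$-type kernels is evaluated by contour methods / the identity already used in \cite{HK}, producing $\frac{1}{8\pi^2}\int (f(u)-f(v))^2(\cdots)\,\dd u\,\dd v$; the leftover $n-2$ integrals reassemble into $\bb E Q_{n-2}$-type objects and, by \eqref{T4.11} applied with $n-2$ in place of $n$ (run backwards), into $\bb E\langle\tr f_\eta(H)\rangle^{n-2}$ up to $O_\prec(\omega)$.

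The main obstacle I anticipate is bookkeeping the asymmetry-induced term $\sigma\,\ul{G^2(z_k)G^{\intercal}(z_n)}$: unlike the symmetric case of \cite{HK}, I cannot use $G^\intercal = G$, so I must run the whole computation with Lemma \ref{Tlem4.5} (i), (iii), (iv) (the transpose and bar estimates) and track how the parameter $\nu(z_k,\bar z_n)$ — which interpolates between $z_k - z_n$ (when $1-\sigma \gg \eta$, i.e.\ $\mu=\infty$) and $z_k - \bar z_n$-type behaviour (when $1-\sigma\ll\eta$, i.e.\ $\mu=0$) — feeds into the kernel, ultimately yielding the $\ii\mu$ displacement in the second kernel of \eqref{T4.38}. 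A secondary difficulty is ensuring all error terms are uniformly $O_\prec(\omega)$ over the compact region $A$, which requires the $\chi(\alpha+\gamma)\ge 7\chi/8$ slack noted after \eqref{o} together with \eqref{Tim}; I would carry the bounds of Lemma \ref{Tlem4.2}, \eqref{T4.30}, \eqref{T4.31} through each contour integral, checking at each step that the exponent $n(\alpha+\gamma-1)-\chi(\alpha+\gamma)$, after multiplication by $N^{n-2}$ and $n$ factors of $\eta = N^{-\alpha}$, stays $\le -\gamma$.
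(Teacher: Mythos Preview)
Your treatment of the opposite-side contributions \eqref{T4.38} and \eqref{T4.40} is essentially the paper's argument: on $\{y_k=a,\,y_n=-a\}$ one has $G(z_n)=F^*$ and $G^{\intercal}(z_n)=\ol F$ with $F=G(\bar z_n)$, so Lemma~\ref{Tlem4.5}~(i),~(ii),~(vi) supply the leading deterministic parts $-(z_k-z_n)^{-2}(m(z_k)-m(z_n))$ and $-\nu(z_k,z_n)^{-2}(m(z_k)-m(z_n))$ together with fluctuation bounds; one then factors off $\bb E Q_{n-1}^{(k)}$, computes the explicit $(z_k,z_n)$ double integral (the paper does this through the auxiliary functions $\phi,\varphi$ and a rescaling), and recognises the remaining $n-2$ contour integrals as $\bb E\langle\tr f_\eta(H)\rangle^{n-2}$ via \eqref{T4.11}.

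For the same-side contributions \eqref{T4.37} and \eqref{T4.39}, however, your plan rests on a misconception. Lemma~\ref{Tlem4.5}~(i) and~(vi) concern $\bb E\ul{G^2\ol F}$ and $\bb E\ul{G^2 F^*}$, i.e.\ configurations where the second spectral parameter lies in the \emph{opposite} half-plane from that of $G$; they do not apply when $y_k=y_n$. In the same-side case the relevant inputs are Lemma~\ref{Tlem4.5}~(iv) and~(v), and these provide \emph{only} size bounds
\[
\absb{\ul{G^2(z_k)G(z_n)}}\,,\ \absb{\ul{G^2(z_k)G^{\intercal}(z_n)}}\prec N^{2(\alpha+\gamma)-\chi(\alpha+\gamma)}\,,
\]
with no explicit leading term to extract. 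There is thus nothing for a Cauchy/holomorphy argument to act on, and in any event the contour $\{\im z_k=a\}$ is not closed and cannot be deformed across the real axis without meeting the branch cut of $m$. The paper avoids all of this: it combines the size bounds above with $|Q_{n-1}^{(k)}|\prec N^{(n-2)(\alpha+\gamma-1)}$, $|T_n^{-1}|=O(1)$ and $\oint_{\partial D_a}|\tilde f_\eta|=O(\eta)$ to obtain directly
\[
\oint_{(\partial D_a)^n}\b 1_{(y_k=y_n=a)}|F_{kn}|\prec N^{-n\alpha}\cdot N^{n(\alpha+\gamma)-\chi(\alpha+\gamma)}=N^{n\gamma-\chi(\alpha+\gamma)}\le N^{-\gamma}=\omega
\]
by \eqref{Tim}. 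No contour deformation is used or needed; the point is precisely that on the same side the traces themselves are already small, which is the content of parts~(iv) and~(v).
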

\begin{proof} We only prove \eqref{T4.37} and \eqref{T4.38}, the proof of \eqref{T4.39} and \eqref{T4.40} follows in the same fashion.
	
	(i) Let us first look at \eqref{T4.37}. By Lemma \ref{Tlem4.5} (iv), (v) we see that
	\begin{equation*}
	\big|\ul{{G}^2(z_k) G(z_n)}\big|\,,\big|\ul{{G}^2(z_k) G^{\intercal}(z_n)}\big| \prec N^{2(\alpha+\gamma)-\chi(\alpha+\gamma)}
	\end{equation*}
	uniformly in  $A\cap\{(z_1,...,z_n)\in \bb C: y_k=y_n=a\}$, where $\chi(\cdot)$ was defined in \eqref{functionc_0}. Also, Lemma \ref{prop4.4} shows $|Q_{n-1}^{(k)}| \prec N^{(n-2)(\alpha+\gamma-1)}$ uniformly in  $A\cap\{(z_1,...,z_n)\in \bb C: y_k=y_n=a\}$. Thus \eqref{T} shows
	\begin{equation*}
	\Big(\frac{\ii}{2\pi}\Big)^n\frac{N^{n-2}}{T_n}\bE Q_{n-1}^{(k)} \big(\ul{{G}^2(z_k) G(z_n)}+\sigma\, \ul{{G}^2(z_k) G^{\intercal}(z_n)}\big)=O_{\prec}(N^{n(\alpha+\gamma)-\chi(\alpha+\gamma)})
	\end{equation*}
	uniformly in $A\cap\{(z_1,...,z_n)\in \bb C: y_k=y_n=a\}$. Together with \eqref{simple} and \eqref{Tim} we have
	\begin{equation*}
	\oint_{(\partial D)^n} \b 1_{(y_k=y_n=a)}F_{nk}=O_{\prec}(N^{-n\alpha}\cdot N^{n(\alpha+\gamma)-\chi(\alpha+\gamma)})=O_{\prec}(\omega).
	\end{equation*}
	
	(ii) Now we look at \eqref{T4.38}. By Lemma \ref{Tlem4.5} (i), (ii), and (vi), we see that
	\begin{equation*}
	\ul{{G}^2(z_k) G(z_n)}+\sigma\, \ul{{G}^2(z_k) G^{\intercal}(z_n)}=-((z_k-z_n)^{-2}+\sigma\nu(z_k,z_n)^{-2})(m(z_k)-m(z_n))+O(N^{2(\alpha+\gamma)-\chi(\alpha+\gamma)})
	\end{equation*}
	uniformly in $A\cap\{(z_1,...,z_n)\in \bb C: y_k=a,y_n=-a\}$. Also, Lemma \ref{prop4.4} shows $|Q_{n-1}^{(k)}| \prec N^{(n-2)(\alpha+\gamma-1)}$ uniformly in  $A\cap\{(z_1,...,z_n)\in \bb C: y_k=a,y_n=-a\}$. Thus Lemma \ref{Tk} and \eqref{T} imply 
	\begin{multline*}
	\frac{N^{n-2}}{T_n}\bE Q_{n-1}^{(k)} \big(\ul{{G}^2(z_k) G(z_n)}+\sigma\, \ul{{G}^2(z_k) G^{\intercal}(z_n)}\big)\\
	=(z_n+2m(z_n))^{-1}((z_k-z_n)^{-2}+\sigma\nu(z_k,z_n)^{-2})(m(z_k)-m(z_n))N^{n-2} \bb E Q_{n-1}^{(k)}+O_{\prec}(N^{n(\alpha+\gamma)-\chi(\alpha+\gamma)})\\
	\eqd \phi(z_k,z_n) N^{n-2} \bb E Q_{n-1}^{(k)}+O_{\prec}(N^{n(\alpha+\gamma)-\chi(\alpha+\gamma)})\,.
	\end{multline*}
	Thus \eqref{simple} and \eqref{Tim} imply
	\begin{equation} \label{T4.47}
	\begin{aligned}
	&\ \ \ \oint_{(\partial D)^n} \b 1_{(y_k=a,y_n=-a)}F_{nk}\\
	 &= \Big(\frac{\ii}{2\pi}\Big)^n\int_{(\partial D_a)^n} \b 1_{(y_k=a,y_n=-a)} \tilde{f}_{\eta}(z_1)\cdots \tilde{f}_{\eta}(z_n)\,\phi(z_k,z_n) N^{n-2} \bb E Q_{n-1}^{(k)} \dd z_1\cdots \dd z_n+O_{\prec}(\omega)\\
	&= -\frac{1}{4\pi^2}\oint_{(\partial D_a)^2} \b 1_{(y_k=a,y_n=-a)} \tilde{f}_{\eta}(z_k)\tilde{f}_{\eta}(z_n)\phi(z_k,z_n)\, \dd z_k \dd z_n \cdot \Big(\frac{\ii N}{2\pi}\Big)^{n-2} \oint_{(\partial D_a)^{n-2}} \hat{F}_{kn} +O_{\prec}(\omega)\,,
	\end{aligned}
	\end{equation}
	where
	\begin{equation*}
	\hat{F}_{kn}\deq\tilde{f}_{\eta}(z_1)\cdots \tilde{f}_{\eta}(z_{n-1})/\tilde{f}_{\eta}(z_k)\, \bb E Q_{n-1}^{(k)}\,.
	\end{equation*}
	Same as in \eqref{T4.11}, one can show that
	\begin{equation*}
	\oint_{(\partial D_a)^{n-2}} \hat{F}_{kn} =\bb E \langle \tr f_{\eta}(H) \rangle^{n-2}+O_{\prec}(\omega)\,,
	\end{equation*}
	together with \eqref{T4.47} we have
	\begin{equation} \label{T4.49}
	\begin{aligned}
	&\oint_{(\partial D)^n} \b 1_{(y_k=a,y_n=-a)}F_{nk}\\
	=&-\frac{1}{4\pi^2}\oint_{(\partial D_a)^2} \b 1_{(y_k=a,y_n=-a)} \tilde{f}_{\eta}(z_k)\tilde{f}_{\eta}(z_n)\phi(z_k,z_n)\, \dd z_k \dd z_n \cdot \bb E \langle \tr f_{\eta}(H) \rangle^{n-2}+O_{\prec}(\omega)\,.\\
	=&-\frac{1}{4\pi^2}\int_{+\infty}^{-\infty}\int_{-\infty}^{+\infty}\tilde{f}_{\eta}(x_k+\ii a)\tilde{f}_{\eta}(x_n-\ii a)\phi(x_k+\ii a,x_n-\ii a)\, \dd x_k \dd x_n \cdot \bb E \langle \tr f_{\eta}(H) \rangle^{n-2}+O_{\prec}(\omega)\,.
	\end{aligned}
	\end{equation}	
	Note that in for $z \in \supp \tilde f_{\eta}$, we have $|\re z-E|=O{(\eta)}$, together with $E \in [-2+\tau,2-\tau]$ one easily show that
    \begin{equation*}
    \phi(x_k+\ii a,x_n-\ii a)=-(x_k-x_n+2\ii  a)^{-2}-\sigma[(x_k-x_n)+2\ii a+\ii \sqrt{4-E^2}(1-\sigma)]^{-2}+O(N^{2(\alpha+\gamma)-\chi(\alpha+\gamma)\wedge \alpha})
    \end{equation*}
    whenever $x_k+\ii a,x_n-\ii a\in \supp \tilde{f}_{\eta}$. Let us denote 
    \begin{equation*}
    \varphi(z_k,z_n)\deq -(z_k-z_n)^{-2}-\sigma(z_k-z_n+\ii \sqrt{4-E^2}(1-\sigma))^{-2}\,.
    \end{equation*}
	 By $\int |\tilde{f}_{\eta}(x+\ii a) |\dd x=O(\eta)$ and $\int \varphi(x_k+\ii a,x_n-\ii a) \,\dd x_k =\int \varphi(x_k+\ii a,x_n-\ii a) \,\dd x_n=0$ we have
	\begin{multline} \label{T4.55}
	\int_{+\infty}^{-\infty}\int_{-\infty}^{+\infty}\tilde{f}_{\eta}(x_k+\ii a)\tilde{f}_{\eta}(x_n-\ii a)\phi(x_k+\ii a,x_n-\ii a)\, \dd x_k \dd x_n\\
	=\int_{+\infty}^{-\infty}\int_{-\infty}^{+\infty}\tilde{f}_{\eta}(x_k+\ii a)\tilde{f}_{\eta}(x_n-\ii a)\varphi(x_k+\ii a,x_n-\ii a)\, \dd x_k \dd x_n+O(\omega)\\
	=\frac{1}{2}\int_{-\infty}^{+\infty}\int_{-\infty}^{+\infty}\big(\tilde{f}_{\eta}(x_k+\ii a)-\tilde{f}_{\eta}(x_n-\ii a)\big)^2\varphi(x_k+\ii a,x_n-\ii a)\, \dd x_k \dd x_n+O(\omega)\,.
	\end{multline} 
	Now an elementary estimate shows 
	\begin{multline*}
	\frac{1}{2}\int_{-\infty}^{+\infty}\int_{-\infty}^{+\infty}\big(\tilde{f}_{\eta}(x_k+\ii a)-\tilde{f}_{\eta}(x_n-\ii a)\big)^2\varphi(x_k+\ii a,x_n-\ii a)\, \dd x_k \dd x_n\\
	=-\frac{1}{2}\int (f(u)-f(v))^2\Big(\frac{1}{(u-v)^2}+\frac{\sigma}{(u-v+\ii \sqrt{4-E^2}(1-\sigma)/\eta)^2}\Big)\,\dd u\,\dd v+O(\omega)\\
	=-\frac{1}{2}\int (f(u)-f(v))^2\Big(\frac{1}{(u-v)^2}+\frac{1}{(u-v+\ii \mu)^2}\Big)\,\dd u\,\dd v+O(\omega)\,,
	\end{multline*} 
	together with \eqref{T4.49} and \eqref{T4.55} we compete the proof.
\end{proof}

\subsection{Proof of Proposition \ref{Tlem4.3} (ii)} \label{sec4.4}
Let 
\begin{equation*}
\gamma\deq \chi/4\,,\ \ \omega\deq N^{-\gamma}\,,\ \ \mbox{ and }\ \ a\deq \eta\,\omega=N^{-(\alpha+\gamma)}\,,
\end{equation*}
where $\chi$ is as in \eqref{T4.2}. Note that $\alpha+\gamma \in (0,1)$. Applying Lemma \ref{lem5.1} with $a=\eta\,\omega$ and $k=2$ gives
\begin{equation} \label{T4.58}
\bb E[\tr f_{\eta}(H)]=\frac{\ii N}{2\pi} \oint_{\partial D_a}\tilde f_{\eta}(z)(\bb E\ul{G(z)}-m(z))\, \dd z+\frac{N}{\pi} \int_{D_a} \partial_{\bar{z}}\tilde f_{\eta}(z)(\bb E\ul{G(z)}-m(z))\, \dd^2z\,.
\end{equation}
Note that \eqref{T3.17} implies
\begin{equation*}
\bb E\ul{G(z)}-m(z)=O_{\prec}(N^{\alpha+\gamma-1-\chi(\alpha+\gamma)})
\end{equation*}
uniformly for $z \in \partial D_a \cap \supp \tilde{f}_{\eta}$.
Thus
\begin{equation} \label{T4.60}
\bigg|\frac{\ii N}{2\pi} \oint_{\partial D_a}\tilde f_{\eta}(z)(\bb E\ul{G(z)}-m(z))\, \dd z\bigg|=O_{\prec}(N^{\gamma-\chi(\alpha+\gamma)})=O_{\prec}(N^{-\chi/2})\,.
\end{equation}
By Theorem \ref{refthm1} we have
\begin{equation} \label{T4.61}
\bigg|\frac{N}{\pi} \int_{D_a} \partial_{\bar{z}}\tilde f_{\eta}(z)(\bb E\ul{G(z)}-m(z))\, \dd^2z\bigg|\prec \int_{D_a} \Big| y^2f^{(3)}_{\eta}(x) \frac{1}{y}\Big| \, \dd^2z \prec \omega^2\,.
\end{equation}
Inserting \eqref{T4.60} and \eqref{T4.61} into \eqref{T4.58} completes the proof.

\section{Proof of Lemma \ref{Tlem4.5}} \label{Tsec5}
In this section we prove Lemma \ref{Tlem4.5}. As a preparation, we have the following analogue of Lemma \ref{lem:3.1}, which will be used throughout this section. This lemma provides the key cancellation that allows us to get a stable self-consistent equation for the terms in Lemma \ref{Tlem4.5} (i) and (ii). See Remark  \ref{Trmk5.3} below for a detailed discussion.

\begin{lemma} \label{Tlem5.1}
Let us adopt the assumptions of Lemma \ref{lem:3.1}. Then for any fixed $\ell \in \bb N$, we have
\begin{equation} \label{T5.166}
\bE f(h,\bar{h})(h-\bar{h})=\sum_{k=0}^{\ell}\sum_{\substack{p,q \ge 0,\\p+q=k}} \frac{1}{p!\,q!}\big(\mathcal{C}_{p+1,q}(h)-\cal{C}_{p,q+1}(h)\big)\bE f^{(p,q)}(h,\bar{h}) + \tilde{R}_{\ell+1}\,,
\end{equation}
given all integrals in \eqref{T5.166} exists. Here $\tilde{R}_{\ell+1}$ is the remainder term depending on $f$ and $h$, and for any $t>0$, $\tilde{R}_{\ell+1}$ satisfy the same bound as $R_{\ell+1}$ in \eqref{T3.4}.
\end{lemma}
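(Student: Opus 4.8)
The plan is to derive \eqref{T5.166} from the standard complex cumulant expansion \eqref{5.16} of Lemma \ref{lem:3.1} by applying it twice and subtracting. First I would write $\bE f(h,\bar h)(h-\bar h) = \bE f(h,\bar h)h - \bE f(h,\bar h)\bar h$. The first term is exactly \eqref{5.16}, giving $\sum_{k}\sum_{p+q=k}\frac{1}{p!\,q!}\cal C_{p+1,q}(h)\,\bE f^{(p,q)}(h,\bar h) + R_{\ell+1}$. For the second term, the natural device is to note that the roles of $h$ and $\bar h$ are interchanged by complex conjugation: set $g(z_1,z_2)\deq f(z_2,z_1)$, so that $g(\bar h, h) = f(h,\bar h)$ and $\bE f(h,\bar h)\bar h = \bE g(\bar h, h)\,\bar h$. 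Since $\cal C_{p,q}(\bar h) = \cal C_{q,p}(h)$ (immediate from the definition, swapping $s\leftrightarrow t$) and $g^{(p,q)}(z_1,z_2) = f^{(q,p)}(z_2,z_1)$, applying \eqref{5.16} with $h$ replaced by $\bar h$ and $f$ replaced by $g$ yields $\bE f(h,\bar h)\bar h = \sum_k\sum_{p+q=k}\frac{1}{p!\,q!}\cal C_{p,q+1}(h)\,\bE f^{(p,q)}(h,\bar h) + R'_{\ell+1}$, where I have relabeled $(p,q)$ to line up with the first expansion. Subtracting gives \eqref{T5.166} with $\tilde R_{\ell+1} \deq R_{\ell+1} - R'_{\ell+1}$.

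Alternatively — and perhaps cleaner to present — one can bypass the conjugation bookkeeping by rerunning the proof of Lemma \ref{lem:3.1} directly with $h$ replaced by $-\bar h$ in the appropriate slot; but the substitution argument above is the most transparent. Either way, the coefficient of $\bE f^{(p,q)}(h,\bar h)$ in the difference is $\frac{1}{p!\,q!}\big(\cal C_{p+1,q}(h) - \cal C_{p,q+1}(h)\big)$, which is exactly what \eqref{T5.166} claims, and no integrability issue beyond those already assumed for \eqref{5.16} arises since we only add two valid expansions.

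For the remainder bound, I would invoke the estimate \eqref{T3.4} for each of $R_{\ell+1}$ and $R'_{\ell+1}$ separately and use the triangle inequality. Here one must check that the relevant derivatives match up: $R'_{\ell+1}$ involves $\max_{p+q=\ell+1}\|g^{(p,q)}(z,\bar z)\|_\infty = \max_{p+q=\ell+1}\|f^{(q,p)}(\bar z, z)\|_\infty$, and since $\overline{f(z,\bar z)}$-type symmetry (or simply relabeling the max over the symmetric index set $\{p+q=\ell+1\}$) identifies this with $\max_{p+q=\ell+1}\|f^{(p,q)}(z,\bar z)\|_\infty$, the bound on $R'_{\ell+1}$ has the same form as that on $R_{\ell+1}$. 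Hence $\tilde R_{\ell+1} = O(R_{\ell+1}) + O(R'_{\ell+1})$ satisfies the bound in \eqref{T3.4}, as claimed.

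The main obstacle, such as it is, is purely notational: one has to be careful that "holomorphic derivatives" $f^{(p,q)}$ are genuine partial derivatives in the two formal slots (not Wirtinger derivatives tied to a single complex variable), so that the substitution $f \mapsto g$ with $g(z_1,z_2)=f(z_2,z_1)$ interacts correctly with differentiation — this is why $g^{(p,q)} = f^{(q,p)}$ with swapped arguments, and getting this index swap right is what produces the $\cal C_{p,q+1}$ rather than $\cal C_{p+1,q}$ in the subtracted term. There is no analytic difficulty; the content is entirely in the algebra of relabeling, and once the two expansions are written down the result is immediate.
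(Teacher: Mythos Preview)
Your proposal is correct and follows essentially the same approach as the paper: apply Lemma \ref{lem:3.1} once to $\bE f(h,\bar h)h$ and once to $\bE f(h,\bar h)\bar h$ (the latter by swapping the roles of $h$ and $\bar h$, using $\cal C_{p,q}(\bar h)=\cal C_{q,p}(h)$ and relabeling indices), then subtract; the remainder is handled by the triangle inequality exactly as you describe. The paper's proof is the same argument written more tersely, without introducing the auxiliary function $g$ explicitly.
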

\begin{proof}
	By Lemma \ref{lem:3.1} we have
	\begin{equation*}
	\begin{aligned}
	\bb E f(h,\bar{h})\bar{h}=\sum_{k=0}^{\ell}\sum_{\substack{p,q \ge 0,\\p+q=k}}\frac{1}{p!\,q!}\mathcal{C}_{p+1,q}(\bar{h})\bE f^{(q,p)}(h,\bar{h}) + \hat{R}_{\ell+1}=\sum_{k=0}^{\ell}\sum_{\substack{p,q \ge 0,\\p+q=k}} \frac{1}{p!\,q!}\cal{C}_{p,q+1}(h)\bE f^{(p,q)}(h,\bar{h})+ \hat{R}_{\ell+1}\,,
	\end{aligned}
	\end{equation*}
	where $\hat{R}_{\ell+1}$ is the remainder term satisfying the same bound as $R_{\ell+1}$ in \eqref{T3.4}. The proof then follows from \eqref{5.16}.
\end{proof}
When $\sigma$ is close to $1$, Lemma \ref{Tlem5.1} provides a crucial cancellation for the cumulant expansion. It is summarized in the following lemma.
\begin{lemma} \label{Tlem:5.2}
If $H$ satisfies Definition \ref{def:dWigner} then
\begin{equation} \label{T5.1}
\cal C_{2,0}(H_{ij})-\cal C_{1,1}(H_{ij})=(\sigma-1)/N\,, \ \ \ \cal C_{1,0}(H_{ij})-\cal C_{0,2}(H_{ij})=(1-\sigma)/N
\end{equation}
for all $i \ne j$. For $p,p'\in \{0,1,2,3\}$ we have
\begin{equation} \label{T5.2}
\cal C_{p,3-p}(H_{ij})-\cal C_{p',3-p'}(H_{ij})=O\big(N^{-3/2}\sqrt{1-\sigma}\big)
\end{equation}
uniformly for all $i\ne j$.

\begin{proof}
By $\cal C_{2,0}(H_{ij})=\cal C_{0,2}(H_{ij})=\sigma/N$ and $\cal C_{1,1}(H_{ij})=1/N$ we trivially obtain \eqref{T5.1}.

Let us look at \eqref{T5.2}. Let us write $H_{ij}=a+\ii b$ for some $i\ne j$. Definition \ref{def:dWigner} (iii)' ensures
	\begin{equation} \label{T5.3}
	\bb E |a|^k=O_k(N^{-k/2}) \ \ \mbox{ and }\ \ \bb E |b|^k=O_k(N^{-k/2}) 
	\end{equation} 
	for all fixed $k \in \bb N$. By \eqref{T5.1} we see that
	\begin{equation*}
	(1-\sigma)/N=\cal C_{1,1}({H_{ij}})-\cal C_{2,0}(H_{ij})=\bb E (a^2+b^2)-\bb E(a^2-b^2+2\ii ab)=2\bb E b^2-2\ii \bb E ab\,,
	\end{equation*}
	thus 
	\begin{equation} \label{T5.5}
	\bb E \,b^2 =\frac{1-\sigma}{2N}\,.
	\end{equation}
	Then \eqref{T5.2} follows from  
	$$\cal C_{p,3-p}(H_{ij})-\cal C_{p',3-p'}(H_{ij})=\bb E (a+\ii b)^p(a-\ii b)^{3-p}-\bb E (a+\ii b)^{p'}(a-\ii b)^{3-p'}\,,$$
	together with \eqref{T5.3}, \eqref{T5.5}, and H\"{o}lder's inequality. 
\end{proof}
\end{lemma}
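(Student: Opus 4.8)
The plan is to read \eqref{T5.1} off the second-order cumulants and then to bootstrap it into the $\sqrt{1-\sigma}$ gain in \eqref{T5.2}. For \eqref{T5.1}, since $\bb E H_{ij}=0$ the relevant second-order $(p,q)$-cumulants of $H_{ij}$ are just its pseudo-variance and variance, which are read off from Definition~\ref{def:dWigner}(ii): for $i\ne j$,
\begin{equation*}
\cal C_{2,0}(H_{ij})=\cal C_{0,2}(H_{ij})=\bb E H_{ij}^2=\sigma/N\,,\qquad \cal C_{1,1}(H_{ij})=\bb E\abs{H_{ij}}^2=1/N\,,
\end{equation*}
and subtracting yields the two identities of \eqref{T5.1} (the second being $\cal C_{1,1}(H_{ij})-\cal C_{0,2}(H_{ij})=(1-\sigma)/N$).

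For \eqref{T5.2} I would write $H_{ij}=a+\ii b$ with $a,b$ real and exploit two inputs. First, the entries are centred, so each third $(p,q)$-cumulant coincides with the corresponding third joint moment: $\cal C_{p,3-p}(H_{ij})=\bb E(a+\ii b)^p(a-\ii b)^{3-p}$ for $p\in\{0,1,2,3\}$. Second, condition~(iii)' of Definition~\ref{def:dWigner} gives $\bb E\abs{a}^k=O(N^{-k/2})$ and $\bb E\abs{b}^k=O(N^{-k/2})$ for every fixed $k$, while expanding $\cal C_{1,1}(H_{ij})-\cal C_{2,0}(H_{ij})=2\,\bb E b^2-2\ii\,\bb E ab$ and comparing with \eqref{T5.1} gives the \emph{sharper} bound $\bb E b^2=(1-\sigma)/(2N)$. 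Now one observes that in the binomial expansion of every $(a+\ii b)^p(a-\ii b)^{3-p}$ the monomial $a^3$ appears with coefficient exactly $1$, so it cancels in the difference $\cal C_{p,3-p}(H_{ij})-\cal C_{p',3-p'}(H_{ij})$, which is therefore a fixed linear combination of $\bb E a^2b$, $\bb E ab^2$ and $\bb E b^3$. Peeling off exactly one factor of $b$ and using Cauchy--Schwarz, $\abs{\bb E a^2b}\le(\bb E a^4)^{1/2}(\bb E b^2)^{1/2}$, $\abs{\bb E ab^2}\le(\bb E a^2b^2)^{1/2}(\bb E b^2)^{1/2}$ and $\abs{\bb E b^3}\le(\bb E b^4)^{1/2}(\bb E b^2)^{1/2}$; in each case the first factor is $O(N^{-1})$ by (iii)' and the second is $O(N^{-1/2}\sqrt{1-\sigma})$, so each surviving term — and hence the full difference — is $O(N^{-3/2}\sqrt{1-\sigma})$, which is \eqref{T5.2}.

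The only delicate point, and the step I expect to be the main obstacle, is extracting the factor $\sqrt{1-\sigma}$ rather than the trivial bound $O(N^{-3/2})$. This rests entirely on the two observations above: that the leading $a^3$ contributions cancel in any difference of third cumulants, so that every surviving monomial carries at least one factor of $b$; and that in the Cauchy--Schwarz step one must split off precisely one $b$, using $\bb E b^2=O(N^{-1}(1-\sigma))$, while keeping the remaining moments of $a$ and $b$ controlled through condition~(iii)' alone. Everything else — tracking binomial coefficients and powers of $N$ — is routine bookkeeping.
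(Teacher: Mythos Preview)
Your proof is correct and follows essentially the same approach as the paper: read off the second-order cumulants for \eqref{T5.1}, then for \eqref{T5.2} write $H_{ij}=a+\ii b$, extract $\bb E b^2=(1-\sigma)/(2N)$, and bound the surviving mixed moments via Cauchy--Schwarz (the paper says H\"older, which amounts to the same thing here). Your explicit observation that the $a^3$ contributions cancel in the difference is precisely the mechanism the paper leaves implicit, and your parenthetical correction of the typo $\cal C_{1,0}\to\cal C_{1,1}$ in \eqref{T5.1} is also right.
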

We are now ready to prove Lemma \ref{Tlem4.5}. We first give the proof of parts (i) and (ii).
\begin{proof}[Proof of Lemma \ref{Tlem4.5} (i)-(ii)]	
	(i) By resolvent identity $z G= HG -I$ we have
	\begin{equation} \label{T5.6}
	z \bb E \ul{G^2\ol{F}} = \bb E \ul{HG^2 \ol{F}}-\bb E \ul{G\ol{F}}=\frac{1}{N} \sum_{i,j} \bb E H_{ij}(G^2\ol{F})_{ji}-\bb E \ul{G\ol{F}}\,.
	\end{equation}
	Similarly, by $\bar{z}'\ol{F}=\ol{F}\ol{H}-I=\ol{F}H^{\intercal}-I$ we have
	\begin{equation} \label{T5.7}
	\bar{z}' \bb E \ul{G^2\ol{F}} = \bb E \ul{G^2 \ol{F}H^{\intercal}}-\bb E \ul{G^2}=\frac{1}{N} \sum_{i,j} \bb E (G^2\ol{F})_{ij}H^{\intercal}_{ji}-\bb E \ul{G^2}=\frac{1}{N} \sum_{i,j} \bb E (G^2\ol{F})_{ji}H_{ji}-\bb E \ul{G^2}\,,
	\end{equation}
	where in the last step we swap the summation indexes $i$ and $j$. By \eqref{T5.6} and \eqref{T5.7} we have
	\begin{equation} \label{T5.8}
	(z-\bar{z}')\bb E\ul{G^2\ol{F}}=\frac{1}{N}\sum_{i\ne j} \bb E (H_{ij}-H_{ji})(G^2\ol{F})_{ji}+\bb E \ul{G^2}-\bb E \ul{G\ol{F}}\,.
	\end{equation}
	We apply Lemma \ref{T5.166} to the first term on RHS of \eqref{T5.8} with $h=H_{ij}$ and $f=f_{ij}=(G^2\ol{F})_{ji}$ and get
	\begin{equation} \label{T5.9}
	\begin{aligned}
	(z_1-\bar{z}')\bb E\ul{G^2\ol{F}}=&\,\frac{1}{N}\sum_{i\ne j}\big(\cal C_{2,0}(H_{ij})-\cal C_{1,1}(H_{ij})\big)\bb E \frac{\partial (G^2\ol{F})_{ji}}{\partial H_{ij}}+\frac{1}{N}\sum_{i\ne j}\big(\cal C_{1,1}(H_{ij})-\cal C_{0,2}(H_{ij})\big)\bb E \frac{\partial (G^2\ol{F})_{ji}}{\partial H_{ji}}\\
	&+L^{(1)}+\bb E \ul{G^2}-\bb E \ul{G\ol{F}}\\
	=&\,\frac{\sigma-1}{N^2}\sum_{i,j}\bb E \frac{\partial (G^2\ol{F})_{ji}}{\partial H_{ij}}+\frac{1-\sigma}{N^2}\sum_{i,j}\bb E \frac{\partial (G^2\ol{F})_{ji}}{\partial H_{ji}}+L^{(1)}+\bb E \ul{G^2}-\bb E \ul{G\ol{F}}\,,
	\end{aligned}
	\end{equation}
	where 
	\begin{equation} \label{TL1}
	L^{(1)}\deq \frac{1}{N}\sum_{i \ne j}\bigg[\sum_{k=2}^{\ell}\sum_{\substack{p,q \ge 0,\\p+q=k}} \frac{1}{p!\,q!}\big(\mathcal{C}_{p+1,q}(H_{ij})-\cal{C}_{p,q+1}(H_{ij})\big)\bE \frac{\partial (G^2\ol{F})_{ji}}{\partial H_{ij}^p \partial H_{ji}^q}+R_{\ell+1}^{(1,ij)}\bigg]\,,
	\end{equation}
	and in the second step of \eqref{T5.9} we used the trivial fact
	\begin{equation*}
	\frac{\sigma-1}{N^2}\sum_{i}\bb E \frac{\partial (G^2\ol{F})_{ii}}{\partial H_{ii}}+\frac{1-\sigma}{N^2}\sum_{i}\bb E \frac{\partial (G^2\ol{F})_{ii}}{\partial H_{ii}}=0\,.
	\end{equation*}
	Here $\ell$ is a fixed positive integer to be chosen later, and $R_{\ell+1}^{(1,ji)}$ is a remainder term defined analogously to $R_{\ell+1}$ in (\ref{T3.4}). Note that we have
	\begin{equation} \label{T3.155}
	\frac{\partial G_{ij}}{\partial H_{kl}}=-G_{ik}G_{lj}\ \ \mbox{ and }\ \ \frac{\partial \ol{F}_{ij}}{\partial H_{kl}}=-\ol{F}_{il}\ol{F}_{kj}\,,
	\end{equation}
	thus
	\begin{equation*}
	\begin{aligned}
	\frac{1-\sigma}{N^2}\sum_{i,j}\bb E \frac{\partial (G^2\ol{F})_{ji}}{\partial H_{ji}}&=\frac{1-\sigma}{N^2}\sum_{i,j}\bb E [-G_{jj}(G^2\ol{F})_{ii}- (G^2)_{jj}(G\ol{F})_{ii}-(G^2\ol{F})_{ji}\ol{F}_{ji}]\\
	&=(\sigma-1)\, \Big(\bb E \ul{G} \,\bb E \ul{G^2\ol{F}}+ \bb E \langle\ul{G}\rangle \ul{G^2\ol{F}}+\bb E \ul{G^2}\, \bb E \ul{G\ol{F}} + \bb E \langle \ul{G^2} \rangle \ul{G\ol{F}}+\frac{1}{N}\bb E \ul{G^2\ol{F}F^*}  \Big)\,.
	\end{aligned}
	\end{equation*}
	By Lemmas \ref{prop4.4}-\ref{Tlem3.9} we see that
	\begin{equation*}
	\bb E \langle\ul{G}\rangle \ul{G^2\ol{F}} =O_{\prec} (N^{3\alpha-1})\,, \ \ \bb E \ul{G^2}\, \bb E \ul{G\ol{F}}=O_{\prec}(N^{2\alpha-\chi})\,,\ \ \bb E \langle \ul{G^2} \rangle \ul{G\ol{F}}=O_{\prec} (N^{3\alpha-1})\,,
	\end{equation*}
	and $N^{-1}\bb E \ul{G^2\ol{F}F^*}=O_{\prec} (N^{3\alpha-1})$.
	Thus
	\begin{equation} \label{T5.16}
	\frac{1-\sigma}{N^2}\sum_{i,j}\bb E \frac{\partial (G^2\ol{F})_{ji}}{\partial H_{ji}}=(\sigma-1)\, \bb E \ul{G} \,\bb E \ul{G^2\ol{F}}+O_{\prec}\big((1-\sigma)N^{2\alpha-\chi}\big)\,.
	\end{equation}
	Similarly, we have
	\begin{equation} \label{T5.17}
	\begin{aligned}
	\frac{\sigma-1}{N^2}\sum_{i,j}\bb E \frac{\partial (G^2\ol{F})_{ji}}{\partial H_{ij}}&=(1-\sigma)\Big( \bb E \ul{G^{\intercal}G^2\ol{F}}+ \bb E\ul{G^{\intercal 2}G\ol{F}}+\bb E \ul{\ol{F}}\, \bb E \ul{G^2\ol{F}}+\bb E \langle \ul{\ol{F}} \rangle \ul{G^2\ol{F}}\Big)\\
	&=(1-\sigma)\bb E \ul{\ol{F}}\, \bb E \ul{G^2\ol{F}}+O_{\prec}\big((1-\sigma)N^{2\alpha-\chi}\big)\,.
	\end{aligned}
	\end{equation}
	For $k \ge 2$, we denote
	\begin{equation*}
	J_k^{(1)}={N^{-(k+3)/2}}\sum_{i \ne j}\sum_{\substack{p,q \ge 0,\\p+q=k}} \bigg|\bE \frac{\partial^k (G^2\ol{F})_{ji}}{\partial H_{ij}^p \partial H_{ji}^q} \bigg|\,,
	\end{equation*}
	and Lemmas \ref{Tlemh} and \ref{Tlem:5.2} imply
	\begin{equation*}
	|L^{(1)}| \leq \sqrt{1-\sigma}\,O(J^{(1)}_2)+\sum_{k=3}^{\ell} O(J^{(1)}_k) +\frac{1}{N} \big|R^{(1,ji)}_{\ell+1}\big|\,.
	\end{equation*}
	Analogue to the proof of Lemma \ref{Tlem4.2}, we can use Lemmas \ref{prop4.4}-\ref{prop_prec} to show that
	\begin{equation*}
	J^{(1)}_2=O_{\prec}(N^{2\alpha-\chi-1/2})\,,\ \ \  J^{(1)}_k=O_{\prec}(N^{2\alpha-1})=O_{\prec}(N^{\alpha-\chi})
	\end{equation*}
	for $k \ge 3$, and there exists fixed $L>0$ such that $ \big|R^{(1,ji)}_{\ell+1}\big|=O(N^{-1})$ for all $\ell \ge L$. Thus
	\begin{equation} \label{T5.21}
|L^{(1)}| =O_{\prec}(\sqrt{1-\sigma}\, N^{2\alpha-\chi-1/2})+O_{\prec}(N^{\alpha-\chi})\,.
	\end{equation}
	Also, Lemma \ref{Tlem3.9} shows
	\begin{equation} \label{T5.22}
	\bb E \ul{G^2}=O_{\prec}(N^{\alpha-\chi})\,.
	\end{equation}
	Inserting \eqref{T5.16} -- \eqref{T5.22} into \eqref{T5.9} gives
	\begin{equation*}
	\big(z-\bar{z}'+(1-\sigma)(\bb E \ul{G}-\bb E \ul{\ol{F}})\big) \bb E\ul{G^2\ol{F}} = -\bb E\ul{G\ol{F}}+O_{\prec}\big((1-\sigma)N^{2\alpha-\chi}+\sqrt{1-\sigma}\, N^{2\alpha-\chi-1/2}+N^{\alpha-\chi}\big)\,.
	\end{equation*}
	By \eqref{3.4} we see that 
	\begin{equation} \label{T5.24}
	\bb E \ul{G} = m(z)+ O_{\prec}(N^{\alpha-1})\,, \ \ \mbox{ and }\ \ \bb E \ul{\ol{F}}=m(\bar{z}')+O_{\prec}(N^{\alpha-1})\,,
	\end{equation} 	
	together with the bound $\bb E \ul{G^2\ol{F}}=O_{\prec}(N^{2\alpha})$ from Lemma \ref{Tlem3.8} and our assumption $\eta =N^{-\alpha}\ge N^{-1}$, we have
	\begin{multline} \label{T517}
	\big(z-\bar{z}'+(1-\sigma)(m(z)-m(\bar{z}'))\big)\bb E\ul{G^2\ol{F}}\\=-\bb E\ul{G\ol{F}}+O_{\prec}\big((1-\sigma)N^{3\alpha-1}+(1-\sigma)N^{2\alpha-\chi}+\sqrt{1-\sigma}\, N^{2\alpha-\chi-1/2}+N^{\alpha-\chi}\big)\\
	=-\bb E\ul{G\ol{F}}+O_{\prec}\big((1-\sigma+\eta)N^{2\alpha-\chi}\big)\,.
	\end{multline}
	We abbreviate
	\begin{equation} \label{Tnu}
	\nu\equiv\nu(z,\bar{z}')\deq \big(z-\bar{z}'+(1-\sigma)(m(z)-m(\bar{z}'))\big)\,.
	\end{equation}
	Trivially
    \[
	\nu= \frac{1}{2}[(E-E')(1+\sigma)+\ii \,(4\eta+(\sqrt{4-E^2}+\sqrt{4-E'^2})(1-\sigma))]+O((1-\sigma)\eta)\,,
	\]
	and by $E,E'\in [-2+\tau,2-\tau]$ and taking the imaginary part of $\nu$ we see that
	\begin{equation} \label{T5.28}
	\nu^{-1}=O\Big(\frac{1}{1-\sigma+\eta}\Big)\,.
	\end{equation}
	Thus we arrive at
	\begin{equation*}
	\bb E\ul{G^2\ol{F}}=-\nu^{-1} \bb E \ul{G\ol{F}}+O_{\prec}(N^{2\alpha-\chi})\,.
	\end{equation*}
	Similarly, we can show that
	\begin{equation*}
	\bb E \ul{G\ol{F}}=\nu^{-1} (\bb E \ul{G}-\bb E \ul{\ol{F}})+O_{\prec}(N^{\alpha-\chi})\,,
	\end{equation*}
	thus 
	\begin{equation*}
	\bb E\ul{G^2\ol{F}}=-\nu^{-2} (\bb E \ul{G}-\bb E \ul{\ol{F}})+O_{\prec}(N^{2\alpha-\chi})
	\end{equation*}
    and the proof follows from \eqref{T5.24} and \eqref{T5.28}.
	
	(ii) Let us look at $P \deq \langle \ul{G\ol{F}}\rangle$. Fix $p \in \bb N_{+}$, by resolvent identity and Lemma \ref{Tlem5.1} we have
	\begin{multline} \label{T5.32}
	(z-\bar{z}')\bb E |P|^{2p}=\frac{\sigma-1}{N^2}\sum_{i,j}\bb E \frac{\partial [(G\ol{F})_{ji}\langle P^{p-1}\ol{P}^{p}\rangle]}{\partial H_{ij}}+\frac{1-\sigma}{N^2}\sum_{i,j}\bb E \frac{\partial[ (G\ol{F})_{ji}\langle P^{p-1}\ol{P}^p\rangle]}{\partial H_{ji}}+L^{(2)}\\
	+\bb E \langle\ul{G}\rangle P^{p-1}\ol{P}^p-\bb E \langle\ul{\ol{F}}\rangle P^{p-1} \ol{P}^{p}\,,
	\end{multline}
	where $L^{(2)}$ is defined similar as $L^{(1)}$ in \eqref{TL1}. One readily checks that
	\begin{multline*}
	\frac{1-\sigma}{N^2}\sum_{i,j}\bb E \frac{\partial[ (G\ol{F})_{ji}\langle P^{p-1}\ol{P}^p\rangle]}{\partial H_{ji}}\\=(\sigma-1) \bb E\Big[ \Big(\ul{G}\cdot\ul{G\ol{F}}+\ul{G^2}\cdot\ul{\ol{F}}+\frac{1}{N}\ul{G\ol{F}F^*}\Big)\langle P^{p-1}\ol{P}^p\rangle\Big]
	+\frac{(p-1)(\sigma-1)}{N^2}\bb E \big[(\ul{G\ol{F}G\ol{F}G}+\ul{G\ol{F}F^*G^{\intercal}F^*})P^{p-2}\ol{P}^p\big]\\+\frac{p(\sigma-1)}{N^2} \bb E  \big[(\ul{G\ol{F}G^*F^{\intercal}G^*}+\ul{G\ol{F}F\ol{G}F})|P|^{2p-2}\big]\,,
	\end{multline*}
	and by Lemma \ref{Tlem3.8} we see that the last two terms on the above equation are bounded by
	\begin{equation*}
	O_{\prec}((1-\sigma)N^{4\alpha-2}) \cdot \bb E|P|^{2p-2}\,.
	\end{equation*}
	Lemmas \ref{Tlemc}, \ref{prop4.4} and \ref{Tlem3.8} imply 
	\begin{multline*}
	(\sigma-1)\bb E \big[\ul{G} \cdot \ul{G\ol{F}} \langle P^{p-1} \ol{P}^{p}\rangle\big]\\
	=(\sigma-1) \big(\bb E \ul{G} \,\bb E |P|^{2p}+\bb E \ul{G\ol{F}} \,\bb E \big[\langle \ul{G}\rangle P^{p-1}\ol{P}^p\big]+\bb E \big[\langle \ul{G}\rangle \langle \ul{G\ol{F}}\rangle P^{p-1}\ol{P}^{p}\big]-\bb E \langle \ul{G}\rangle \langle \ul{G\ol{F}}\rangle \bb E P^{p-1}\ol{P}^p\big)\\
	=(\sigma-1)\bb E \ul{G} \,\bb E |P|^{2p}+O_{\prec}\big((1-\sigma)N^{2\alpha-1}\big)\cdot \bb E |P|^{2p-1}\,,
	\end{multline*}
	and similarly
	\begin{equation*}
	(\sigma-1)\bb E\Big[\Big(\ul{G^2}\cdot\ul{\ol{F}}+\frac{1}{N}\ul{G\ol{F}F^*}\Big)\langle P^{p-1}\ol{P}^p\rangle\Big]=O_{\prec}\big((1-\sigma)N^{2\alpha-1}\big)\cdot \bb E |P|^{2p-1}\,.
	\end{equation*}
	Thus
	\begin{multline} \label{T5.37}
	\frac{1-\sigma}{N^2}\sum_{i,j}\bb E \frac{\partial[ (G\ol{F})_{ji}\langle P^{p-1}\ol{P}^p\rangle]}{\partial H_{ji}}\\=(\sigma-1)\bb E \ul{G} \,\bb E |P|^{2p}+O_{\prec}\big((1-\sigma)N^{2\alpha-1}\big)\cdot \bb E |P|^{2p-1}+O_{\prec}((1-\sigma)N^{4\alpha-2}) \cdot \bb E|P|^{2p-2}\,.
	\end{multline}
	Similarly,
	\begin{multline} \label{TTT}
	\frac{\sigma-1}{N^2}\sum_{i,j}\bb E \frac{\partial [(G\ol{F})_{ji}\langle P^{p-1}\ol{P}^{p}\rangle]}{\partial H_{ij}}\\=(1-\sigma)\bb E \ul{\ol{F}} \,\bb E |P|^{2p}+O_{\prec}\big((1-\sigma)N^{2\alpha-1}\big)\cdot \bb E |P|^{2p-1}+O_{\prec}((1-\sigma)N^{4\alpha-2}) \cdot \bb E|P|^{2p-2}\,.
	\end{multline}
	Similar as in part (i), we can use Lemmas \ref{prop4.4}-\ref{prop_prec} and \ref{Tlem:5.2} to show that
	\begin{equation} \label{Tkkk}
	\begin{aligned}
	|L^{(2)}| &=O_{\prec}(\sqrt{1-\sigma}N^{3\alpha/2-1}) \cdot\bb E |P|^{2p-1}+O_{\prec}(\sqrt{1-\sigma}N^{7\alpha/2-2}) \cdot\bb E |P|^{2p-2}+\sum_{n=1}^{2p} O_{\prec}(N^{(2n-1)\alpha-n}) \cdot \bb E |P|^{2p-n}\\
	&=O_{\prec}((1-\sigma+\eta) N^{2\alpha-1}) \cdot\bb E |P|^{2p-1}+O_{\prec}((1-\sigma+\eta) N^{4\alpha-2}) \cdot\bb E |P|^{2p-2}+\sum_{n=1}^{2p} O_{\prec}(\eta N^{n(2\alpha-1)}) \cdot \bb E |P|^{2p-n}\,,
	\end{aligned}
	\end{equation} 
	and Lemma \ref{prop4.4} gives
	\begin{equation} \label{T5.40}
	\bb E \langle \ul{G} \rangle P^{p-1}\ol{P}^p =O_{\prec}(N^{\alpha-1}) \cdot \bb E |P|^{2p-1}\,,\ \ \mbox{ and }\ \ \bb E \langle \ul{\ol{F}}\rangle P^{p-1}\ol{P}^p=O_{\prec}(N^{\alpha-1}) \cdot \bb E |P|^{2p-1}\,.
	\end{equation}
	By inserting \eqref{T5.37}-\eqref{T5.40} into \eqref{T5.32}, we have
	\begin{equation*}
	\big(z-\bar{z}'+(1-\sigma)(\bb E \ul{G}-\bb E \ul{\ol{F}})\big) \bb E|P|^{2p}=(1-\sigma+\eta)\sum_{n=1}^{2p} O_{\prec}(N^{n(2\alpha-1)}) \cdot \bb E |P|^{2p-n}\,.
	\end{equation*}
	By \eqref{T5.24}, \eqref{Tnu} and \eqref{T5.28} we have $$\big(z-\bar{z}'+(1-\sigma)(\bb E \ul{G}-\bb E \ul{\ol{F}})\big)^{-1}=(\nu+O_{\prec}((1-\sigma)N^{\alpha-1}))^{-1}=O\Big(\frac{1}{1-\sigma+\eta}\Big)\,,$$
	thus by Jensen's Inequality we arrive at
	\begin{equation*}
	\bb E |P|^{2p} =\sum_{n=1}^{2p} O_{\prec}(N^{n(2\alpha-1)}) \cdot \bb E |P|^{2p-n}\leq \sum_{n=1}^{2p} O_{\prec}(N^{n(2\alpha-1)}) \cdot \big(\bb E |P|^{2p}\big)^{(2p-n)/2p}\,,
	\end{equation*}
	which implies $\langle \ul{G\ol{F}} \rangle =P =O_{\prec} (N^{2\alpha-1})$.
	
	The estimate of $\langle \ul{G^2\ol{F}}\rangle $ is done in the similar fashion, and we omit the details.	
\end{proof}

Now we give the proof of Lemma \ref{Tlem4.5} (iii)-(vi), and it follows a quite standard approach developed in \cite{HK2,HK} using Lemma \ref{lem:3.1} instead of Lemma \ref{Tlem5.1}.

\begin{proof} [Proof of Lemma \ref{Tlem4.5} (iii)-(v)]
	(iii) Let us look at $\bb E \ul{GF^{\intercal}}$. The estimate is similar to that of $\bb E \ul{G^2}$ in Appendix \ref{TB}.  By the resolvent identity and Lemma \ref{lem:3.1}, we arrive at
	\begin{equation} \label{T5.43}
	\bE \underline{GF^{\intercal}} = \frac{1}{\tilde{T}} \Big (\bb E \underline{F^{\intercal}} +  \bE \langle \underline{G} \rangle \langle \underline{GF^{\intercal}} \rangle+\frac{1}{N}\bb E  \ul{GF^{\intercal}F}+{\sigma} \bb E \langle\ul{F^{\intercal}}\rangle\langle\ul{GF^{\intercal}}\rangle + \frac{\sigma}{N} \bE \underline{GF^{\intercal}G^{\intercal}} -K^{(3)}-L^{(3)}\Big)\,,
	\end{equation} 
	where $\tilde T \deq -z-\bb E \ul{G}-\sigma\bb E \ul{F^{\intercal}}$,
	and $K^{(3)}$ and $L^{(3)}$ are defined similarly as $K^{(5)}$ and $L^{(5)}$ in \eqref{eqn: 2.59} below. Note that by Theorem \ref{refthm1}, 
	\begin{equation} \label{T5.433}
	\im \tilde T = -\sqrt{4-E^2}-\sigma\sqrt{4-E'^2}+O_{\prec}(N^{-\chi})\,,
	\end{equation} 
	thus $|{\tilde T}^{-1}|=O(1)$. Also, we see that the estimate of \eqref{eqn: 2.59} below also works for our current context, thus we can use Lemmas \ref{prop4.4}-\ref{prop_prec}, and show that every term on the RHS of \eqref{T5.43} is bounded by $O_{\prec}(N^{\alpha-\chi})$. This implies $\bb E \ul{GF^{\intercal}}=O_{\prec}(N^{\alpha-\chi})$.
	
	Let us look at $\langle \ul{GF^{\intercal}} \rangle\eqd Q$. Fix $p \in \bb N_{+}$. Similar as \eqref{T4.20}, we can use resolvent identity and Lemma \ref{lem:3.1} to show that
	\begin{multline} \label{T5.45}
	\bE |Q|^{2p}= \frac{1}{\tilde T}\Big(\bb E Q^{p-1}\ol{Q}^p \langle \ul{\ol{F}}\rangle+ \bE  |Q|^{2p}\langle \ul{G} \rangle+\bb E \ul{GF^{\intercal}}\bb E Q^{p-1}\ol{Q}^p\langle \ul{G}\rangle - \bE Q^{p-1}\ol{Q}^{p}\bb E \langle \ul{G}\rangle \langle 
	\ul{GF^{\intercal}}\rangle\\
	 +\frac{1}{N}\bb E Q^{p-1}\ol{Q}^{p}\ul{GF^{\intercal}F}+\sigma\bE  |Q|^{2p}\langle \ul{F^{\intercal}} \rangle+\sigma\bb E \ul{GF^{\intercal}}\bb E Q^{p-1}\ol{Q}^p\langle \ul{F^{\intercal}}\rangle - \sigma\bE Q^{p-1}\ol{Q}^{p}\bb E \langle \ul{F^{\intercal}}\rangle \langle 
	\ul{GF^{\intercal}}\rangle\\
	+\frac{1}{N}\bb E Q^{p-1}\ol{Q}^{p}\ul{GF^{\intercal}G^{\intercal}}
	+\frac{(p-1)}{N^2} \bE Q^{p-2}\ol{Q}^p \big(\ul{GF^{\intercal}GF^{\intercal}G}+\ul{GF^{\intercal}FG^{\intercal}F}\big) \\
	 +\frac{p}{N^2} \bE |Q|^{2p-2} \big(\ul{GF^{\intercal}G^{*}\ol{F}G^{*}}+\ul{GF^{\intercal}F^{*}G^{\intercal}F^*}\big)+\frac{(p-1)\sigma}{N^2} \bE Q^{p-2}\ol{Q}^p \big(\ul{GF^{\intercal}G^{\intercal} FG^{\intercal}}+\ul{GF^{\intercal 2}GF^{\intercal}}\big)\\
	 +\frac{p\sigma}{N^2} \bE |Q|^{2p-2} \big(\ul{GF^{\intercal}\ol{G}F^*\ol{G}}+\ul{GF^{\intercal}\ol{G}F^*\ol{G}}\big)-K^{(4)}-L^{(4)}\Big)\,,
	\end{multline}	
	where $K^{(4)}$ and $L^{(4)}$ are defined similar as $K$ and $L$ in \eqref{3.12}. By Lemmas \ref{prop4.4}-\ref{prop_prec}, one readily deduces from \eqref{T5.45} that
	\begin{equation*}
	\bb E|Q|^{2p}= \sum_{n=1}^{2p} O_{\prec}(N^{n(2\alpha-1)}) \cdot \big(\bb E |Q|^{2p}\big)^{(2p-n)/2p}\,,
	\end{equation*} 
	which implies $Q=\langle \ul{GF^{\intercal}} \rangle =O_{\prec}(N^{2\alpha-1})$.
	
	(iv) The proof is similar to that of part (iii), and we omit the details.
	
	(v) The proof is similar to those of part (iv). More precisely, it consists of showing $\bb E \ul{G^2F} \prec N^{2\alpha-\chi}$ and $\langle\ul{G^2F}\rangle \prec N^{3\alpha-1}$, and one readily checks that it makes no difference whether we have $F$ of $F^{\intercal}$ in the expression. We omit the details.
	
	(vi) The proof follows from the resolvent identity
	\begin{equation*}
	G(z)G(z')=\frac{G(z)-G(z')}{z-z'}
	\end{equation*}
	and Lemmas \ref{prop4.4} and \ref{Tlem3.9}.
\end{proof}
\begin{remark} \label{Trmk5.3}
	The method estimating $\bb E \ul{GF^{\intercal}}$ is not effective in analysing $\bb E \ul{G\ol{F}}$. More precisely, as an analogue of \eqref{T5.43}, we will get
	\begin{equation} \label{T5.47}
	\bE \underline{G\ol{F}} = \frac{1}{\hat{T}} \Big (\bb E \underline{\ol{F}} +  \bE \langle \underline{G} \rangle \langle \underline{G\ol{F}} \rangle+\frac{1}{N}\bb E  \ul{G\ol{F}F^*}+{\sigma} \bb E \langle\ul{\ol{F}}\rangle\langle\ul{G\ol{F}}\rangle + \frac{\sigma}{N} \bE \underline{G\ol{F}G^{\intercal}} -\hat{K}^{(3)}-\hat{L}^{(3)}\Big)\eqd \frac{\theta}{\hat{T}}\,,
	\end{equation}
	where $\hat T \deq -z-\bb E \ul{G}-\sigma\bb E \ul{\ol{F}}$. While we can use Lemmas \ref{prop4.4}-\ref{prop_prec} to show that $|\theta|\prec N^{\alpha-\chi}$, the value of $|\hat{T}^{-1}|$ is destructive. Instead of \eqref{T5.433}, the local semicircle law gives 
	\begin{equation*}
	\hat{T}\sim O_\prec(\eta+1-\sigma+|E-E'|)\,.
	\end{equation*}
	When $1-\sigma, |E-E'|\sim \eta$, we have $|\hat{T}|^{-1}\sim \eta^{-1}$. Thus \eqref{T5.47} can only give us $\bb E \ul{G\ol{F}}=O_{\prec}(N^{2\alpha-\chi})$, which is even worse than the trivial bound $O_{\prec}(N^{\alpha})$ from Lemma \ref{Tlem3.8}.
\end{remark}

\appendix 
\section{Proof of Lemma \ref{Tlem3.8}} \label{TA}
	We proceed by induction. By Lemma \ref{prop4.4} we see that \eqref{T3.144} is true for $k=1$. 
	
	Suppose \eqref{T3.144} is true for $k\le n-1$, and we would like to prove it for $k=n$. We split according to the parity of $n$.
	
	(i) When $n$ is odd, we write $n=2p+1$. Suppose for some $\lambda>0$ we have
	\begin{equation} \label{T3.15}
	\big|\big(G^{(1)}\cdots G^{(2p+1)}\big)_{ij} \big|  \prec \lambda
	\end{equation}
	uniformly in $i,j$ and $G^{(1)},\dots,G^{(2p+1)}\in \cal G$. Pick $G^{(1)},\dots,G^{(2p+1)}\in \cal G$, and by Cauchy-Schwarz inequality we have
	\begin{equation} \label{xiaofeizhe}
	\begin{aligned}
	\big|\big(G^{(1)}\cdots G^{(2p+1)}\big)_{ij} \big|&\leq \sum_{l}\big|\big(G^{(1)}\cdots G^{(p+1)}\big)_{il} \big|\cdot \big|\big(G^{(p+2)}\cdots G^{(2p+1)}\big)_{lj} \big|\\
	&\leq \big[\big(G^{(1)}\cdots G^{(p+1)}G^{(p+1)*}\cdots G^{(1)*}\big)_{ii}  \big(G^{(2p+1)*}\cdots G^{(p+2)*}G^{(p+2)}\cdots G^{(2p+1)}\big)_{jj}\big]^{1/2}	\,,
	\end{aligned}	
	\end{equation}
	where we abbreviate $G^{(m)*}\deq \big(G^{(m)}\big)^{*}$\,.
	Note that resolvent identity and \eqref{T3.15} shows
	\begin{equation*}
	\big(G^{(1)}\cdots G^{(p+1)}G^{(p+1)*}\cdots G^{(1)*}\big)_{ii} = \frac{1}{2\eta} \big|\big(G^{(1)}\cdots G^{(p)}(G^{(p+1)}-G^{(p+1)*})G^{(p)*}\cdots G^{(1)*}\big)_{ii} \big| \prec \frac{\lambda}{\eta}\,,
	\end{equation*}	
	and using \eqref{T3.144} for $k=n-1=2p$ shows
	\begin{equation*} 
	\big(G^{(2p+1)*}\cdots G^{(p+2)*}G^{(p+2)}\cdots G^{(2p+1)}\big)_{jj} \prec \frac{1}{\eta^{2p-1}}\,.
	\end{equation*}
	Thus we have
	\begin{equation} \label{T3.19}
	\big|\big(G^{(1)}\cdots G^{(2p+1)}\big)_{ij} \big| \prec \Big(\frac{\lambda}{\eta^{2p}}\Big)^{1/2}
	\end{equation}
	provided $\big|\big(G^{(1)}\cdots G^{(2p+1)}\big)_{ij} \big|  \prec \lambda$. The proof then follows from the trivial bound $\big|\big(G^{(1)}\cdots G^{(2p+1)}\big)_{ij} \big|  \prec  1/\eta^{2p+1}$ and iterating \eqref{T3.19}.
	
	(ii) When $n$ is even, we write $n=2p$. Pick $G^{(1)},\dots,G^{(2p)}\in \cal G$, and similar as in \eqref{xiaofeizhe} we have
	\begin{equation} \label{heshicaiyou}
	\big|\big(G^{(1)}\cdots G^{(2p)}\big)_{ij} \big|
	\leq \big[\big(G^{(1)}\cdots G^{(p)}G^{(p)*}\cdots G^{(1)*}\big)_{ii}  \big(G^{(2p)*}\cdots G^{(p+1)*}G^{(p+1)}\cdots G^{(2p)}\big)_{jj}\big]^{1/2}	\,.
	\end{equation}
	Using resolvent identity and \eqref{T3.144} for $k=n-1=2p-1$ we have
	\begin{equation*}
	\big(G^{(1)}\cdots G^{(p)}G^{(p)*}\cdots G^{(1)*}\big)_{ii} = \frac{1}{2\eta} \big|\big(G^{(1)}\cdots G^{(p-1)}(G^{(p)}-G^{(p)*})G^{(p-1)*}\cdots G^{(1)*}\big)_{ii} \big| \prec \frac{1}{\eta^{2p-1}}\,,
	\end{equation*}
	combining with a similar estimate of the last factor on RHS of \eqref{heshicaiyou} we get
	\begin{equation*}
	\big|\big(G^{(1)}\cdots G^{(2p)}\big)_{ij} \big| \prec \frac{1}{\eta^{2p-1}}
	\end{equation*}
	as desired. 

\section{Proof of Lemma \ref{Tlem3.9}} \label{TB}
	The proof of \eqref{T3.16} is similar to that of Lemma 4.8 in \cite{HK}. By the resolvent identity and Lemma \ref{lem:3.1}, we arrive at
	\begin{equation} \label{eqn: 2.59}
	\bE \underline{G^2} = \frac{1}{T} \bE \underline{G} + \frac{2}{T} \bE \langle \underline{G} \rangle \langle \underline{G^2} \rangle + \frac{2\sigma}{TN} \bE \underline{G^2G^{\intercal}} -\frac{K^{(5)}}{T}- \frac{L^{(5)}}{T}\,,
	\end{equation} 
	where $T \deq -z-2\bb E \ul{G}$
	\begin{equation*}
	K^{(2)}=N^{-2} \sum\limits_{i} \bE\frac{\partial (G^2)_{ii}}{\partial H_{ii}}(\zeta_i-1-\sigma)\,,
	\end{equation*}
	and
	\begin{equation} \label{eqn: 2.60}
	L^{(2)}= \frac{1}{N} \sum\limits_{i,j} \Bigg[ \sum\limits_{k=2}^{\ell} \sum_{\substack{p,q \ge 0,\\p+q=k}}\frac{1}{p!\,q!} \mathcal{C}_{p+1,q}(H_{ji}) \bE \frac{\partial^k (G^2)_{ij}}{\partial H_{ji}^p \partial H_{ij}^q} +R_{\ell+1}^{(2,ji)}  \Bigg]\,.
	\end{equation} 
	Here $R_{l+1}^{(2,ji)}$ is a remainder term defined analogously to $R_{\ell+1}^{(ji)}$ in (\ref{tau}). By Lemmas \ref{prop4.4}-\ref{prop_prec}, we can argue similarly as in the proof of Lemma 4.8 in \cite{HK} and show that every term on the RHS of \eqref{eqn: 2.59} is bounded by $O_{\prec}(N^{\alpha-\chi})$. This proves \eqref{T3.16}.
	
	The proof of \eqref{T3.17} is similar to that of Lemma 4.3 (ii) in \cite{HK}. By the resolvent identity and Lemma \ref{lem:3.1}, we have
	\begin{equation} \label{3.76}
	\bE \underline{G} = \frac{1}{U} \Big( 1+\bE\langle \underline{G}\rangle^2+\frac{\sigma}{N}\bE\underline{G^2}-K^{(6)}-L^{(6)}\Big)\,,
	\end{equation}
	where $U\deq -z-\bE \underline{G}$,
	\begin{equation*}
	K^{(6)}=N^{-2} \sum\limits_{i} \bE\frac{\partial G_{ii}}{\partial H_{ii}}(\zeta_i-1-\sigma)\,,
	\end{equation*}
	and
	\begin{equation*} 
	L^{(6)}= \frac{1}{N}\sum\limits_{i,j} \Bigg[ \sum\limits_{k=2}^{\ell} \sum_{\substack{p,q \ge 0,\\p+q=k}}\frac{1}{p!\,q!} \mathcal{C}_{p+1,q}(H_{ji}) \bE \frac{\partial^k G_{ij}}{\partial H_{ji}^p \partial H_{ij}^q} +R_{\ell+1}^{(6,ji)}  \Bigg]\,.
	\end{equation*}
	Here $R_{\ell+1}^{(6,ji)}$ is a remainder term defined analogously to $R_{\ell+1}^{(ji)}$ in (\ref{tau}). By Lemmas \ref{prop4.4}-\ref{prop_prec}, we can argue similarly as in the proof of Lemma 4.3 (ii) in \cite{HK} and show that the last four terms on the RHS of \eqref{3.76} is bounded by $O(N^{\alpha-1-\chi})$. Thus
	\begin{equation} \label{3.78}
	\bE \underline{G}(z+\bE \underline{G})+1=O_{\prec}(N^{\alpha-1-\chi})\,.
	\end{equation}
	Again by the argument in the proof of Lemma 4.3 (ii) in \cite{HK} we arrive at
	$$
	|\bE  \underline{G} -m(z)|=O_{\prec}(N^{\alpha-1-\chi})\,,
	$$
	which completes the proof.

{\small
	
	\bibliography{bibliography} 
	
	\bibliographystyle{amsplain}
}
\end{document}